\newcommand{\define}{\textbf}
\renewcommand{\setminus}{\smallsetminus}
\renewcommand{\phi}{\varphi}
\renewcommand{\tilde}{\widetilde}
\renewcommand{\hat}{\widehat}
\renewcommand{\bar}{\overline}
\renewcommand{\wedge}{\bigwedge}
\newcommand{\C}{\mathbb{C}}
\newcommand{\Q}{\mathbb{Q}}
\newcommand{\R}{\mathbb{R}}
\newcommand{\N}{\mathbb{N}}
\newcommand{\Z}{\mathbb{Z}}
\renewcommand{\P}{\mathbb{P}}
\newcommand{\A}{\mathbb{A}}
\DeclareMathOperator{\Sym}{Sym}
\DeclareMathOperator{\codim}{codim}
\DeclareMathOperator{\Hom}{Hom}
\DeclareMathOperator{\Spec}{Spec}
\DeclareMathOperator{\orb}{orb}
\DeclareMathOperator{\Ind}{Ind}
\DeclareMathOperator{\Int}{Int}
\DeclareMathOperator{\Proj}{Proj}
\DeclareMathOperator{\st}{st}
\DeclareMathOperator{\GL}{GL}
\DeclareMathOperator{\aff}{aff}
\DeclareMathOperator{\conv}{conv}
\DeclareMathOperator{\Hilb}{Hilb}
\DeclareMathOperator{\SL}{SL}
\newtheorem{theorem}{Theorem}[section]
\newtheorem{lemma}[theorem]{Lemma}
\newtheorem{proposition}[theorem]{Proposition}
\newtheorem{corollary}[theorem]{Corollary}
\theoremstyle{definition}
\newtheorem{definition}[theorem]{Definition}
\newtheorem{remark}[theorem]{Remark}
\newtheorem{example}[theorem]{Example}
\newtheorem{question}[theorem]{Question}
\newtheorem{conjecture}[theorem]{Conjecture}
\newcommand{\excise}[1]{}
\begin{document}

\title{New mirror pairs of Calabi-Yau orbifolds}

\author{Alan Stapledon}
\address{Department of Mathematics\\University of British Columbia\\ BC, Canada V6T 1Z2}
\email{astapldn@math.ubc.ca}

\date{}
\thanks{}

\begin{abstract}
We prove a representation-theoretic version of Borisov-Batyrev mirror symmetry, and use it to construct infinitely many new pairs of orbifolds with mirror Hodge diamonds, with respect to the usual Hodge structure on singular complex cohomology. We conjecture that the corresponding orbifold Hodge diamonds are also mirror. When $X$ is the Fermat quintic in $\P^4$, and $\tilde{X}^*$ is a $\Sym_5$-equivariant, toric resolution of its mirror $X^*$, we deduce that for any subgroup $\Gamma$ of the alternating group $A_5$, the $\Gamma$-Hilbert schemes 
 $\Gamma$-$\Hilb(X)$ and $\Gamma$-$\Hilb(\tilde{X}^*)$ are smooth Calabi-Yau threefolds with (explicitly computed) mirror Hodge diamonds. 
\end{abstract}

\maketitle
\tableofcontents

\section{Introduction}

Based originally on the computations of physicists in \cite{CLSCalabi}, mirror symmetry predicts that $n$-dimensional, complex, Calabi-Yau manifolds occur in pairs $(V, W)$ with mirror Hodge diamonds. That is,
\[
h^{p,q}(V) = h^{n - p, q} (W) \textrm{  for } 0 \le p,q \le n. 
\]
The work of Batyrev and Dais \cite{BatStringy, BatNon, BDStrong} led to the more general conjecture that $n$-dimensional, possibly singular, complex, Calabi-Yau varieties occur in pairs $(V, W)$ with \emph{stringy invariants} satisfying  the relation
\[
E_{\st}(V;u,v) = (-u)^n E_{\st}(W;u^{-1},v).
\]
If $\tilde{V} \rightarrow V$ and $\tilde{W} \rightarrow W$ are crepant resolutions of $V$ and $W$ respectively, then this  says that 
\[
h^{p,q}(\tilde{V}) = h^{n - p, q} (\tilde{W}) \textrm{  for } 0 \le p,q \le n. 
\]
Mirror pairs of Calabi-Yau manifolds are constructed in the work of Borcea \cite{BorK3}, Voisin \cite{VoiMiroirs} and Batyrev, Ciocan-Fontanine, Kim and van Straten \cite{BCKvSConifold, BCKvSMirror}. Conjectural mirror pairs also appear in the work  of R{\o}dland\cite{RodPfaffian}, B\"ohm \cite{BoeMirror}  and Kanazawa \cite{KanPfaffian}. Apart from these examples, all known mirror pairs of Calabi-Yau varieties appear as a result of a general construction of Batyrev and Borisov \cite{BBMirror} of mirror pairs of complete intersections in Fano toric varieties. In the hypersurface case, their construction is as follows:  they observe that $d$-dimensional \emph{reflexive} lattice polytopes $P$ and $P^*$ naturally appear in pairs associated to dual lattices $M$ and $N$ respectively, and let $X$ and $X^*$ be hypersurfaces in the associated complex toric varieties that are non-degenerate with respect to $P$ and $P^*$ respectively in the sense of Khovanski{\u\i} \cite{KhoNewton}. 

Let $\Gamma$ be a finite group that acts linearly on $M$, and leaves $P$ invariant. Then one can show that this induces an action of $\Gamma$ on the toric varieties associated to $P$ and $P^*$ respectively. 
Assume that the hypersurfaces $X$ and $X^*$ are $\Gamma$-invariant. In Definition~\ref{d:esi}, we define the \emph{equivariant stringy invariant} $E_{\st, \Gamma}(Z;u,v)$ of  a complex, Gorenstein variety $Z$ with an action of $\Gamma$. The invariants $E_{\st, \Gamma}(X;u,v)$ and $E_{\st, \Gamma}(X^*;u,v)$ are polynomials in $u$ and $v$ with coefficients in the complex representation ring $R(\Gamma)$ of $\Gamma$ (see Corollary~\ref{c:reflexive}). In Theorem~\ref{t:mirror}, we prove the following representation-theoretic version of Batyrev-Borisov mirror symmetry that was conjectured  in \cite[Conjecture~9.1]{YoRepresentations}, 
\begin{equation}\label{e:mirroreq}
E_{\st, \Gamma}(X; u,v) = (-u)^{d - 1} \det(\rho) \cdot  E_{\st, \Gamma}(X^*; u^{-1},v),
\end{equation}
where $\det(\rho)$ denotes the determinant representation associated with the action of $\Gamma$ on $M$. 
In particular,  it follows that if there exist $\Gamma$-equivariant, crepant, toric resolutions 
$\tilde{X} \rightarrow X$ and $\tilde{X}^* \rightarrow X^*$, then we have an equality of representations
\[
H^{p,q}(\tilde{X}) = \det(\rho) \cdot H^{d - 1 - p,q}(\tilde{X}^*)  \in R(\Gamma) \: \textrm{  for  } \: 0 \le p,q \le d - 1. 
\]
The result above is deduced as a consequence of a general formula for the equivariant Hodge-Deligne polynomial of a non-degenerate hypersurface in a torus (Theorem~\ref{t:formula}), which may be considered the main result of the paper, together with a formula for the equivariant stringy invariant (Proposition~\ref{p:stringy}), and a simplification for 
 hypersurfaces that are non-degenerate with respect to a reflexive polytope (Corollary~\ref{c:reflexive}). 

We remark that in the simplest case, when $\Gamma$ is trivial, Batyrev and Borisov's proof of \eqref{e:mirroreq} relies on some deep results on intersection cohomology. As remarked by Borisov in \cite[Section~5]{BMString}:

\emph{`However, it was very difficult to compute the Hodge-Deligne numbers of an arbitrary non-degenerate affine hypersurface. This was a major technical problem in the proof of mirror symmetry of the stringy Hodge numbers for Calabi-Yau complete intersections in \cite{BBMirror}.'}

On the other hand, after developing some  combinatorial machinery in Section~\ref{s:cones} and Section~\ref{s:tilde}, and using the results of \cite{YoEquivariant} and \cite{YoRepresentations}, we are able to provide a purely combinatorial proof of our result. In the case when $\Gamma$ is trivial, the author has been informed that unpublished combinatorial proofs were independently given and subsequently lost by Borisov and Khovanski{\u\i}. 

As an immediate corollary, we have the following surprising result.  If $\Gamma \subseteq \SL(M)$, then the orbifolds $\tilde{X}/\Gamma$ and $\tilde{X}^*/\Gamma$, which are possibly singular but whose cohomology admits a pure Hodge structure, have mirror Hodge diamonds. That is, 
\[
h^{p,q}(\tilde{X}/\Gamma) = h^{d - 1 - p,q}(\tilde{X}^*/\Gamma)  \: \textrm{  for  } \: 0 \le p,q \le d - 1. 
\]
When $\Gamma$ acts freely on $\tilde{X}$ and $\tilde{X}^*$, this produces new mirror pairs of Calabi-Yau manifolds (see Section~\ref{s:central} for explicit examples).  On the other hand, when the orbifolds have singularities, 
observe that this is a strictly different statement to the usual mirror symmetry test. Also, observe that these orbifolds do not appear in Batyrev and Borisov's construction; the singularities of these orbifolds can be non-abelian, whereas all varieties in Batyrev and Borisov's construction have toroidal, and hence abelian, singularities.

Results of Yasuda in \cite{YasMotivic} imply that the coefficients of $E_{\st}(V;u,v)$ for an orbifold $V$ are  equal to the \emph{orbifold Hodge numbers} $h^{p,q}_{\orb}(V)$ of $V$, as introduced by Chen and Ruan in \cite{CRNew}. The above results suggest the following McKay-type correspondence; if $\Gamma \subseteq \SL(M)$, then the orbifolds  $\tilde{X}/\Gamma$ and $\tilde{X}^*/\Gamma$ should have mirror orbifold Hodge diamonds. That is,
\[
h^{p,q}_{\orb}(\tilde{X}/\Gamma) = h^{d - 1 - p,q}_{\orb}(\tilde{X}^*/\Gamma)  \: \textrm{  for  } \: 0 \le p,q \le d - 1. 
\]
In particular, if there exist crepant resolutions $Z \rightarrow \tilde{X}/\Gamma$ and $Z^* \rightarrow \tilde{X}^*/\Gamma$, then this would imply that $Z$ and $Z^*$ are $(d - 1)$-dimensional Calabi-Yau manifolds with mirror Hodge diamonds. We refer the reader to Conjecture~\ref{c:Fantechi} for a precise, and more general, statement. 

The quintic threefold $X = \{ x_0^5 + x_1^5 + x_2^5 + x_3^5  + x_4^5 = 0 \} \subseteq \P^4$
admits an action of the symmetry group $\Sym_5$ by permuting co-ordinates. 
 A $\Sym_5$-invariant mirror hypersurface $X^*$ in Batyrev and Borisov's construction is singular and admits a $\Sym_5$-equivariant, toric, crepant resolution $\tilde{X}^* \rightarrow X^*$.
 In fact, this was one of the first and most significant examples in mirror symmetry. That is,
 a remarkable, early application of mirror symmetry was a prediction by physicists in \cite{COGPPair} of the number of rational curves of a given degree on $X$. A mathematical explanation and interpretation was later given by Morrison in \cite{MorMirror}. 
 
 Our results state that for any subgroup $\Gamma$ of the group $A_5$ of even permutations, $X/\Gamma$ and 
 $\tilde{X}^*/\Gamma$ are orbifolds with mirror Hodge diamonds. In Section~\ref{s:quintic}, we explicitly verify Conjecture~\ref{c:Fantechi}, and deduce that $X/\Gamma$ and 
 $\tilde{X}^*/\Gamma$ have mirror orbifold  Hodge diamonds. Moreover, a theorem of Bridgeland, King and Reid \cite[Theorem~1.2]{BKRMcKay} implies that  crepant resolutions of $X/A_5$ and $\tilde{X}^*/A_5$ are given by the $\Gamma$-Hilbert schemes 
 $\Gamma$-$\Hilb(X)$ and $\Gamma$-$\Hilb(\tilde{X}^*)$ respectively, which parametrize $0$-dimensional subschemes $Z$ such that the induced representation of $\Gamma$ on $H^0(Z, \mathcal{O}_Z)$ is isomorphic to the regular representation $\C[\Gamma]$ of $\Gamma$.
We deduce that $\Gamma$-$\Hilb(X)$ and $\Gamma$-$\Hilb(\tilde{X}^*)$ are Calabi-Yau manifolds with mirror Hodge diamonds. For example, when $\Gamma = A_5$, the respective Hodge diamonds are 
\begin{center}
\begin{tabular}{ c    c     c   c  c  c  c                 c c  c                               c    c     c   c  c  c  c         }
 &  &  & $1$ & &                                             & &&      &        &  &  & $1$ & &   \\
  &  & $0$ & & $0$ &  &                                & &&           &  & $0$ & & $0$ &  &\\
   &  $0$  &  & $5$ & & $0$ &                    &  &&         &  $0$  &  & $15$ & & $0$ &\\
     $1$  &   & $15$  &  & $15$ &  & $1$      & &&        $1$  &   & $5$  &  & $5$ &  & $1$. \\
      &  $0$  &  & $5$ & & $0$ &              &&   &       &  $0$  &  & $15$ & & $0$ &  \\
        &  & $0$ & & $0$ &  &                         &&   &          &  & $0$ & & $0$ &  &\\
        &  &  & $1$ & &                               &          &   &&            &  &  & $1$ & & \\
\end{tabular}
 \end{center}

Finally, we expect the results of Sections \ref{s:cones}, \ref{s:tilde}, \ref{s:formula} and 
\ref{s:stringy} to have independent interest outside of applications to mirror symmetry (see, for example, Remark~\ref{r:geometric}, Example~\ref{e:cc}, Remark~\ref{r:toric} and Remark~\ref{r:motivic}). We also expect that similar results hold for complete intersections, rather than hypersurfaces, although we do not pursue this here. 

\medskip
\noindent
{\it Notation and conventions.}  
All varieties are over the complex numbers, and all cohomology will be taken 
 with complex coefficients, with respect to the usual (complex) topology. All group actions will be left group actions. If $\Gamma$ is a finite group, then $R(\Gamma)$ denotes the complex representation ring of $\Gamma$. We will often identify a virtual representation $\chi$ in $R(\Gamma)$ with its associated virtual character, and write $\chi(\gamma)$ for its evaluation at $\gamma$ in $\Gamma$.  If $M$ is a lattice, then we write $M_\R  := M \otimes_\Z \R$.

\medskip
\noindent
{\it Acknowledgements.}  
In the case when $\Gamma$ is trivial, a significant amount of the work in Section~\ref{s:formula} was completed while the author was visiting the IAS in 2006, where he benefited enormously  from the help and encouragement of Mircea Musta\c t\v a and Lev Borisov. The author would also like to thank Igor Dolgachev for pointing him to known references of mirror Calabi-Yau's, and would like to thank 
Askold Khovanski{\u\i} for useful conversations.


\section{Representations and cones}\label{s:cones}

The goal of this section is to introduce and study representation-theoretic analogues of the $h$-polynomial and $g$-polynomial of a polyhedral cone. 

We will use the following setup throughout this section. Let $\Gamma$ be a finite group, and let 
$\rho: \Gamma \rightarrow \GL_{d + 1}(\R)$ be a real representation. Let $C \subseteq \R^{d + 1}$ be a $(d + 1)$-dimensional, pointed, polyhedral, $\Gamma$-invariant cone. For each face $F$ of $C$, let $\Gamma_F$ denote the stabilizer of $F$ with complex representation ring $R(\Gamma_F)$, and let $\rho_F: \Gamma_F \rightarrow \GL_{\dim F}(\R)$ denote the representation of $\Gamma_F$ on the linear span of $F$.  Let $\det (\rho_F) : \Gamma_F \rightarrow \{ \pm 1 \} \subseteq \R$ be the corresponding determinant representation. 
When $F = \{ 0 \}$, we set $\rho_F$ to be the trivial representation of $\Gamma$. Fix $\gamma \in \Gamma$, and let $B_\gamma$ denote the poset of $\gamma$-invariant faces of $C$.

If $B$ is a finite poset then
the M\"obius function $\mu_{B}: B \times B \rightarrow \mathbb{Z}$ is defined recursively as follows (see, for example, \cite[Section~3.7]{StaEnumerative}),
\[
\mu_{B}(x,y) =  \left\{ \begin{array}{ll}
1 & \textrm{  if  } x = y \\
0 & \textrm{  if  } x > y \\
- \sum_{x < z \leq y} \mu_{B}(z,y) = - \sum_{x \leq z < y} \mu_{B}(x,z) & \textrm{  if  } x < y 
\end{array} \right.
,
\]
and satisfies the property that for any function $h: B \rightarrow A$ to an abelian group $A$, 
\begin{equation}\label{inversion}
h(x) = \sum_{x \leq y} \mu_{B}(x,y) g(y), \textrm{ where } g(y) = \sum_{y \leq z} h(z).
\end{equation}
For any pair $z \leq x$ in $B$, we can consider the interval $[z,x] = \{ y \in B \mid z \leq y \leq x \}$. 
Suppose that $B$ has a minimal element $0$ and a maximal element $1$, and 
that every maximal chain in $B$ has the same length. The \emph{rank} $r(x)$ of an element $x$ in $B$ is equal to the length of a maximal chain in $[0,x]$, and the rank of $B$ is $r(1)$. In this case, we say that $B$ is \emph{Eulerian} if $\mu_{B}(x,y) = (-1)^{r(x) - r(y)}$ for $x \leq y$. 


\begin{example}\cite{ZieLectures}\label{polytope}
The poset of faces of a pointed, polyhedral cone $F$ is an Eulerian poset under inclusion with rank function 
$r(F) = \dim F$.
\end{example}

The following lemma will be key in proving the results of this section. 

\begin{lemma}\label{l:mobius}
For fixed $\gamma \in \Gamma$, the poset $B_\gamma$ of $\gamma$-invariant faces of $C$ is an Eulerian poset, with M\"obius function $\mu_\gamma$ given by
\[
\mu_{\gamma}(F, F') = (-1)^{\dim F' - \dim F} \det \rho_F (\gamma) \det \rho_{F'} (\gamma),
\]
where $F \subseteq F'$ are $\gamma$-invariant faces of $C$.  
\end{lemma}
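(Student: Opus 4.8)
My first step would be to understand the structure of $B_\gamma$. The full face poset of $C$ is Eulerian (Example~\ref{polytope}). Taking $\gamma$-invariant faces is a restriction, but I'd want to argue that $B_\gamma$ is itself the face poset of a polyhedral cone — namely, the cone $C^\gamma = C \cap (\R^{d+1})^\gamma$, the fixed subspace of $\gamma$. Indeed, the faces of $C^\gamma$ are exactly the intersections $F \cap (\R^{d+1})^\gamma$ for faces $F$ of $C$, and a face $F$ is $\gamma$-invariant iff it is "$\gamma$-relevant" in the sense that... hmm, actually this needs care: a $\gamma$-invariant face $F$ need not equal $F\cap(\R^{d+1})^\gamma$. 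Let me think again. I want a bijection between $\gamma$-invariant faces of $C$ and all faces of $C^\gamma$. Given a face $G$ of $C^\gamma$, its relative interior lies in the relative interior of a unique face $F$ of $C$; then $F$ is $\gamma$-invariant since $G$ is $\gamma$-fixed and $\gamma$ permutes faces preserving relative interiors. Conversely a $\gamma$-invariant face $F$ meets $(\R^{d+1})^\gamma$ in a face $F\cap(\R^{d+1})^\gamma$ of $C^\gamma$ whose relative interior lies in $\mathrm{relint}(F)$ (the barycenter-of-orbit / averaging trick: averaging a relative interior point of $F$ over $\langle\gamma\rangle$ lands in $\mathrm{relint}(F)\cap(\R^{d+1})^\gamma$, so this intersection is nonempty and hence a face of the right "type"). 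These two maps are inverse and order-preserving, so $B_\gamma \cong$ face poset of $C^\gamma$, which is Eulerian by Example~\ref{polytope}, with rank $r(F) = \dim(F\cap(\R^{d+1})^\gamma) = \dim(\rho_F^\gamma)$, the dimension of the $\gamma$-fixed subspace of the span of $F$.

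With $B_\gamma$ Eulerian established, I know abstractly that $\mu_\gamma(F,F') = (-1)^{r(F')-r(F)}$ where $r$ is the rank in $B_\gamma$. So the lemma reduces to the purely linear-algebraic identity
\[
(-1)^{r(F') - r(F)} = (-1)^{\dim F' - \dim F}\det\rho_F(\gamma)\det\rho_{F'}(\gamma).
\]
Equivalently, setting $V = \mathrm{span}(F) \subseteq W = \mathrm{span}(F')$ with $\gamma$ acting orthogonally (or at least as a finite-order, hence diagonalizable, operator) on each, I must show
\[
(-1)^{\dim W^\gamma - \dim V^\gamma} = (-1)^{\dim W - \dim V}\det(\gamma|_V)\det(\gamma|_W).
\]
This follows by tracking eigenvalues: for a finite-order real operator $g$ on a real vector space $U$, $\det(g|_U) = (-1)^{(\dim U - \dim U^g - m)/1}$... more precisely, the $-1$-eigenspace contributes a factor $(-1)$ each, complex-conjugate pairs contribute $+1$, and the $+1$-eigenspace contributes $+1$; so $\det(g|_U) = (-1)^{\dim U_{-1}}$ where $U_{-1}$ is the $(-1)$-eigenspace, and $\dim U - \dim U^g \equiv \dim U_{-1} \pmod 2$ since the non-fixed, non-$(-1)$ part has even dimension. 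Hence $\det(g|_U) = (-1)^{\dim U - \dim U^g}$. Applying this to $V$ and $W$ and subtracting exponents gives exactly the claimed identity, using $\det(\gamma|_V)^2 = \det(\gamma|_W)^2 = 1$ to move factors across. This also needs the convention $\rho_F = $ trivial for $F = \{0\}$, which is consistent.

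The main obstacle I anticipate is the first step: carefully proving that $B_\gamma$ is the face poset of $C^\gamma$ (or otherwise directly proving it is Eulerian). The averaging argument showing that a $\gamma$-invariant face has its relative interior meeting the fixed subspace, and that this sets up an order-isomorphism of posets, is geometrically clean but must be written precisely — in particular checking that the correspondence is inclusion-preserving in both directions and that ranks match. Once $B_\gamma$ is known to be Eulerian with the stated rank function, the rest is the short eigenvalue computation above. I would present it in that order: (1) the face-poset identification and Eulerian-ness with rank $r(F) = \dim\rho_F^\gamma$; (2) invoke the Eulerian property to get $\mu_\gamma(F,F') = (-1)^{r(F')-r(F)}$; (3) the linear-algebra lemma $\det(g|_U) = (-1)^{\dim U - \dim U^g}$ and conclude.
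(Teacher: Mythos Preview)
Your proposal is correct and follows essentially the same route as the paper: identify $B_\gamma$ with the face poset of the fixed cone $C\cap(\R^{d+1})^\gamma$ via the averaging argument, invoke Example~\ref{polytope} to conclude it is Eulerian with rank $r(F)=\dim F_\gamma$, and then use the identity $\det\rho_F(\gamma)=(-1)^{\dim F-\dim F_\gamma}$ to rewrite the M\"obius function. The only difference is that the paper cites this last identity from \cite[Lemma~5.5, Remark~5.6]{YoEquivariant}, whereas you supply the short eigenvalue argument directly.
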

\begin{proof}
Consider the linear subspace $L_\gamma = \{ x \in \R^{d + 1} \mid \gamma \cdot x = x \}$ and the cone $C_\gamma  = C \cap L_\gamma$.   Observe that every $\gamma$-invariant face of $C$ contains a $\gamma$-fixed point in its relative interior. Indeed, we can construct such a point by 
summing the vectors in the $\gamma$-orbit of any fixed interior point.
It follows that  the elements 
  of $B_\gamma$ are in inclusion-preserving bijection with the faces of $C_\gamma$, where a $\gamma$-invariant face $F$ of $C$ corresponds to the face $F_\gamma = F \cap L_\gamma$ of $C_\gamma$. 
  Moreover, Lemma~5.5 and Remark~5.6 in \cite{YoEquivariant} imply that
  \[
  \det \rho_F(\gamma) = (-1)^{\dim F - \dim F_\gamma}. 
  \]
  Hence Example~\ref{polytope} implies that $B_{\gamma}$ is an Eulerian poset with M\"obius function given by
  \[
  \mu_{\gamma}(F, F') = (-1)^{\dim F_\gamma' - \dim {F}_\gamma} = (-1)^{\dim F' - \dim F} \det \rho_F (\gamma) \det \rho_{F'} (\gamma).
  \]
\end{proof}

We will also need the following lemma.

\begin{lemma}\label{l:once}
Fix $\gamma \in \Gamma$, and let $F$ be a non-zero $\gamma$-invariant face of $C$. Then
\[
(-t)^{\dim F} \det(t^{-1}I - \rho_F(\gamma)) = \det \rho_F(\gamma) \det(tI - \rho_F(\gamma)). 
\]
\end{lemma}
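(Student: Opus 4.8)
The plan is to reduce the stated identity to a comparison of characteristic polynomials and then exploit that $\rho_F(\gamma)$ is a \emph{real} matrix of \emph{finite order}. Write $n = \dim F$ and $A = \rho_F(\gamma) \in \GL_n(\R)$. Since $\gamma$ has finite order, so does $A$; hence $A$ is diagonalizable over $\C$ with all eigenvalues roots of unity, and $\det A \in \{\pm 1\}$.

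First I would rewrite the left-hand side by factoring $t^{-1}$ out of the determinant: $\det(t^{-1} I - A) = \det\!\big(t^{-1}(I - tA)\big) = t^{-n}\det(I - tA)$, so that
\[
(-t)^{\dim F}\det(t^{-1} I - \rho_F(\gamma)) = (-1)^n \det(I - tA) = \det(tA - I).
\]
Thus the lemma is equivalent to the identity $\det(tA - I) = \det(A)\det(tI - A)$. Now $tA - I = A(tI - A^{-1})$, so $\det(tA - I) = \det(A)\det(tI - A^{-1})$, and it suffices to prove $\det(tI - A) = \det(tI - A^{-1})$, i.e.\ that $A$ and $A^{-1}$ have the same characteristic polynomial.

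For this last step I would argue at the level of eigenvalues. Let $\lambda_1, \dots, \lambda_n$ be the eigenvalues of $A$ listed with multiplicity; each is a root of unity, so $\lambda_i^{-1} = \overline{\lambda_i}$. Because $A$ is a real matrix, its characteristic polynomial has real coefficients, so the multiset $\{\lambda_1, \dots, \lambda_n\}$ is invariant under complex conjugation, and therefore also under inversion. Hence
\[
\det(tI - A^{-1}) = \prod_{i=1}^n (t - \lambda_i^{-1}) = \prod_{i=1}^n (t - \lambda_i) = \det(tI - A),
\]
which finishes the proof.

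The computation is short; the only real content — and the point where a purely formal manipulation would fail — is that one genuinely needs $A$ to be conjugate to $A^{-1}$. This is false for a general complex representation (e.g.\ the one sending a generator to $(i)$), and here it is forced precisely by the combination of the two hypotheses in the setup: $\rho_F$ is a real representation and $\gamma$ has finite order. An equivalent route, which I would likely mention as an alternative, is to average an inner product over $\langle\gamma\rangle$ so that $A$ may be assumed orthogonal; then $A^{-1} = A^{T}$ and $\det(tI - A^{-1}) = \det(tI - A^{T}) = \det(tI - A)$.
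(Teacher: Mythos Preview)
Your proof is correct and uses essentially the same ingredients as the paper's: both rely on the fact that the eigenvalues of $\rho_F(\gamma)$ are roots of unity (finite order) and that the characteristic polynomial has real coefficients (real representation), so the eigenvalue multiset is closed under $\lambda \mapsto \lambda^{-1} = \bar\lambda$. The paper carries this out as a direct product-over-eigenvalues computation rather than first reducing, as you do, to the cleaner equivalent statement $\det(tI - A) = \det(tI - A^{-1})$, but the mathematical content is identical.
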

\begin{proof}
Since $\gamma$ has finite order, we may assume that $\gamma$ acts on the linear span of $F$ via a diagonal matrix
$(\lambda_1, \ldots, \lambda_{\dim F})$ whose entries are roots of unity. Using the fact that both sides of the equation above are real-valued polynomials, the left hand side equals
\begin{align*}
(-t)^{\dim F}(t^{-1} - \lambda_1)\cdots (t^{-1} - \lambda_{\dim F}) &= 
\lambda_1 \cdots \lambda_{\dim F} (t - \bar{\lambda}_1)\cdots (t - \bar{\lambda}_{\dim F})  \\
&= \det \rho_F(\gamma)  (t - \lambda_1)\cdots (t - \lambda_{\dim F}).
\end{align*}
\end{proof}

Stanley introduced the $h$-polynomial and $g$-polynomial of an Eulerian poset in \cite{StaGeneralized}. Our next goal is to recursively define two polynomials of virtual representations associated to the action of $\Gamma$ on $C$, which may be viewed as representation-theoretic analogues of the $h$-polynomial and $g$-polynomial of the poset of faces of $C$. 

More specifically, if 
$F$ is a non-zero face of $C$,
consider the polynomial of virtual representations
\begin{equation}\label{e:determinant}
\det[tI - \rho_F] := \sum_{i = 0}^{\dim F} (-1)^{\dim F - i}(\wedge^{\dim F - i}\rho_F) t^{i}  \in R(\Gamma_F)[t],
\end{equation}
where $\wedge^{j}\rho_F$ denotes the $j^{\textrm{th}}$ exterior product of the representation $\rho_F$. Observe that the evaluation of the associated character  $\det[tI - \rho_F](\gamma)$ at $\gamma \in \Gamma$ is equal to  $\det(tI - \rho_F(\gamma))$.  If $F = \{ 0 \}$, then we set $\det[tI - \rho_F] = 1 \in R(\Gamma)$. Induction of  representations from $\Gamma_F$ to $\Gamma$ gives rise to an additive homomorphism
$\Ind_{\Gamma_F}^{\Gamma}: R(\Gamma_F)[t] \rightarrow R(\Gamma)[t]$.

\begin{definition}\label{d:hg}
 If $C = \{0 \}$, then define $H(C, t) = G(C, t) \in R(\Gamma)[t]$ to be the trivial representation. If
 $C \ne \{0 \}$, then define elements $H(C, t)$ and $G(C,t)$ of $R(\Gamma)[t]$ recursively as follows:
\[
H(C,t) = \sum_{\substack{[F] \in C/\Gamma \\ F \ne C }} \det[tI - \rho] \Ind_{\Gamma_F}^{\Gamma}
\frac{G(F,t)}{(t - 1)  \det[tI - \rho_F] },
\]
where $C/\Gamma$ denotes the set of $\Gamma$-orbits of faces of $C$,  and
\[
G(C, t) = \tau_{\le \frac{\dim C - 1}{2}} (1 - t) H(C, t), 
\]
where $\tau_{\le i}$ denotes truncation of all terms of degree at most $i$. 
\end{definition}

\begin{remark}\label{r:character}
Assuming $C \ne \{ 0 \}$, if one evaluates the virtual characters of the virtual representations above at $\gamma \in \Gamma$, one obtains 
\[
H(C,t)(\gamma) = \frac{\det(tI - \rho(\gamma))}{t - 1} \sum_{\substack{\gamma \cdot F = F \\ F \ne C}} 
\frac{G(F,t)(\gamma)}{\det(tI - \rho_F(\gamma)) },
\]
where $\det(tI - \rho_F(\gamma)) = 1$ when $F = \{ 0 \}$, and 
\[
G(C, t)(\gamma) = \tau_{\le \frac{\dim C - 1}{2}} (1 - t) H(C, t)(\gamma). 
\]
Since $\gamma$ fixes a non-zero vector in the interior of $C$, it 
 follows that $\frac{\det(tI - \rho(\gamma))}{(t - 1)\det(tI - \rho_F(\gamma))}$ is a polynomial in $t$ of degree $d  - \dim F$. We deduce that $H(C,t)$ is a polynomial of degree $d$ with leading coefficient equal to $1 \in R(\Gamma)$. 
 Observe that
 evaluation of characters at $1 \in \Gamma$ yields the usual $h$-polynomial and $g$-polynomial of the poset of faces of $C$ \cite{StaGeneralized}. 
\end{remark}

\begin{remark}\label{r:geometric}
The equations in the definition above  were motivated by
the following geometric interpretation of $H(C,t)$ and $G(C,t)$, which extends known interpretations of the $h$-polynomial and $g$-polynomial when $\Gamma = \{1 \}$. In the latter case, we refer the reader to 
\cite{BraRemarks} for an excellent survey paper on the combinatorial intersection cohomology of fans.

Firstly, observe that $C$ may be viewed as the cone over a $\Gamma$-invariant polytope $P$. 
Indeed, $\Gamma$ acts on the dual cone $\check{C}$ of $C$, and one may construct a $\Gamma$-invariant point $u$ in the interior of $\check{C}$ by summing the elements of a $\Gamma$-orbit of any interior point. The intersection of $C$ with an appropriate affine translate $H$ of the $\Gamma$-invariant hyperplane 
determined by $u$ is a   $\Gamma$-invariant polytope $P$. Moreover, one may fix a $\Gamma$-invariant point $v$ in the relative interior of $P$.
 
Let $\Sigma$ denote the $\Gamma$-invariant fan over the faces of $P$ in $H - v$. 
Then $\Gamma$ acts on the global sections $IH^{2i}(\Sigma)$ of the intersection cohomology sheaf of $\Sigma$, and $H(C,t) = \sum_{i = 0}^d IH^{2i}(\Sigma) t^i \in R(G)$. In particular, $H(C,t)$ is a polynomial of representations (rather than virtual representations) of $\Gamma$, and Poincar\'e duality for intersection cohomology sheaves on fans implies that $H(C,t) = t^d H(C,t^{-1})$. The Hard Lefschetz theorem then implies that 
$G(C,t)$ is a polynomial of representations. 

We will not need this remark in what follows, so we leave the proof open. In Proposition~\ref{p:symmetry}, we give a combinatorial proof that $H(C,t) = t^d H(C,t^{-1})$. 
\end{remark}

\begin{example}
Suppose that $C$ is a simplicial cone i.e. $C$ has precisely $d + 1$ rays. 
Then it follows from Remark~\ref{r:geometric} that the coefficients of $H(C,t)$ are the representations of $\Gamma$ on the cohomology of $\P^d$, and hence 
$H(C,t) = 1 + t + \cdots + t^d$ and $G(C,t) = 1$.  
One may also deduce this from the definition. 
Indeed, in this case $\rho$ is  the permutation representation of $\Gamma$ acting on the rays of $C$. For any $\gamma \in \Gamma$, let $I_1, \ldots, I_s$ denote the  $\gamma$-orbits of rays of $C$.  
Then the $\gamma$-invariant faces $F_J$ of $C$ are precisely the faces spanned by the rays in a subset
$J \subseteq \{ 1, \ldots , s \}$
 of the $\gamma$-orbits of rays of $C$. Using induction, we compute
 \begin{align*}
 H(C,t)(\gamma) = \sum_{J \subsetneq \{ 1, \ldots , s \}} 
\frac{ \prod_{j \notin J} (t^{|I_j|} - 1)}{t - 1 } &=  
\frac{ \prod_{J \subseteq \{ 1, \ldots , s \}}  ((t^{|I_j|} - 1) + 1) - 1}{t - 1 }  \\
&= 1 + t + \cdots + t^d.
 \end{align*}
\end{example}

\begin{example}\label{e:cc}
Let $\Gamma = \Z_2 = \{ 1, \epsilon \}$ act on $\R^d$ sending $v$ to $-v$, and let $P$ be a $d$-dimensional 
$\Z_2$-invariant polytope. In this case, $P$ is called \emph{centrally symmetric}. 
If $C$ denotes the cone over $P \times 1$ in $\R^{d + 1}$, then $C$ has no proper, non-zero $\epsilon$-invariant faces. By Remark~\ref{r:character}, $H(C,t)(1) = h(C,t) = \sum_{i = 0}^d h_i t^i$ is the usual $h$-vector of $C$, and $H(C,t)(\epsilon) = (1 + t)^d$. If $\zeta$ denotes the non-trivial character of $\Z_2$, we deduce that 
\[
H(C,t) = \frac{h(C,t) + (1 + t)^d}{2} +   \frac{h(C,t) - (1 + t)^d}{2}\zeta.  
\] 
In particular, the Hard Lefschetz theorem (see Remark~\ref{r:geometric}) implies that $h_i - h_{i - 1} \ge \binom{d}{i} - \binom{d}{i - 1}$ for $1 \le i \le \lfloor \frac{d}{2} \rfloor$, a result due to A'Campo-Neuen
 \cite{ACaToric}.
\end{example}

\begin{example}
Let $\Gamma = \Sym_3$ act on $V = \R^3$ by the standard representation, and let $P = [-1,1]^3$ be a $\Gamma$-invariant polytope. 
If $C$ denotes the cone over $P \times 1$ in $\R^4$, then  one 
computes that $H(C,t) = 1 + (2 + V)t + (2 + V)t^2 + t^3$ and $G(C,t) = 1 + (1 + V)t$. 
\end{example}

When $\Gamma$ is trivial, the result below holds, more generally,  for $h$-polynomials and $g$-polynomials of Eulerian posets \cite[Theorem~2.4]{StaGeneralized}.

\begin{proposition}\label{p:symmetry}
With the notation above, $H(C,t) = t^d H(C, t^{-1})$. Equivalently,
\[
t^{d + 1} G(C,t^{-1}) =  \sum_{ [F] \in C/\Gamma}
\det[tI - \rho]  \Ind^{\Gamma}_{\Gamma_F} \frac{G(F,t)}{\det[tI - \rho_F]},
\]
where $C/\Gamma$ denotes the set of $\Gamma$-orbits of faces of $C$.
\end{proposition}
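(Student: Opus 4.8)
The plan is to prove the identity $H(C,t) = t^d H(C,t^{-1})$ characterwise: fix $\gamma \in \Gamma$ and show the polynomial identity $H(C,t)(\gamma) = t^d H(C,t^{-1})(\gamma)$ in $\C[t]$. Since the coefficients of $H(C,t)$ are virtual characters of $\Gamma$ and two virtual characters agree iff they agree on every $\gamma$, this suffices. The advantage is that after fixing $\gamma$, Remark~\ref{r:character} expresses everything in terms of the \emph{ordinary} rational-function recursion over the poset $B_\gamma$ of $\gamma$-invariant faces, which by Lemma~\ref{l:mobius} is an Eulerian poset with an explicit M\"obius function. So the strategy reduces the equivariant statement to (a twisted version of) Stanley's classical symmetry theorem \cite[Theorem~2.4]{StaGeneralized} for Eulerian posets, applied to $B_\gamma$.

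Concretely, I would proceed as follows. First, using Remark~\ref{r:character}, write
\[
H(C,t)(\gamma) = \frac{\det(tI - \rho(\gamma))}{t-1}\sum_{\substack{\gamma \cdot F = F\\ F \ne C}}\frac{G(F,t)(\gamma)}{\det(tI - \rho_F(\gamma))},
\]
and set $\hat G(F,t) := G(F,t)(\gamma)/\det(tI - \rho_F(\gamma))$, $\hat H(F,t) := H(F,t)(\gamma)/\det(tI-\rho_F(\gamma))$ for each $\gamma$-invariant face $F$ (with the convention that the denominator is $1$ when $F = \{0\}$). The recursion becomes $\hat H(F,t) = \frac{1}{t-1}\sum_{\gamma \cdot F' = F,\ F' \ne F}\hat G(F',t)$ and $G(F,t)(\gamma) = \tau_{\le (\dim F - 1)/2}(1-t)H(F,t)(\gamma)$. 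This is precisely the defining recursion for the $g$- and $h$-polynomials of the Eulerian poset interval structure on $B_\gamma$ — except that the truncation degree is governed by $\dim F$ rather than by the rank $r_\gamma(F) = \dim F_\gamma$ of $F$ inside $B_\gamma$. The key point, from the proof of Lemma~\ref{l:mobius}, is that $\dim F - \dim F_\gamma$ has constant parity on... no: rather, the degree of the rational function $\det(tI-\rho(\gamma))/((t-1)\det(tI-\rho_F(\gamma)))$ is $d - \dim F$, and I will need to track how the "defect" $\dim F - \dim F_\gamma$ interacts with the truncation and with Lemma~\ref{l:once}.

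The engine of the proof is Lemma~\ref{l:once}, which gives the functional equation $(-t)^{\dim F}\det(t^{-1}I - \rho_F(\gamma)) = \det\rho_F(\gamma)\det(tI - \rho_F(\gamma))$ for each $\gamma$-invariant face. Substituting $t \mapsto t^{-1}$ in the recursion for $H(C,t^{-1})(\gamma)$, multiplying through by $t^d$, and applying Lemma~\ref{l:once} to each factor $\det(t^{-1}I - \rho_F(\gamma))$ and $\det(t^{-1}I - \rho(\gamma))$, the $\det\rho(\gamma)$ and $\det\rho_F(\gamma)$ factors should combine — via the M\"obius formula of Lemma~\ref{l:mobius}, which is exactly $(-1)^{\dim F' - \dim F}\det\rho_F(\gamma)\det\rho_{F'}(\gamma)$ — into the Eulerian-poset M\"obius function of $B_\gamma$. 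At that point the desired symmetry for $H(C,t)(\gamma)$ follows by running Stanley's inductive argument \cite[Theorem~2.4]{StaGeneralized} verbatim on the Eulerian poset $B_\gamma$: one shows by induction on $\dim C$ (equivalently on the rank of $B_\gamma$) that $\hat H(C,t)$ satisfies $t^{r_\gamma(C)}\hat H(C,t^{-1}) = \hat H(C,t)$ up to the appropriate monomial factor, using the sub-faces' symmetry as the inductive hypothesis and the $g$-truncation to pin down the top half of the coefficients.

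The second displayed equation is then just an algebraic reformulation: starting from $H(C,t) = \sum_{[F]\ne [C]}\det[tI-\rho]\,\Ind_{\Gamma_F}^\Gamma\frac{G(F,t)}{(t-1)\det[tI-\rho_F]}$, one adds and subtracts the missing $F = C$ term, uses $G(C,t) = \tau_{\le (d-1)/2}(1-t)H(C,t)$ together with the now-established symmetry $H(C,t) = t^d H(C,t^{-1})$ to identify $(1-t)H(C,t) = G(C,t) - t^{d+1}G(C,t^{-1})$ (the standard relation between $g$ and $h$ once Poincar\'e duality is known), and rearranges; the $\Ind$ over orbits and the $\det[tI-\rho_F]$ denominators bookkeep correctly because $\Ind_{\Gamma_C}^\Gamma$ is the identity and $\det[tI-\rho_C] = \det[tI-\rho]$.

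The main obstacle I anticipate is bookkeeping the discrepancy between the ambient dimension $\dim F$ appearing in the truncation operator $\tau_{\le \frac{\dim C - 1}{2}}$ and the intrinsic rank $\dim F_\gamma$ of the face in the Eulerian poset $B_\gamma$ — i.e., making sure that the "extra" factor $(-t)^{\dim F - \dim F_\gamma} = \det\rho_F(\gamma)\cdot(\text{unit})$ coming from Lemma~5.5/5.6 of \cite{YoEquivariant} is absorbed consistently on both sides of the functional equation, so that the $g$-polynomial truncation — which one wants to happen at degree $\frac{\dim F_\gamma - 1}{2}$ to match Stanley's setup — is compatible with the stated truncation at $\frac{\dim F - 1}{2}$. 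Verifying that these truncation levels agree after the normalization, or else that the mismatch is harmless because the relevant coefficients vanish, is the delicate step; everything else is a faithful transcription of the classical Eulerian-poset argument.
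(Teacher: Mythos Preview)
Your overall strategy --- fix $\gamma$, work characterwise, use Lemma~\ref{l:once} for the functional equation of the determinant factors, use Lemma~\ref{l:mobius} for the M\"obius structure of $B_\gamma$, and argue by induction on dimension --- is exactly the paper's approach, and your derivation of the equivalence of the two displayed statements is correct (modulo the slip $\tau_{\le (d-1)/2}$ for $\tau_{\le d/2}$).

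Where your plan wobbles is the attempt to \emph{literally} reduce to Stanley's theorem via the normalization $\hat G(F,t) = G(F,t)(\gamma)/\det(tI-\rho_F(\gamma))$. This does not produce Stanley's recursion: the weight $\det(tI-\rho(\gamma))/\big((t-1)\det(tI-\rho_F(\gamma))\big)$ is a polynomial of degree $d-\dim F$, but it is not $(t-1)^{d-\dim F}$ unless $\gamma$ fixes the quotient pointwise. So $H(C,t)(\gamma)$ is \emph{not} the $h$-polynomial of $B_\gamma$, and the truncation mismatch you worry about (truncating at $\frac{\dim F-1}{2}$ rather than $\frac{\dim F_\gamma-1}{2}$) is a genuine symptom of this. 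You cannot simply quote \cite[Theorem~2.4]{StaGeneralized}.

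The paper sidesteps this by abandoning the reduction and rerunning Stanley's M\"obius-inversion argument directly with the correct weights. Concretely: expand $t^d H(C,t^{-1})(\gamma)$ via Remark~\ref{r:character}; apply Lemma~\ref{l:once} to numerator and denominator to convert every $\det(t^{-1}I-\rho_\bullet(\gamma))$ into $\det(tI-\rho_\bullet(\gamma))$, picking up factors $(-1)^{\dim F}\det\rho_F(\gamma)$; observe that the combined sign $(-1)^{d-\dim F}\det\rho(\gamma)\det\rho_F(\gamma)$ is exactly $-\mu_\gamma(F,C)$ by Lemma~\ref{l:mobius}; then invoke the inductive hypothesis in its \emph{second} form, $t^{\dim F}G(F,t^{-1})(\gamma) = \sum_{F'\subseteq F}\frac{\det(tI-\rho_F(\gamma))}{\det(tI-\rho_{F'}(\gamma))}G(F',t)(\gamma)$, for each proper $\gamma$-invariant $F$; finally swap the order of summation and collapse the inner sum $\sum_{F'\subseteq F\subsetneq C}\mu_\gamma(F,C) = -1$ using the defining property of the M\"obius function. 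The Eulerian poset $B_\gamma$ enters only through its M\"obius function; the grading, degree, and truncation are all governed by $\dim F$, so your bookkeeping concern evaporates once you stop trying to match ranks in $B_\gamma$.
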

\begin{proof}
By definition, the second statement is equivalent to 
\[
t^{d + 1} G(C,t^{-1}) - G(C,t) = (t - 1)H(C,t),
\]
which is equivalent to $H(C,t) = t^d H(C, t^{-1})$. 
For any $\gamma \in \Gamma$,
\begin{equation*}
t^d H(C, t^{-1})(\gamma) = \frac{t^d \det(t^{-1}I - \rho(\gamma))}{t^{-1} - 1} \sum_{\substack{\gamma \cdot F = F \\ F \ne C}} 
\frac{G(F,t^{-1})(\gamma)}{\det(t^{-1}I - \rho_F(\gamma)) },  
\end{equation*}
where $\det(t^{-1}I - \rho_F(\gamma)) = 1$ if $F = \{ 0 \}$. By Lemma~\ref{l:once}, the latter sum equals
\[
 \frac{\det(tI - \rho(\gamma))}{t - 1} \sum_{\substack{\gamma \cdot F = F \\ F \ne C}} 
\frac{(-1)^{d - \dim F} \det \rho(\gamma) \det \rho_F(\gamma)  t^{\dim F}G(F,t^{-1})(\gamma)}{\det(tI - \rho_F(\gamma)) },
\]
where $\det(tI - \rho_F(\gamma)) = 1$ if $F = \{ 0 \}$. 
By Lemma~\ref{l:mobius}, we conclude that 
\[
 t^d H(C, t^{-1})(\gamma)  = \frac{-\det(tI - \rho(\gamma))}{t - 1} \sum_{\substack{\gamma \cdot F = F \\ F \ne C}} 
\frac{\mu_\gamma(F,C)  t^{\dim F}G(F,t^{-1})(\gamma)}{\det(tI - \rho_F(\gamma)) }.
\]
By induction on dimension, the latter sum equals
 \begin{align*}
 &\frac{-\det(tI - \rho(\gamma))}{t - 1} \sum_{\substack{\gamma \cdot F = F \\ F \ne C}} 
\frac{\mu_\gamma(F,C)}{\det(tI - \rho_F(\gamma)) }\sum_{\substack{\gamma \cdot F' = F' \\ F' \subseteq F}} \frac{ \det(tI - \rho_F(\gamma))G(F',t)(\gamma) }{ \det(tI - \rho_{F'}(\gamma)) } \\
&= \frac{-\det(tI - \rho(\gamma))}{t - 1} \sum_{\substack{\gamma \cdot F' = F' \\ F' \ne C}} 
 \frac{ G(F',t)(\gamma) }{ \det(tI - \rho_{F'}(\gamma)) }
  \sum_{       \substack{\gamma \cdot F = F \\ F' \subseteq F \ne C}  } \mu_\gamma(F,C) \\
  &=  \frac{\det(tI - \rho(\gamma))}{t - 1} \sum_{\substack{\gamma \cdot F' = F' \\ F' \ne C}} 
 \frac{ G(F',t)(\gamma) }{ \det(tI - \rho_{F'}(\gamma)) }  = H(C,t)(\gamma). 
\end{align*}
\end{proof}

We will need the following two lemmas. Recall that $\Gamma$ acts on the dual cone $\check{C}$ of $C$, and there is an inclusion reversing bijection between the faces $F$ of $C$ and the faces $F^*$ of 
 $\check{C}$ such that $\dim F + \dim F^* = d + 1$.

\begin{lemma}\label{l:ratio}
If $F \subseteq F'$ are $\gamma$-invariant faces, then
\[
\frac{ \det(tI - \rho_{F'}(\gamma)) }{ \det(tI - \rho_{F}(\gamma))  } = \frac{ \det(tI - \rho_{F^*}(\gamma)) }{ \det(tI - \rho_{(F')^*}(\gamma))  } 
\] 
\end{lemma}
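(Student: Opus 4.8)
The plan is to work $\gamma$-equivariantly and reduce the statement to a clean relationship between the characteristic polynomials of $\rho_F(\gamma)$, $\rho_{F'}(\gamma)$ and their duals on the dual cone. Fix $\gamma\in\Gamma$ and recall that, as in the proof of Lemma~\ref{l:mobius}, a $\gamma$-invariant face $F$ of $C$ contains a $\gamma$-fixed point in its relative interior, so passing to the fixed subspace $L_\gamma$ gives an inclusion-preserving bijection $F\mapsto F_\gamma=F\cap L_\gamma$ between $\gamma$-invariant faces of $C$ and faces of $C_\gamma=C\cap L_\gamma$. The key structural point is that the ambient linear space $\R^{d+1}$ splits $\gamma$-equivariantly as an orthogonal direct sum of $\gamma$-invariant subspaces: the linear span $\langle F\rangle$, a complement inside $\langle F'\rangle$ (call its representation $\rho_{F'/F}$), and a further complement. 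Since $F\subseteq F'$ corresponds under duality to $(F')^*\subseteq F^*$ with $\dim F^*-\dim(F')^* = \dim F' - \dim F$, the representation of $\Gamma_F\cap\Gamma_{F'}$ (which contains $\gamma$) on the quotient $\langle F'\rangle/\langle F\rangle$ and on the quotient $\langle F^*\rangle/\langle (F')^*\rangle$ should be identified — up to the duality $\lambda\mapsto\bar\lambda$ on eigenvalues, which is harmless for characteristic polynomials of a finite-order real operator since the eigenvalues come in conjugate pairs.

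Concretely, I would first observe that it suffices to prove, for $\gamma$-invariant $F\subseteq F'$,
\[
\frac{\det(tI - \rho_{F'}(\gamma))}{\det(tI-\rho_F(\gamma))} = \det(tI - \rho_{F'/F}(\gamma)) = \frac{\det(tI-\rho_{F^*}(\gamma))}{\det(tI-\rho_{(F')^*}(\gamma))},
\]
where $\rho_{F'/F}$ is the representation of $\Gamma_F\cap\Gamma_{F'}$ on $\langle F'\rangle/\langle F\rangle$. The left equality is immediate from the short exact sequence $0\to\langle F\rangle\to\langle F'\rangle\to\langle F'\rangle/\langle F\rangle\to 0$ of $\langle\gamma\rangle$-modules, using multiplicativity of $\det(tI-(-))$ in short exact sequences (equivalently, diagonalizing $\gamma$). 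The right equality is the analogous statement on the dual side: $0\to\langle (F')^*\rangle\to\langle F^*\rangle\to\langle F^*\rangle/\langle (F')^*\rangle\to 0$. So the whole lemma comes down to identifying, as $\langle\gamma\rangle$-modules, $\langle F'\rangle/\langle F\rangle$ with $\langle F^*\rangle/\langle (F')^*\rangle$.

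For that identification I would use the standard duality between the quotient and a subspace of the dual. The annihilator construction gives $\langle F^*\rangle \subseteq (\R^{d+1})^\vee$ as (roughly) the annihilator of $\langle F\rangle$ — more precisely, $F^*$ spans the orthogonal complement of $\langle F\rangle$ when $\check C$ is realized in the dual lattice — so $\langle F^*\rangle \cong (\R^{d+1}/\langle F\rangle)^\vee$ and $\langle (F')^*\rangle \cong (\R^{d+1}/\langle F'\rangle)^\vee$ as $\langle\gamma\rangle$-modules. Hence the quotient $\langle F^*\rangle/\langle (F')^*\rangle$ is the dual of $\langle F'\rangle/\langle F\rangle$. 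A finite-order real operator and its transpose-inverse have the same characteristic polynomial over $\R$ (eigenvalues are closed under complex conjugation and inversion coincides with conjugation on the unit circle), so $\det(tI - \rho_{F^*}(\gamma)) / \det(tI-\rho_{(F')^*}(\gamma)) = \det(tI - (\rho_{F'/F})^\vee(\gamma)) = \det(tI-\rho_{F'/F}(\gamma))$, which is what we want.

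The main obstacle, and the step deserving the most care, is the bookkeeping in the duality identification $\langle F^*\rangle\cong(\R^{d+1}/\langle F\rangle)^\vee$ as $\langle\gamma\rangle$-modules — getting the conventions for the dual cone, its faces, and the dimension count $\dim F+\dim F^*=d+1$ to line up, and making sure the splittings are genuinely $\gamma$-equivariant. Everything else (multiplicativity of characteristic polynomials, conjugation-invariance) is routine once $\gamma$ is diagonalized over $\C$. One clean alternative that sidesteps some of the bookkeeping: diagonalize $\rho(\gamma)$ on $\R^{d+1}\otimes\C$, track eigenvalues as a multiset, note that $F\subseteq F'$ forces the eigenvalue multiset of $\rho_F(\gamma)$ to be a sub-multiset of that of $\rho_{F'}(\gamma)$, and then verify via Lemma~\ref{l:once}-style conjugation that the complementary multiset is exactly the one appearing in the ratio $\det(tI-\rho_{F^*}(\gamma))/\det(tI-\rho_{(F')^*}(\gamma))$; I would present the short-exact-sequence version since it is more transparent, falling back on eigenvalue multisets only if a sign or conjugation issue needs to be pinned down.
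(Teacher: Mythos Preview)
Your proposal is correct and follows essentially the same approach as the paper: reduce to identifying the characteristic polynomial of $\gamma$ on $\langle F'\rangle/\langle F\rangle$ with that on $\langle F^*\rangle/\langle (F')^*\rangle$, then use that the latter quotient is the dual representation (hence has inverse eigenvalues) together with reality of $\rho$ (so inversion equals conjugation on the eigenvalue multiset). The paper's proof is precisely the terse eigenvalue-multiset version you sketch as your ``alternative,'' while your main write-up packages the same content in the language of short exact sequences and annihilators.
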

\begin{proof}
Let $L_F$ and $L_{F'}$ denote the linear spans of $F$ and $F'$ respectively. If $\gamma$ acts on 
$L_{F'}/L_F$ with eigenvalues $\{ \lambda_1, \ldots, \lambda_r \}$, then  $\gamma$ acts on 
$L_{F^*}/L_{(F')^*}$ with eigenvalues $\{ \lambda_1^{-1}, \ldots, \lambda_r^{-1} \}$. Since $\rho$ is a real representation, 
\[
\{ \lambda_1^{-1}, \ldots, \lambda_r^{-1} \} = \{ \bar{\lambda}_1, \ldots, \bar{\lambda}_r \} = 
\{ \lambda_1, \ldots, \lambda_r \}.
\]
\end{proof}

In the case when $\Gamma$ is trivial, a version of the lemma below for Eulerian posets is proved by Stanley in \cite[Corollary 8.3]{StaSubdivisions}. 
If $F$ is a face of $C$ with linear span $L_F$, then let $C/F$ denote the projection of $C$ to $\R^{d + 1}/L_F$.

\begin{lemma}\label{l:convolution}
With the notation above, if $C$ is a non-zero cone, then 
\[
\sum_{[F] \in C/\Gamma} (-1)^{\dim F} \Ind_{\Gamma_F}^{\Gamma} \det(\rho_F) G(C/F,t)G(\check{C}/F^*,t) = 0,
\]
where $C/\Gamma$ denotes the set of $\Gamma$-orbits of faces of $C$.
\end{lemma}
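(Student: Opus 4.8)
The plan is to reduce everything to the character level, i.e. to fix $\gamma \in \Gamma$ and prove the scalar identity
\[
\sum_{\substack{\gamma \cdot F = F}} (-1)^{\dim F} \det(\rho_F(\gamma)) \, G(C/F,t)(\gamma) \, G(\check{C}/F^*,t)(\gamma) = 0,
\]
since the inner induction $\Ind_{\Gamma_F}^{\Gamma}$ evaluated at $\gamma$ sums the character over the $\gamma$-fixed faces in each orbit, and a virtual representation is zero iff all its characters vanish. Having fixed $\gamma$, I would pass to the cone $C_\gamma = C \cap L_\gamma$ as in Lemma~\ref{l:mobius}: $\gamma$-invariant faces $F$ of $C$ correspond bijectively and order-preservingly to faces $F_\gamma$ of $C_\gamma$, and one checks (using Lemma~\ref{l:ratio} and the identification of $\check{C}_\gamma$ with $(\check C)_\gamma$) that $(C/F)_\gamma \cong C_\gamma/F_\gamma$ and $(\check C/F^*)_\gamma \cong \check{C}_\gamma/(F_\gamma)^*$ as cones-with-$\gamma$-action, so that $G(C/F,t)(\gamma) = g(C_\gamma/F_\gamma, t)$ and $G(\check C/F^*,t)(\gamma) = g(\check{C}_\gamma/(F_\gamma)^*, t)$ are the ordinary $g$-polynomials; here I also use $\det(\rho_F(\gamma)) = (-1)^{\dim F - \dim F_\gamma}$ so that $(-1)^{\dim F}\det(\rho_F(\gamma)) = (-1)^{\dim F_\gamma}$. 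After this substitution the claimed identity becomes exactly Stanley's convolution identity $\sum_{F_\gamma} (-1)^{\dim F_\gamma} g(C_\gamma/F_\gamma,t) g(\check{C}_\gamma/(F_\gamma)^*,t) = 0$ for the Eulerian poset of faces of $C_\gamma$, which is \cite[Corollary~8.3]{StaSubdivisions}.

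The one subtlety I would be careful about is the two degenerate faces $F = \{0\}$ and $F = C$, where the conventions $G(\{0\},t) = 1$, $G(C/C,t) = 1$, $\rho_{\{0\}}$ trivial, etc., must be matched against Stanley's conventions for the $\{0\}$ and top faces of $C_\gamma$; this should go through verbatim but deserves an explicit sentence. I would also spell out why $(\check C)_\gamma = \check{C}_\gamma$ and why the duality $F \mapsto F^*$ on faces of $C$ restricts to the duality $F_\gamma \mapsto (F_\gamma)^*$ on faces of $C_\gamma$: this is because $L_\gamma$ is a $\gamma$-invariant (indeed, self-dual under the invariant inner product) subspace and the perpendicular of a $\gamma$-invariant face inside the ambient space intersects $L_\gamma$ in the perpendicular computed inside $L_\gamma$, which is the content already implicit in Lemma~\ref{l:ratio}.

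The main obstacle is bookkeeping rather than conceptual: establishing cleanly that the projections $C/F$ and $\check{C}/F^*$, equipped with the induced $\gamma_F$-action, have $\gamma$-fixed subcones that are precisely $C_\gamma/F_\gamma$ and $\check{C}_\gamma/(F_\gamma)^*$, so that Remark~\ref{r:character}'s character formula for $G$ can be invoked to identify $G(C/F,t)(\gamma)$ with the ordinary $g$-polynomial of $C_\gamma/F_\gamma$. Once that dictionary is in place, the result is an immediate pullback of the known Eulerian-poset identity, so no further combinatorial work is needed. An alternative, if one prefers to avoid quoting \cite{StaSubdivisions}, is to prove the scalar convolution identity directly by Möbius inversion on $B_\gamma$ using Lemma~\ref{l:mobius} together with Proposition~\ref{p:symmetry}, but quoting Stanley is cleaner.
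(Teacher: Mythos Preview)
Your reduction to Stanley's convolution identity rests on the claim that
\[
G(C/F,t)(\gamma) = g(C_\gamma/F_\gamma, t),
\]
i.e.\ that evaluating the equivariant $G$-polynomial at $\gamma$ returns the ordinary $g$-polynomial of the $\gamma$-fixed cone. This is false. Take $C$ to be the cone over the cube $P = [-1,1]^3$ with $\Gamma = \Sym_3$ permuting the three coordinates, as in Example~\ref{e:cc} just above (the paper's Example~2.7). For the $3$-cycle $\gamma = (123)$ one has $G(C,t)(\gamma) = 1 + t$ (since the permutation representation $V$ has trace $0$ at $\gamma$), whereas $C_\gamma$ is the $2$-dimensional cone over the segment $\{(x,x,x,1) : -1 \le x \le 1\}$, whose ordinary $g$-polynomial is $1$. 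The underlying reason is visible already in the recursive character formula of Remark~\ref{r:character}: the weighting factors are $\det(tI - \rho(\gamma)) / \det(tI - \rho_F(\gamma))$, not $(t-1)^{\dim C_\gamma - \dim F_\gamma}$, and these differ by a nontrivial polynomial whenever $\gamma$ has eigenvalues other than $1$ on the linear span of $C$. So the dictionary ``equivariant $G$ at $\gamma$ $=$ ordinary $g$ of the fixed cone'' breaks down exactly in the cases where the equivariance has content, and the appeal to \cite[Corollary~8.3]{StaSubdivisions} does not go through.

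What the paper does instead is essentially your dismissed ``alternative'': it works directly at the character level but without passing to $C_\gamma$. For any pair of $\gamma$-invariant faces $F' \subseteq F''$ it introduces the interval sum
\[
\Phi_{F',F''}(t) = \sum_{\substack{\gamma \cdot F = F \\ F' \subseteq F \subseteq F''}} (-1)^{\dim F} \det\rho_F(\gamma)\, G(F''/F,t)(\gamma)\, G((F')^*/F^*,t)(\gamma),
\]
and proves by induction on $\dim F'' - \dim F'$ that $\Phi_{F',F''} = 0$ whenever $F' \ne F''$. The mechanism is the same as in Stanley's proof: the degree bound on $G$ forces $\deg \Phi_{F',F''} \le \tfrac{1}{2}(\dim F'' - \dim F' - 1)$, and one shows $\Phi_{F',F''}(t) = t^{\dim F'' - \dim F'}\Phi_{F',F''}(t^{-1})$ by expanding via Proposition~\ref{p:symmetry}, rearranging, and applying the inductive hypothesis together with Lemma~\ref{l:ratio} and Lemma~\ref{l:mobius}. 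So the structure of Stanley's argument survives, but it must be carried out with the equivariant $G$-polynomials themselves, not with ordinary $g$-polynomials of fixed cones.
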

\begin{proof}
Fix $\gamma \in \Gamma$. For any $\gamma$-invariant faces 
$F' \subseteq F''$ of $C$,  let $\Phi_{F',F''}(t)$ be
\begin{equation*}
\sum_{  \substack{ \gamma \cdot F = F  \\ F' \subseteq F \subseteq F''         }}     (-1)^{\dim F} \det \rho_F(\gamma) G(F''/F,t)(\gamma)G((F')^*/F^*,t)(\gamma). 
\end{equation*}
We will prove the stronger claim that 
\[
\Phi_{F',F''}(t) = \left\{\begin{array}{cl}
 (-1)^{\dim F'} \det \rho_{F'}(\gamma) & \text{if } F' = F''  \\ 0 & \text{otherwise}. \end{array}\right. 
\]
The lemma then follows by setting $F' = \{ 0 \}$ and $F'' = C$. We proceed by induction on $\dim F'' - \dim F'$. Observe that the claim follows from the definitions when $F' = F''$, and hence we may assume that 
$F' \ne F''$. It follows from the definition of the $G$-polynomial that the degree of $\Phi_{F',F''}(t)$ is bounded by $\frac{ \dim F'' - \dim F' - 1 }{2}$. Hence it will be enough to show that $\Phi_{F',F''}(t) = 
t^{\dim F'' - \dim F' }\Phi_{F',F''}(t^{-1})$. We write $t^{\dim F'' - \dim F' }\Phi_{F',F''}(t^{-1})$ as
\[
\sum_{  \substack{ \gamma \cdot F = F  \\ F' \subseteq F \subseteq F''         }}     (-1)^{\dim F} \det \rho_F(\gamma) t^{\dim F''/F}G(F''/F,t^{-1})(\gamma)t^{\dim (F')^*/F^*}G((F')^*/F^*,t^{-1})(\gamma). 
\]
By Proposition~\ref{p:symmetry}, the latter expression is equal to 
\begin{align*}
\sum_{  \substack{ \gamma \cdot F = F  \\ F' \subseteq F \subseteq F''         }}     (-1)^{\dim F} \det \rho_F(\gamma) 
&\sum_{  \substack{ \gamma \cdot H'' = H'' \\ F \subseteq H'' \subseteq F''         }}  \det(tI - \rho_{F''/H''}(\gamma)) G( H''/F,t)(\gamma) \times \\
&\sum_{  \substack{ \gamma \cdot H' = H' \\ F' \subseteq H' \subseteq F        }}  \det(tI - \rho_{ (F')^*/(H')^*}(\gamma)) G( (H')^*/ F^*,t)(\gamma). 
\end{align*}
Rearranging gives
\[
\sum_{ F' \subseteq H' \subseteq H'' \subseteq F''   } \det(tI - \rho_{F''/H''}(\gamma))\det(tI - \rho_{ (F')^*/(H')^*}(\gamma)) \Phi_{H',H''}(t).
\]
By induction and Lemma~\ref{l:ratio}, the latter sum equals 
\[
 \Phi_{F',F''}(t) + \det(tI - \rho_{F''/F'}(\gamma)) \sum_{  \substack{ \gamma \cdot H = H \\ F' \subseteq H \subseteq F''        }  } (-1)^{\dim H} \det \rho_{H}(\gamma).
\]
By Lemma~\ref{l:mobius}, this simplifies to  $\Phi_{F',F''}(t)$, as desired.

\excise{
If $\Phi(t)$ denotes the left hand side of the equation above, then, by definition, the degree of $\Phi(t)$ is bounded by $d/2$.  Hence it will be enough to show that $\Phi(t) = t^{d  + 1}\Phi(t^{-1})$. For any $\gamma \in \Gamma$, we compute that $t^{d  + 1}\Phi(t^{-1})(\gamma)$ equals 
\[
 \sum_{\gamma \cdot F = F} (-1)^{\dim F} \det \rho_F(\gamma) t^{\dim F}G(F,t^{-1})(\gamma)t^{\dim F^*}G(F^*,t^{-1})(\gamma) 
 \]
 Using Proposition~\ref{p:symmetry} and Lemma~\ref{l:ratio}, the latter sum equals 
\begin{align*}
\sum_{\gamma \cdot F = F} (-1)^{\dim F} \det \rho_F(\gamma) 
  &\sum_{       \substack{\gamma \cdot F' = F' \\ F' \subseteq F}  } \frac{ \det(tI - \rho_{F}(\gamma))G(F',t) }{  
  \det(tI - \rho_{F'}(\gamma)) } \times \\
  &  \sum_{       \substack{\gamma \cdot F'' = F'' \\ F \subseteq F''}  } \frac{ \det(tI - \rho_{F''}(\gamma))G((F'')^*,t) }{  
  \det(tI - \rho_{F}(\gamma)) }.
\end{align*}
By Lemma~\ref{l:mobius}, we get
\[
\sum_{  \substack{     \gamma \cdot F' = F'       \\     \gamma \cdot F'' = F''       }   }
\frac{             \det(tI - \rho_{F''}(\gamma))  (-1)^{\dim F''} \det \rho_{F''}(\gamma) G(F',t) G((F'')^*,t)               }
{            \det(tI - \rho_{F'}(\gamma))                      } \alpha(F', F''), 
\]
where 
\[
\alpha(F', F'') = \sum_{  \substack{ \gamma \cdot F = F  \\ F' \subseteq F \subseteq F'' }}  \mu_\gamma(F, F'') =   \left\{\begin{array}{cl}
1 & \text{if } F' = F''  \\ 0 & \text{otherwise}. \end{array}\right. .
\]
We conclude that $  t^{d  + 1}\Phi(t^{-1})(\gamma) = \Phi(t)(\gamma)$, as desired. 
}
\end{proof}


\section{The equivariant $\tilde{S}$-polynomial}\label{s:tilde}

The $\tilde{S}$-polynomial of a lattice polytope was introduced by Borisov and Mavlyutov in \cite[Definition~5.3]{BMString}.
The goal of this section is to use the results of \cite{YoEquivariant} to introduce and study a representation-theoretic analogue of the $\tilde{S}$-polynomial. 

We continue with the notations of the previous section, and further assume that $C$ is the cone over a $d$-dimensional, $\Gamma$-invariant lattice polytope $P$, and $\rho: \Gamma \rightarrow \GL_{d + 1}(\Z)$ is an integer-valued representation. 
If $F$ is a face of $C$, then recall that $\Gamma_F$ denotes the stabilizer of $F$, with complex representation ring $R(\Gamma_F)$.  
For each 
face $F$ of $C$
and non-negative integer $m$, let $\chi_{F,m}$ denote the permutation representation of $\Gamma_F $ on the lattice points in $F \cap mP$.  Following \cite{YoEquivariant}, consider the power series of virtual representations $\phi_F[t]  = \sum_{i \ge 0} \phi_{F,i} t^i \in R(\Gamma_F)[[t]]$ defined by the equation  
\begin{equation}\label{e:Ehrhart}
\sum_{m \ge 0} \chi_{F,m} t^m = \frac{\phi_{F}[t]}{\det[I - \rho_F t]} \in R(\Gamma_F)[[t]],
\end{equation}
where $\det[I - \rho_F t]$ is defined by \eqref{e:determinant}. Observe that $\phi_F[t] = 1 \in R(\Gamma)$ when $F = \{ 0 \}$. 
For each non-zero face $F$, evaluating the characters of the terms of the above equation at $1 \in \Gamma_F$ yields
\[
\sum_{m \ge 0} f_{F \cap P}(m) t^m = \frac{ h^*_{F \cap P}(t)}{ (1 - t)^{\dim F} },
\] 
where $f_{F \cap P}(m)$ is the \define{Ehrhart polynomial} of $F \cap P$, with degree $\dim (F \cap P)$, and $h^*_{F \cap P}(t)$ is the \define{$h^*$-polynomial} of $F \cap P$, with degree at most $\dim (F \cap P)$ (see, for example, \cite{BRComputing}). In particular, if each $\phi_{F,i}$ is a representation, then $\phi_{F,i}(1)$ equals the dimension of the representation $\phi_{F,i}$, and hence
$\phi_F[t]$ is a polynomial of degree at most $d$. 

\begin{remark}\label{r:geometric1}
In subsequent sections, we will restrict attention to certain geometric cases. More specifically, we will assume that there exists a non-degenerate, $\Gamma$-invariant hypersurface with Newton polytope $P$. In this case, it is proved in \cite[Corollary~6.6]{YoRepresentations} that $\phi_{F,i}$ is a representation of $\Gamma_F$. 
\end{remark}

In fact, for each $\gamma \in \Gamma$, $\phi_F[t](\gamma)$ is a rational function in $t$ \cite[Lemma~6.3]{YoEquivariant}, and we have the following equivariant version of Ehrhart reciprocity. For each non-zero face $F$ of $C$ and positive integer $m$, let $\chi_{F,m}^\circ$ denote the permutation representation of $\Gamma_F $ on the lattice points in $\Int(F) \cap mP$, where $\Int(F)$ denotes the relative interior of $F$. Then Corollary~6.6 in \cite{YoEquivariant} states that
\begin{equation}\label{e:reciprocity}
\sum_{m \ge 1} \chi_{F,m}^\circ t^m = \frac{ t^{\dim F} \phi_{F}[t^{-1}] }{\det[I - \rho_F t]}  \in R(\Gamma_F)[[t]].
\end{equation}

\begin{corollary}\label{c:technical}
With the notation above, 
\[
\phi_F[t] =  \sum_{[F'] \in F/\Gamma_F}  \Ind^{\Gamma_F}_{\Gamma_{F'}}
\frac{ t^{\dim F'} \phi_{F'}[t^{-1}] \det[I - \rho_F t]}{\det[I - \rho_{F'} t]},
\]
where $F/\Gamma_F$ denotes the set of $\Gamma_F$-orbits of faces of $F$.
\end{corollary}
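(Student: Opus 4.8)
The plan is to prove the identity by working one character at a time: fix $\gamma\in\Gamma$ and verify the scalar identity obtained by evaluating both sides at $\gamma$, since a virtual representation is determined by its character. After evaluating, the left side becomes $\phi_F[t](\gamma)$, and the induced representations on the right unpack (by the standard formula for the character of an induced representation, or more cleanly by noting that a sum over orbits of an induced class function equals the corresponding sum over all $\gamma$-invariant faces) into
\[
\phi_F[t](\gamma) = \sum_{\substack{\gamma\cdot F' = F' \\ F'\subseteq F}} \frac{t^{\dim F'}\phi_{F'}[t^{-1}](\gamma)\,\det(tI-\rho_F(\gamma))\,t^{-\dim F}}{\det(tI-\rho_{F'}(\gamma))\,t^{-\dim F'}},
\]
after rewriting $\det[I-\rho_F t](\gamma) = \det(I - \rho_F(\gamma)t) = t^{\dim F}\det(t^{-1}I - \rho_F(\gamma))$ and similarly for $F'$; so it suffices to prove this purely numerical identity in $\Q(t)$ for each fixed $\gamma$.

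First I would pass from the face poset of $C$ to the poset $B_\gamma$ of $\gamma$-invariant faces, exactly as in Lemma~\ref{l:mobius}, so that all sums range over $B_\gamma$, which is Eulerian. Then the natural strategy is Möbius inversion. Multiply \eqref{e:Ehrhart} and \eqref{e:reciprocity} into a single bookkeeping device: the generating function $\sum_{m\ge 0}\chi_{F,m}t^m$ decomposes, by partitioning lattice points of $F\cap mP$ according to the unique face in whose relative interior they lie, as $\sum_{F'\subseteq F}\sum_{m\ge 0}\chi^\circ_{F',m}t^m$ (with the convention handling $F'=\{0\}$ and $m=0$). Evaluating at $\gamma$, restricting to $\gamma$-invariant $F'$, and substituting \eqref{e:Ehrhart} on the left and \eqref{e:reciprocity} on the right gives
\[
\frac{\phi_F[t](\gamma)}{\det(I-\rho_F(\gamma)t)} = \sum_{\substack{\gamma\cdot F'=F'\\ F'\subseteq F}} \frac{t^{\dim F'}\phi_{F'}[t^{-1}](\gamma)}{\det(I-\rho_{F'}(\gamma)t)}.
\]
Clearing the denominator $\det(I-\rho_F(\gamma)t)$ yields precisely the claimed identity, so in fact no Möbius inversion is even needed — the corollary is just the ``interior decomposition'' of the Ehrhart series combined with equivariant reciprocity, re-expressed via the $\phi$ normalization.

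The one genuine subtlety — and the step I expect to require the most care — is the combinatorial decomposition $F\cap mP = \bigsqcup_{F'\subseteq F}\bigl(\Int(F')\cap mP\bigr)$ at the level of $\Gamma_F$-representations, together with checking that after evaluating at $\gamma$ only the $\gamma$-invariant faces $F'$ contribute, and that the orbit-sum with $\Ind$ in the statement matches the face-sum after evaluation. The face decomposition is clear set-theoretically, and $\Gamma_F$ permutes the faces $F'\subseteq F$ compatibly, so $\chi_{F,m} = \sum_{[F']\in F/\Gamma_F}\Ind_{\Gamma_{F'}}^{\Gamma_F}\chi^\circ_{F',m}$ as virtual (indeed honest) representations; summing against $t^m$ and applying \eqref{e:Ehrhart} and \eqref{e:reciprocity} (both of which are stated earlier and may be used freely) finishes it. I would also remark that one must track the edge conventions: $\phi_{\{0\}}[t]=1$, $\det[I-\rho_{\{0\}}t]=1$, and the $m=0$ term of $\chi^\circ_{\{0\},m}$, so that the $F'=\{0\}$ summand contributes exactly $\det[I-\rho_F t]$, consistent with both displayed formulas. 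Once these conventions are pinned down, the corollary follows by reading \eqref{e:reciprocity} ``backwards'' through the face stratification.
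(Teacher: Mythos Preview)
Your proposal is correct and follows essentially the same argument as the paper's own proof: evaluate at a fixed $\gamma$, divide through by $\det(I-\rho_F(\gamma)t)$, apply equivariant reciprocity \eqref{e:reciprocity} on the right, and identify both sides with the count of $\gamma$-fixed lattice points via the relative-interior decomposition $F\cap mP=\bigsqcup_{F'\subseteq F}\Int(F')\cap mP$. The M\"obius-inversion detour you mention and then abandon is indeed unnecessary, and your handling of the $F'=\{0\}$ boundary term matches the paper's convention.
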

\begin{proof}
The result holds by definition when $F = \{ 0 \}$. Hence we may assume that $F$ is nonzero. 
For each $\gamma \in \Gamma_F$, we need to show that 
\[
\phi_F[t](\gamma) =  \sum_{  \substack{\gamma \cdot F' = F' \\ F' \subseteq F }}  
\frac{ t^{\dim F'} \phi_{F'}[t^{-1}](\gamma) \det(I - \rho_F(\gamma) t)}{\det(I - \rho_{F'}(\gamma) t)}.
\]
After dividing both sides by $\det(I - \rho_F(\gamma) t)$ and applying \eqref{e:reciprocity}, we need to show that 
\[
\sum_{m \ge 0} \chi_{F,m}(\gamma) t^m = 1 +  \sum_{  \substack{\gamma \cdot F' = F' \\ \{ 0 \} \ne F' \subseteq F }}  
\sum_{m \ge 1} \chi_{F',m}^\circ(\gamma) t^m .
\]
For each positive integer $m$, the coefficient of $t^m$ on both sides of the above equation equals the number of $\gamma$-fixed lattice points in $F \cap mP$. 
\end{proof}

We now introduce our representation-theoretic version of the $\tilde{S}$-polynomial of $P$, which restricts to the usual $\tilde{S}$-polynomial when $\Gamma$ is trivial. 
Recall that there is an inclusion reversing bijection between the faces $F$ of $C$ and the faces $F^*$ of 
 the dual cone $\check{C}$. 

\begin{definition}\label{d:stilde}
With the notation above,
\[
\tilde{S}_\Gamma(C,t) = \tilde{S}_{P, \Gamma}(C,t) = \sum_{[F] \in C/\Gamma} (-1)^{d + 1 - \dim F} \Ind^{\Gamma}_{\Gamma_F} \det(\rho_F)
\phi_F[t] G(F^*,t) \in R(\Gamma)[[t]],
\]
where $C/\Gamma$ denotes the set of $\Gamma$-orbits of faces of $C$, and $G(F^*,t)$ is defined by
 Definition~\ref{d:hg}.
\end{definition}

\begin{remark}\label{r:geometric2}
As in Remark~\ref{r:geometric1}, in subsequent sections we will assume that there exists a non-degenerate, $\Gamma$-invariant hypersurface with Newton polytope $P$. In this case, $\tilde{S}_\Gamma(t)$ is a polynomial of degree $d$, and its coefficients are representations (rather then virtual representations) of $\Gamma$ (see Remark~\ref{r:Stilde}). 
\end{remark}

\begin{example}
Suppose that $P$ is a simplex i.e. $P$ has precisely $d + 1$ vertices $\{ v_0, \ldots, v_d \} \subseteq C$.  
Then $\tilde{S}_\Gamma(C,t)$ has a concrete description as a graded permutation representation \cite[Corollary~8.1]{YoRepresentations}. More precisely, the coefficient of $t^m$ in $\tilde{S}_\Gamma(t)$ equals the permutation representation of $\Gamma$ acting on the lattice points $v$ in $\Int(mP)$ which can be written in the form $v = \sum_{i = 0}^{d} \alpha_i v_i$ for some $0 < \alpha_i < 1$.
\end{example}

When $\Gamma$ is trivial, the following lemma is proved in \cite[Remark~5.4]{BMString}.

\begin{lemma}\label{l:symmetry}
With the notation above, 
\[
\tilde{S}_\Gamma(C,t) = t^{d + 1}\tilde{S}_\Gamma(C,t^{-1}).  
\]
\end{lemma}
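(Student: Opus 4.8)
The plan is to prove the functional equation $\tilde S_\Gamma(C,t)=t^{d+1}\tilde S_\Gamma(C,t^{-1})$ character-by-character, i.e. to fix $\gamma\in\Gamma$ and show the identity for the virtual character evaluated at $\gamma$. After this reduction, $\tilde S_\Gamma(C,t)(\gamma)$ becomes a sum over $\gamma$-invariant faces $F$ of $C$, with summands $(-1)^{d+1-\dim F}\det\rho_F(\gamma)\,\phi_F[t](\gamma)\,G(F^*,t)(\gamma)$. First I would compute $t^{d+1}\tilde S_\Gamma(C,t^{-1})(\gamma)$ directly: using Corollary~\ref{c:technical} to rewrite $\phi_F[t](\gamma)$ as a sum over subfaces $F'\subseteq F$, and Proposition~\ref{p:symmetry} (in the form $t^{\dim F^*}G(F^*,t^{-1})(\gamma) = \dots$, a sum over faces of $\check C$ containing $F^*$, equivalently over $\gamma$-invariant faces $F''\supseteq F$ of $C$), together with Lemma~\ref{l:once} to convert the $t^{\dim F}(-t)^{-\dim}$ factors into $\det\rho_\bullet(\gamma)$ factors. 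The signs $(-1)^{d+1-\dim F}$ should recombine, via Lemma~\ref{l:mobius}, into Möbius-function coefficients $\mu_\gamma(\cdot,\cdot)$ on the poset $B_\gamma$.

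Concretely, after substituting and rearranging the triple sum, I expect to arrive at an expression indexed by chains $F'\subseteq F\subseteq F''$ of $\gamma$-invariant faces, in which the inner sum over the ``middle'' face $F$ (for fixed $F'$ and $F''$) collapses by the defining recursion of the Möbius function: $\sum_{F'\subseteq F\subseteq F''}\mu_\gamma(F,F'')$ (or the dual sum) equals $1$ if $F'=F''$ and $0$ otherwise — exactly the mechanism used at the end of the proof of Proposition~\ref{p:symmetry} and in Lemma~\ref{l:convolution}. The surviving diagonal terms $F'=F''=:F$ then reassemble, again using Lemma~\ref{l:once} and the ratio identity Lemma~\ref{l:ratio} to handle $\det[I-\rho_\bullet t]$ versus $\det[I-\rho_{\bullet^*}t]$, into precisely $\tilde S_\Gamma(C,t)(\gamma)$. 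Since this holds for every $\gamma$, the identity of virtual representations follows.

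The main obstacle I anticipate is bookkeeping rather than conceptual: correctly matching the two ``halves'' of the reciprocity — the $\phi_F[t^{-1}]$ part governed by equivariant Ehrhart reciprocity \eqref{e:reciprocity}/Corollary~\ref{c:technical} (which runs over subfaces of $F$) against the $G(F^*,t^{-1})$ part governed by Proposition~\ref{p:symmetry} (which runs over subfaces of $F^*$, i.e. superfaces of $F$) — and ensuring the exponent of $t$ balances, i.e. $\dim F' + \dim F^* + (\text{contribution from }\det\text{ of }\rho_{F}/\rho_{F'}) + \cdots$ sums to $d+1$. Here Lemma~\ref{l:ratio} is essential to see that the cross terms $\det(tI-\rho_{F}(\gamma))/\det(tI-\rho_{F'}(\gamma))$ appearing from the $\phi$-side cancel against $\det(tI-\rho_{(F')^*}(\gamma))/\det(tI-\rho_{F^*}(\gamma))$ from the $G$-side, leaving a clean telescoping. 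A convenient way to organize the whole computation is to mimic the auxiliary quantity $\Phi_{F',F''}(t)$ from the proof of Lemma~\ref{l:convolution}: define the analogous partial sum over the interval $[F',F'']$ in $B_\gamma$, prove the claimed symmetry for it by induction on $\dim F''-\dim F'$, and then specialize $F'=\{0\}$, $F''=C$ to recover the lemma.
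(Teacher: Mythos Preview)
Your plan is essentially correct and would go through, but it is more elaborate than necessary. The paper's proof is a clean two-step argument rather than a triple-sum-plus-M\"obius-collapse: starting from
\[
t^{d+1}\tilde S_\Gamma(C,t^{-1})(\gamma)=\sum_{\gamma\cdot F=F}(-1)^{d+1-\dim F}\det\rho_F(\gamma)\,t^{\dim F}\phi_F[t^{-1}](\gamma)\cdot t^{\dim F^*}G(F^*,t^{-1})(\gamma),
\]
it expands \emph{only} the $G$-factor via Proposition~\ref{p:symmetry} (together with Lemma~\ref{l:ratio} to pass from the $\rho_{F^*}$/$\rho_{(F')^*}$ ratio to the $\rho_{F'}$/$\rho_F$ ratio), producing a double sum over $F\subseteq F'$. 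After swapping the order of summation and an elementary sign/determinant rearrangement, the inner sum over $F\subseteq F'$ is \emph{exactly} the right-hand side of Corollary~\ref{c:technical} for $F'$, hence equals $\phi_{F'}[t](\gamma)$. The outer sum is then $\tilde S_\Gamma(C,t)(\gamma)$ on the nose.

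In other words, the two identities you cite---Proposition~\ref{p:symmetry} for $G$ and Corollary~\ref{c:technical} for $\phi$---are precisely complementary: expanding one produces a sum that the other collapses directly, so no explicit M\"obius inversion, no Lemma~\ref{l:convolution}-style induction on $\dim F''-\dim F'$, and no triple sum are needed. Your route (expand both factors, obtain a chain $F'\subseteq F\subseteq F''$, collapse the middle via $\sum_{F'\subseteq F\subseteq F''}(-1)^{\dim F}\det\rho_F(\gamma)=(-1)^{\dim F'}\det\rho_{F'}(\gamma)\,[F'=F'']$) does work, and the bookkeeping you flag as the main obstacle does balance, but it rederives from scratch a cancellation already packaged into Corollary~\ref{c:technical}.
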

\begin{proof}
By Proposition~\ref{p:symmetry} and Lemma~\ref{l:ratio}, for any $\gamma \in \Gamma$, we compute 
\begin{align*}
 &t^{d + 1}\tilde{S}_\Gamma(C,t^{-1})(\gamma) =   \sum_{\gamma \cdot F = F} (-1)^{d + 1 - \dim F} 
 \det \rho_F(\gamma)
t^{\dim F}\phi_F[t^{-1}](\gamma) t^{\dim F^*}G(F^*,t^{-1})(\gamma)
  \\
  &= \sum_{\gamma \cdot F = F} (-1)^{d + 1 - \dim F}  \det \rho_F(\gamma)
t^{\dim F}\phi_F[t^{-1}](\gamma) \sum_{  \substack{\gamma \cdot F' = F' \\ F \subseteq F' }}  
\frac{ \det(tI - \rho_{F'}(\gamma)) G((F')^*,t)(\gamma) }{  \det(tI - \rho_{F}(\gamma))  } \\
&= \sum_{\gamma \cdot F' = F'} (-1)^{d + 1 - \dim F'}  \det \rho_{F'}(\gamma) G((F')^*,t)(\gamma)
\sum_{  \substack{\gamma \cdot F = F \\ F \subseteq F' }}  
\frac{ t^{\dim F}\phi_F[t^{-1}](\gamma)  \det(I - \rho_{F'}(\gamma)t)  }{  \det(I - \rho_{F}(\gamma)t)  }.
\end{align*}
The latter sum equals $\tilde{S}_\Gamma(C,t)(\gamma)$ by Corollary~\ref{c:technical}.  
\end{proof}


\section{Equivariant Hodge-Deligne polynomials of  hypersurfaces of tori}\label{s:formula}

In this section, we prove an explicit formula for the equivariant Hodge-Deligne polynomial of a $\Gamma$-invariant, non-degenerate hypersurface $X^\circ$ in a torus. When $\Gamma$ is trivial, this reduces to a reformulation of  Borisov and Mavlyutov \cite[proof of Proposition~5.5]{BMString} of a formula due to Batyrev and Borisov \cite[Theorem~3.24]{BBMirror}. In the case when $C$ is the cone over a simple polytope, a formula was given in \cite[Theorem~7.1]{YoRepresentations}. 

Let $\Gamma$ be a finite group acting algebraically on a  complex variety $Z$. Then the \define{equivariant Hodge-Deligne polynomial} $E_{\Gamma}(Z;u,v) \in R(\Gamma)[u,v]$ is a polynomial of virtual representations first considered in \cite[Section~5]{YoRepresentations}, 
and satisfying the following properties:
\begin{enumerate}
\item\label{e:prop1} If $Z$ is complete with at worst quotient singularities, then 
\[
E_{\Gamma}(Z;u,v) = \sum_{p,q} (-1)^{p + q} H^{p,q}(Z) u^p v^q,
\]
where $H^{p,q}(Z)$ is the $(p,q)^{\textrm{th}}$ piece of the complex cohomology of $Z$, regarded as a $\Gamma$-module.
\item If $U$ is a $\Gamma$-invariant, open subvariety of $Z$, then 
\[
E_{\Gamma}(Z) = E_{\Gamma}(U) + E_{\Gamma}( Z \setminus U).  
\]
\item If $\Gamma$ acts on a complex variety $Z'$, then 
\[
E_{\Gamma}(Z \times Z') =  E_{\Gamma}(Z)E_{\Gamma}(Z').
\]
\end{enumerate}

We refer the reader to Section~5 in \cite{YoRepresentations} for more details, including the definition of 
$E_{\Gamma}(Z;u,v)$ in terms of the action of $\Gamma$ on the mixed Hodge structure of the complex cohomology $H^*_c (Z)$ of $Z$ with compact support. We will need the following examples and facts.

\begin{example}\label{e:affine}
If $\Gamma$ acts algebraically on $\A^d$, then $E_\Gamma(\A^d) = H^d_c(\A^d)(uv)^d = (uv)^d$. 
\end{example}

\begin{example}\label{e:tori}\cite[Example~5.4]{YoRepresentations}
If $\Gamma$ acts linearly on a lattice $M$ of rank $d$ via a representation $\rho': \Gamma \rightarrow GL(M)$, then $\Gamma$ acts algebraically on the corresponding torus 
$T = \Spec \C[M]$, and, with the notation of \eqref{e:determinant}, 
\[
E_\Gamma(T;u,v) = \sum_{k = 0}^{d} (-1)^{d + k}  (\bigwedge^{d - k} \rho') \:  (uv)^k = \det[uvI - \rho'].
\]
\end{example}

\begin{proposition}\cite[Proposition 2.3]{LehRational}\label{p:induced}
Suppose a finite group $\Gamma$ acts a complex variety $Z$, and $Z$ admits a decomposition into locally closed subvarieties $Z = \coprod_{i \in I} Z_i$ which are permuted by $\Gamma$.  Then 
\[
E_\Gamma (Z) = \sum_{\iota \in I /\Gamma} \Ind_{\Gamma_i}^\Gamma E_{\Gamma_i}(Z_i),
\]
where $I/\Gamma$ denotes the set of orbits of $\Gamma$ acting on $I$, $i$ denotes a representative of the orbit $\iota$, and $\Gamma_i$ denotes the isotropy group of $i$ in $I$. 
In terms of characters, for any $\gamma$ in $\Gamma$, 
\[
E_\Gamma (Z)(\gamma) = \sum_{\gamma \cdot Z_i = Z_i} E_{\Gamma_i}(Z_i)(\gamma).
\]
\end{proposition}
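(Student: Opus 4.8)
The plan is to reduce everything to the defining properties of the equivariant Hodge-Deligne polynomial, which is built from the action of $\Gamma$ on the mixed Hodge structure of $H^*_c(Z)$, together with the standard behaviour of induction for permutation actions. First I would recall that $E_\Gamma(Z;u,v)$ is defined so that it is additive over $\Gamma$-invariant locally closed decompositions; since the pieces $Z_i$ are merely permuted by $\Gamma$ (not individually invariant), the natural move is to group them into $\Gamma$-orbits. For a single orbit $\iota \in I/\Gamma$ with representative $i$ and isotropy group $\Gamma_i$, the union $Z_\iota := \coprod_{j \in \iota} Z_j$ is a $\Gamma$-invariant locally closed subvariety, and $Z = \coprod_{\iota \in I/\Gamma} Z_\iota$ is a $\Gamma$-invariant decomposition. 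By additivity, $E_\Gamma(Z) = \sum_{\iota} E_\Gamma(Z_\iota)$, so it suffices to prove the formula orbit by orbit, i.e.\ to show $E_\Gamma(Z_\iota) = \Ind_{\Gamma_i}^\Gamma E_{\Gamma_i}(Z_i)$.

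The key step is the identity $E_\Gamma(Z_\iota) = \Ind_{\Gamma_i}^\Gamma E_{\Gamma_i}(Z_i)$ for an induced space. Here $Z_\iota$ is, as a $\Gamma$-variety, precisely the induced variety $\Gamma \times_{\Gamma_i} Z_i$: it is a disjoint union of $[\Gamma : \Gamma_i]$ copies of $Z_i$ indexed by $\Gamma/\Gamma_i$, permuted by $\Gamma$ in the evident way, with each copy stabilized by a conjugate of $\Gamma_i$. On compactly supported cohomology this gives a $\Gamma$-equivariant isomorphism of mixed Hodge structures $H^*_c(Z_\iota) \cong \Ind_{\Gamma_i}^\Gamma H^*_c(Z_i)$, where the induction is taken in each Hodge-Deligne bidegree separately (induction is exact and commutes with the Hodge and weight filtrations because $\Gamma$ merely permutes the summands, each carrying its own MHS). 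Passing to the alternating sum that defines $E_\Gamma$, and using that $\Ind_{\Gamma_i}^\Gamma$ is an additive (hence $\Z[u,v]$-linear) homomorphism $R(\Gamma_i)[u,v] \to R(\Gamma)[u,v]$, yields $E_\Gamma(Z_\iota) = \Ind_{\Gamma_i}^\Gamma E_{\Gamma_i}(Z_i)$. Summing over $\iota \in I/\Gamma$ gives the first displayed formula.

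For the character version I would invoke the standard formula for an induced character: for $\gamma \in \Gamma$,
\[
\bigl(\Ind_{\Gamma_i}^\Gamma \psi\bigr)(\gamma) = \frac{1}{|\Gamma_i|} \sum_{\substack{g \in \Gamma \\ g^{-1}\gamma g \in \Gamma_i}} \psi(g^{-1}\gamma g).
\]
Applied to the geometric situation, the terms on the right correspond exactly to the copies of $Z_i$ inside $Z_\iota$ that are fixed (setwise) by $\gamma$: a coset $g\Gamma_i$ gives a $\gamma$-fixed copy iff $g^{-1}\gamma g \in \Gamma_i$, and on that copy $\gamma$ acts as $g^{-1}\gamma g$ acts on $Z_i$. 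One checks that each $\gamma$-fixed copy $Z_j$ in $Z_\iota$ has isotropy group $\Gamma_j$ conjugate to $\Gamma_i$, with $E_{\Gamma_j}(Z_j)(\gamma)$ equal to $E_{\Gamma_i}(Z_i)(g^{-1}\gamma g)$ for the corresponding $g$; collecting identical contributions cancels the $1/|\Gamma_i|$ against the size of $\Gamma_i \cap (\text{centralizer})$, and the result is $\sum_{\gamma \cdot Z_i = Z_i} E_{\Gamma_i}(Z_i)(\gamma)$ after summing over orbits. The main obstacle is the first identity: one must be careful that the isomorphism $H^*_c(Z_\iota) \cong \Ind_{\Gamma_i}^\Gamma H^*_c(Z_i)$ really is an isomorphism of \emph{graded} $\Gamma$-equivariant mixed Hodge structures, so that it survives the passage to $(p,q)$-components; once that is granted, the rest is bookkeeping with induced characters. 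Since this proposition is quoted from \cite[Proposition~2.3]{LehRational}, I would simply cite that reference for the details and present the argument above as a sketch.
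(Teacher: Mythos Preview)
The paper does not prove this proposition at all: it is stated with the citation \cite[Proposition~2.3]{LehRational} and no proof environment follows. Your sketch is a correct outline of the standard argument (group the strata into $\Gamma$-orbits, use additivity over the resulting $\Gamma$-invariant decomposition, and identify $H^*_c$ of a single orbit with the induced representation), and you already note at the end that one should simply cite Lehrer's paper; that is exactly what the author does.
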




We continue with the notations of the previous sections, and assume that $C$ is the cone over a $d$-dimensional, $\Gamma$-invariant lattice polytope $P$, and $\rho: \Gamma \rightarrow \GL_{d + 1}(\Z)$ is an integer-valued representation. 
 After possibly replacing $\Z^{d + 1}$ with a smaller lattice, we may assume that $\Z^{d + 1}$ is generated by lattice points in the affine span
$\aff(P)$ of $P$. If $M$ denotes a translate of $\aff(P) \cap \Z^{d + 1}$ to the origin, then we have an induced representation $\rho': \Gamma \rightarrow GL(M)$, such that the representation $\rho: \Gamma \rightarrow GL_{d + 1}(\R)$ is isomorphic, as a complex representation, to the direct sum of $\rho'$ and the trivial representation (see Section~2 in \cite{YoRepresentations} for details). Note that $\Gamma$ acts algebraically on the corresponding torus $T = \Spec \C[M]$. 
 If $u \in M$ corresponds to the monomial $\chi^u \in \C[M]$, then a hypersurface $X^\circ = \{ \sum_{ u \in P \cap M} a_u \chi^u = 0 \} \subseteq T$ defines a $\Gamma$-invariant hypersurface of $T$ if and only if $a_u = a_u' \in \C$ whenever $u$ and $u'$ lie in the same $\Gamma$-orbit of $P \cap M$.  The hypersurface
$X^\circ$ is  \define{non-degenerate} with respect to $P$ if $P$
 is the convex hull of $\{ u \in M \mid a_u \ne 0 \}$ in $M_\R$, and $\{ \sum_{ u \in Q \cap M} a_u \chi^u = 0 \}$ defines a smooth (possibly empty) hypersurface in $T$ for each face $Q$ of $P$. We refer the reader to Section~7 in \cite{YoEquivariant} for a discussion of the existence of non-degenerate, $\Gamma$-invariant hypersurfaces.

For the remainder of the section, $X^\circ$ will denote a $\Gamma$-invariant hypersurface of $T$ which is non-degenerate with respect to $P$. 
We define $E(C) = E(C;u,v)$ to be the expression
\begin{equation*}
(1/uv)[E_\Gamma(T)  + (-1)^{d + 1} \det(\rho) \sum_{[F] \in C/\Gamma}  u^{\dim F}
\Ind^{\Gamma}_{\Gamma_F} \det(\rho_F)
 \tilde{S}_{\Gamma_F}(F,u^{-1}v)G(C/F,uv)].
\end{equation*}
Here $C/\Gamma$ denotes the set of $\Gamma$-orbits of faces of $C$, and $C/F$ denotes the image of $C$ in the quotient of $\R^{d + 1}$ by the linear span of $F$. By Example~\ref{e:tori},
\[
E_\Gamma(T;u,v) = \det[uvI - \rho'] = \frac{\det[uvI - \rho ]}{uv - 1}.
\] 
Our goal is to prove that $E(C)$ equals the equivariant Hodge-Deligne polynomial $E_\Gamma(X^\circ)$ of $X^\circ$.

\begin{remark}\label{r:toric}
 The action of $\Gamma$ on the $\N$-graded, semi-group algebra $R = \C[C \cap \Z^{d + 1}]$ induces an action of $\Gamma$ on the  projective toric variety  $Y = \Proj R$ with torus $T$ via toric morphisms.
  The closure $X$ of $X^\circ$ in $Y$ is a \emph{non-degenerate}, $\Gamma$-invariant hypersurface in $Y$. 
  General philosophy about non-degenerate hypersurfaces, which was communicated to the author by Khovanski{\u\i}, suggests that the knowledge of 
 the equivariant Hodge-Deligne polynomials of all $\Gamma$-invariant, non-degenerate hypersurfaces of tori should be equivalent to the knowledge of the representations of $\Gamma$ on the intersection cohomology groups of all non-degenerate hypersurfaces of projective toric varieties. Since we will not need this correspondence, we leave this as an open problem, and refer the reader to \cite{BBMirror} for details in the case when $\Gamma$ is trivial. 
\end{remark}

In \cite{YoRepresentations}, the author proved an algorithm to compute  \[
E_\Gamma(X^\circ;u,v) = \sum_{0 \le p,q \le d - 1} e^{p,q}_\Gamma u^p v^q \in R(\Gamma)[u,v],\] extending an algorithm of Danilov and Khovanski{\u\i} in the case when $\Gamma$ is trivial \cite{DKAlgorithm}. Our proof will proceed by  verifying that $E(C)$ satisfies all the steps of the algorithm. 
The algorithm consists of three parts. It first determines  $e^{p,q}_\Gamma$ for $p + q > d - 1$,
 then determines the sums $\sum_q e^{p,q}_\Gamma$, and lastly determines $e^{p,q}_\Gamma$ for $p + q < d - 1$. 
We refer the reader to Section~6 in \cite{YoRepresentations} for details.


\define{Step~1} \cite[Section~6.1]{YoRepresentations}

The first part of the algorithm states that for $p + q > d - 1$, $e^{p,q}_\Gamma$ equals the coefficient of 
$u^p v^q$ in $\frac{E_\Gamma(T)}{uv}$.

Since the degree of $G(C/F,t)$ is bounded by $\frac{d + 1 - \dim F}{2}$ by definition, the total degree in 
$u$ and $v$ of the right hand side in the expression for $E(C)$  is bounded by
$d - 1$. Hence, for $p + q > d - 1$, the coefficient of $u^p v^q$ in  $E(C)$ equals the coefficient of 
$u^p v^q$ in $\frac{E_\Gamma(T)}{uv}$. 

\begin{remark}\label{r:Stilde}
If $p + q = d - 1$, then the coefficient of $u^p v^q$ in  the right hand side of the expression for $E(C)$ equals
$(-1)^{d + 1}$ times the coefficient of $t^{q + 1}$ in $\tilde{S}_\Gamma(C,t)$.  It follows from Theorem~\ref{t:formula} and the discussion in Section 6.1 in \cite{YoRepresentations} that the coefficient of $t^{q + 1}$ in $\tilde{S}_\Gamma(C,t)$ is equal to the representation of $\Gamma$ on the $(d - 1 - q,q)^{\textrm{th}}$ piece of the mixed Hodge structure on the primitive cohomology of the middle cohomology $H^{d - 1}_c (X^\circ)$ of $X^\circ$ with compact support. 
\end{remark}

\define{Step~2} \cite[Section~6.3]{YoRepresentations}

The second part of the algorithm states that for every $\gamma$ in $\Gamma$, 
\[
E_\Gamma(X^\circ;u,1) (\gamma) = (1/u)[E_\Gamma(T;u,1)(\gamma) + (-1)^{d + 1}  \det \rho(\gamma) \phi_{C}[u](\gamma)], 
\]
where $\phi_{C}[u]$ is defined by \eqref{e:Ehrhart}.

We want to compute the value of $E(C;u,1)$. That is, consider the expression
\begin{equation*}
(1/u)[ E_\Gamma(T;u,1)  + (-1)^{d + 1} \det(\rho) \sum_{[F] \in C/\Gamma}  u^{\dim F}
\Ind^{\Gamma}_{\Gamma_F}
\det(\rho_F) \tilde{S}_{\Gamma_F}(F,u^{-1})G(C/F,u)].
\end{equation*}

By Lemma~\ref{l:symmetry}, this simplifies to 
\[
(1/u)[E_\Gamma(T;u,1) + (-1)^{d + 1}  \det(\rho)  \sum_{[F] \in C/\Gamma}
\Ind^{\Gamma}_{\Gamma_F}
\det(\rho_F) \tilde{S}_{\Gamma_F}(F,u)G(C/F,u)].
\]
By the definition of $\tilde{S}_{\Gamma_F}(F,u)$, the evaluation of the character of 
\[
\Ind^{\Gamma}_{\Gamma_F}
\det(\rho_F) \tilde{S}_{\Gamma_F}(F,u)G(C/F,u)
\]
 at $\gamma \in \Gamma$ equals
\[ \sum_{\gamma \cdot F = F} \det \rho_F(\gamma)G(C/F,u)(\gamma)
\sum_{ \substack{ \gamma \cdot F' = F'  \\ F' \subseteq F         }  }  
(-1)^{\dim F - \dim F'} \det \rho_{F'}(\gamma) \phi_{F'}[u](\gamma)G((F')^*/F^*,u)(\gamma). 
\]
Rearranging, the latter sum is equal to 
\[
 \sum_{\gamma \cdot F' = F'} \det \rho_{F'}(\gamma)  \phi_{F'}[u](\gamma) 
 \sum_{ \substack{ \gamma \cdot F = F  \\ F' \subseteq F         }  } 
 (-1)^{\dim F - \dim F'} \det \rho_F(\gamma) G(C/F,u)(\gamma)G((F')^*/F^*,u)(\gamma). 
\]
After applying Lemma~\ref{l:convolution} to $C/F'$, this expression equals $\phi_{C}[u](\gamma)$. 
We conclude that 
\[
E(C;u,1)(\gamma) = (1/u)[E_\Gamma(T;u,1)(\gamma) + (-1)^{d + 1}  \det \rho(\gamma) \phi_{C}[u](\gamma)].
\]

\define{Step~3} \cite[Section~6.2]{YoRepresentations}

Let $C'$ be a $\Gamma$-invariant cone over a simple polytope which `refines' $C$ (Danilov and Khovanski{\u\i} use the term `majorizes' in \cite{DKAlgorithm}). That is, $C'$ satisfies the property that every ray is contained in precisely $d$ maximal faces, and there exists a $\Gamma$-equivariant function
\[
f: \{ \textrm{ non-zero faces of } C' \} \rightarrow \{ \textrm{ non-zero faces of } C \}, 
\]
such that:
\begin{enumerate}
\item For every ray $r'$ of $C'$, $f(r')$ is a ray of $C$ and the tangent cone of $C$ with respect to $f(r')$ is contained in the tangent cone of $C'$ with respect to $r'$. 
\item For every non-zero face $F'$ of $C'$, $f(F')$ is the cone 
generated by $\{ f(r') \mid r' \textrm{ is a ray of } F' \}$. 
\end{enumerate}
Geometrically, this corresponds to a projective, $\Gamma$-equivariant, partial resolution $Y' \rightarrow Y$ of the projective toric variety $Y$ determined by $P \subseteq C$.  The third part of the algorithm uses the fact that Poincar\'e duality holds for the closure of $X^\circ$ in $Y'$ in order to
compute $e^{p,q}_\Gamma$ for $p + q < d - 1$ using Step~1 together with induction on dimension. 
 Together with the two previous steps, this reduces the equality
$E_\Gamma(X^\circ) = E(C)$ to proving the following:

Fix $\gamma \in \Gamma$, and set 
\[
E(\bar{C'};u,v)(\gamma) 
=  \sum_{ \substack{ \gamma \cdot F' = F' \\ F' \ne \{ 0 \} }} E(f(F');u,v)(\gamma) 
\frac{ \det(uvI - \rho_{F'}(\gamma) ) }{ \det(uvI - \rho_{f(F')}(\gamma) ) }.
\]
We need to show that $E(\bar{C'};u,v)(\gamma) = (uv)^{d - 1}E(\bar{C'};u^{-1},v^{-1})(\gamma)$. 

\begin{remark}
With the notation above, $E(\bar{C'};u,v) \in R(\Gamma)[u,v]$ is the conjectural equivariant Hodge-Deligne polynomial of the closure of $X^\circ$ in $Y'$, and it remains to verify that the symmetry arising from Poincar\'e duality holds
(see \cite[Section~6.2]{YoRepresentations} for details).
\end{remark}


We will need the following three lemmas. 

\begin{lemma}\label{l:inverse}
\[
(uv)^{d -  1}E(C;u^{-1},v^{-1}) =  \sum_{  \substack{  [F] \in C/\Gamma \\ F \ne \{ 0 \} }} 
\Ind^{\Gamma}_{\Gamma_F} \frac{\det[I - \rho_C uv]}{\det[I - \rho_F uv]} E(F;u,v).
\]
\end{lemma}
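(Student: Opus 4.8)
The plan is to prove the identity one character at a time: fix $\gamma \in \Gamma$ and show that the virtual characters of the two sides, which are rational functions of $u$ and $v$, agree after evaluation at $\gamma$. This is the mode of argument used throughout Section~\ref{s:cones} and in the verification of Steps~1--3, and it makes the $\gamma$-dependent M\"obius function $\mu_\gamma$ of Lemma~\ref{l:mobius} available. Write $w = uv$; recall that $E_\Gamma(T;u,v)(\gamma) = \det(wI-\rho(\gamma))/(w-1)$ depends only on $w$, and note that substituting $u^{-1},v^{-1}$ turns the prefactor $1/w$ in the formula for $E(C)$ into $w$. Thus, after expanding with Definition~\ref{d:stilde}, $(uv)^{d-1}E(C;u^{-1},v^{-1})(\gamma)$ becomes $w^{d}$ times a torus term $\det(w^{-1}I-\rho(\gamma))/(w^{-1}-1)$ plus $w^d$ times a sum over $\gamma$-invariant faces $F$ of $C$ of terms built from $u^{-\dim F}\tilde{S}_{\Gamma_F}(F,uv^{-1})(\gamma)$ and $G(C/F,w^{-1})(\gamma)$.

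First I would dispose of the torus term: Lemma~\ref{l:once} applied to the face $C$ itself rewrites $w^d\det(w^{-1}I-\rho(\gamma))/(w^{-1}-1)$ in terms of $\det\rho(\gamma)$ and $E_\Gamma(T;u,v)(\gamma)$, and this will reproduce precisely the $F=C$ summand on the right, where $\Ind$ and the ratio $\det[I-\rho_C uv]/\det[I-\rho_C uv]$ are trivial and give back $E(C;u,v)(\gamma)$. For the remaining sum I would use the two symmetry results established earlier: Lemma~\ref{l:symmetry} to flip the variable in $u^{-\dim F}\tilde{S}_{\Gamma_F}(F,uv^{-1})(\gamma)$ (so it reappears as $v^{-\dim F}\tilde{S}_{\Gamma_F}(F,u^{-1}v)(\gamma)$) and then its defining Definition~\ref{d:stilde} together with Corollary~\ref{c:technical} to re-expand it over faces $F'\subseteq F$ in terms of $\phi_{F'}$ and $G((F')^*/F^*,\cdot)$, exactly as in the proof of Lemma~\ref{l:symmetry}; and Proposition~\ref{p:symmetry} applied to the quotient cone $C/F$ (of dimension $d+1-\dim F$) to expand $G(C/F,w^{-1})(\gamma)$ as a sum over $\gamma$-invariant faces $F''\supseteq F$ of terms $\det(wI-\rho_{F''/F}(\gamma))G(F''/F,w)(\gamma)/\det(wI-\rho_{C/F''}(\gamma))$, after using $\det(wI-\rho_{C/F}(\gamma))=\det(wI-\rho(\gamma))/\det(wI-\rho_F(\gamma))$ for $\gamma$-invariant $F$ --- this is the same expansion that appears in the proof of Lemma~\ref{l:convolution}.

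What remains is a sum over chains $F'\subseteq F\subseteq F''$ of $\gamma$-invariant faces of $C$ with determinant-polynomial weights; reversing the order of summation, the inner sum over $F$ collapses by the M\"obius identity of Lemma~\ref{l:mobius} (as in Proposition~\ref{p:symmetry}), and the inner sum over $F''$ collapses by Lemma~\ref{l:convolution} applied to a quotient cone $C/F'$ --- the same collapse that produces $\phi_C[u](\gamma)$ in the verification of Step~2. Finally, Lemma~\ref{l:ratio} converts the surviving ratios of determinant polynomials on faces of $C$ into ratios on the dual faces, which, combined with Lemma~\ref{l:once}, match the $\det[I-\rho_C uv]/\det[I-\rho_F uv]$ of the statement; collecting the surviving terms yields $\sum_{\gamma\cdot F=F,\,F\ne\{0\}}\frac{\det(I-\rho(\gamma)w)}{\det(I-\rho_F(\gamma)w)}E(F;u,v)(\gamma)$, the character of the right-hand side. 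The hard part will be purely organizational --- coordinating the three nested reindexings of faces and their duals while correctly tracking the signs $(-1)^{\dim F}$, the characters $\det\rho_F(\gamma)$, the determinant-polynomial ratios, and the powers of $w$, and in particular making sure the excluded face $F=\{0\}$ on the right and the torus term $E_\Gamma(T)$ on the left are accounted for consistently; I do not expect any essentially new idea beyond assembling Lemmas~\ref{l:mobius}, \ref{l:once}, \ref{l:ratio}, \ref{l:symmetry}, \ref{l:convolution} and Proposition~\ref{p:symmetry} in the right order.
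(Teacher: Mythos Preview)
Your plan has a concrete error and misses the one structural observation that makes the paper's proof short. First, the torus term on the left does not ``reproduce precisely the $F=C$ summand on the right.'' The $F=C$ summand on the right is $E(C;u,v)(\gamma)$ in its entirety, including the full sum over faces of $C$, whereas Lemma~\ref{l:once} turns $(uv)^d\det((uv)^{-1}I-\rho(\gamma))/((uv)^{-1}-1)$ only into $(-1)^d\det\rho(\gamma)\,E_\Gamma(T;u,v)(\gamma)$, a single torus term. In the paper's argument this transformed torus term is not matched with any single summand on the right; instead it combines with the $-\det(uvI-\rho_{F'}(\gamma))/(uv-1)$ corrections (see below) and with the $F'=\{0\}$ contribution to produce an expression $\sum_{\gamma\cdot F'=F'}(-1)^{\dim F'}\det\rho_{F'}(\gamma)$ that vanishes by Lemma~\ref{l:mobius}.

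Second, you propose to expand $\tilde S_{\Gamma_F}(F,\cdot)$ via Definition~\ref{d:stilde} and Corollary~\ref{c:technical} down to $\phi_{F'}$'s, and then to reassemble using Lemma~\ref{l:convolution} and Lemma~\ref{l:ratio}. None of that is needed. After applying Lemma~\ref{l:symmetry} and then Proposition~\ref{p:symmetry} to $G(C/F,(uv)^{-1})$, and swapping the order of summation, the inner sum over $\gamma$-invariant $F\subseteq F'$ is
\[
\sum_{\substack{\gamma\cdot F=F\\ F\subseteq F'}} u^{\dim F}\det\rho_F(\gamma)\,\tilde S_{\Gamma_F}(F,u^{-1}v)(\gamma)\,G(F'/F,uv)(\gamma),
\]
and for $F'\ne\{0\}$ this is, by the very definition of $E(F')$, equal to $(-1)^{\dim F'}\det\rho_{F'}(\gamma)\bigl[uv\,E(F';u,v)(\gamma)-\det(uvI-\rho_{F'}(\gamma))/(uv-1)\bigr]$. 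That recognition is the whole point: the $\tilde S$'s are never unpacked. The remaining bookkeeping is then one application of Lemma~\ref{l:once} and one of Lemma~\ref{l:mobius}; Lemmas~\ref{l:ratio} and~\ref{l:convolution} and Corollary~\ref{c:technical} play no role here.
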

\begin{proof}
We expand the left hand side of the above equation as
\begin{align*}
(uv)^{d} [E_\Gamma(T;u^{-1},v^{-1}) + &(-1)^{d + 1} \det(\rho_C) \times \\ 
&\sum_{[F] \in C/\Gamma}  u^{-\dim F}
\Ind^{\Gamma}_{\Gamma_F}
\det(\rho_F) \tilde{S}_{\Gamma_F}(F,uv^{-1})G(C/F,(uv)^{-1})].
\end{align*}
By Lemma~\ref{l:symmetry}, the latter sum equals
\begin{align*}
(uv)^{d}E_\Gamma(T;u^{-1},v^{-1}) + &(-1)^{d + 1} \det(\rho_C)  \times \\
&(uv)^{d}  \sum_{[F] \in C/\Gamma}  v^{-\dim F}
\Ind^{\Gamma}_{\Gamma_F}
\det(\rho_F) \tilde{S}_{\Gamma_F}(F,u^{-1}v)G(C/F,(uv)^{-1}).
\end{align*}
We first consider the term
\[
 \sum_{[F] \in C/\Gamma}  v^{-\dim F}
\Ind^{\Gamma}_{\Gamma_F}
\det(\rho_F) \tilde{S}_{\Gamma_F}(F,u^{-1}v)G(C/F,(uv)^{-1}).
\]
Fix $\gamma$ in $\Gamma$. By Proposition~\ref{p:symmetry}, the evaluation of the corresponding virtual characters at $\gamma$ equals
\[
\frac{1}{(uv)^{d + 1}} \sum_{\gamma \cdot F = F}  u^{\dim F} \det \rho_F(\gamma) \tilde{S}_{\Gamma_F}(F,u^{-1}v)(\gamma)
\sum_{ \substack{  \gamma \cdot F' = F' \\ F \subseteq F'   }  } 
\det(uvI - \rho_{C/F'}(\gamma)) G(F'/F,uv)(\gamma)
\]
\[
= \frac{1}{(uv)^{d + 1}} \sum_{\gamma \cdot F' = F'}  \det(uvI - \rho_{C/F'}(\gamma)) 
\sum_{ \substack{  \gamma \cdot F = F \\ F \subseteq F'   }  } 
u^{\dim F} \det \rho_F(\gamma) \tilde{S}_{\Gamma_F}(F,u^{-1}v)(\gamma)G(F'/F,uv)(\gamma).
\] 
Observing that for $F' \ne \{ 0 \}$, 
\[
\sum_{ \substack{  \gamma \cdot F = F \\ F \subseteq F'   }  } 
u^{\dim F} \det \rho_F(\gamma) \tilde{S}_{\Gamma_F}(F,u^{-1}v)(\gamma)G(F'/F,uv)(\gamma) = 
\]
\[
(-1)^{\dim F'}  \det \rho_{F'}(\gamma) [      uvE(F')(\gamma)            -  \frac{\det(uvI - \rho_{F'}(\gamma) )}{uv - 1}     ],
\]
the above expression simplifies to 
\[
\frac{1}{(uv)^{d + 1}}[   \det(uvI - \rho_{C}(\gamma))  +    \]\[     \sum_{ \substack{  \gamma \cdot F' = F' \\ F' \ne \{ 0 \}   }  }     (-1)^{\dim F'}  \det \rho_{F'}(\gamma)     \det(uvI - \rho_{C/F'}(\gamma))                  [      uvE(F')(\gamma)            -  \frac{\det(uvI - \rho_{F'}(\gamma) )}{uv - 1}     ]             ].
\]
Also, Lemma~\ref{l:once} implies that 
\[
(uv)^{d}E_\Gamma(T;u^{-1},v^{-1})(\gamma)  =  (uv)^{d} \frac{ \det ((uv)^{-1} I - \rho_C(\gamma))  }{(uv)^{-1} - 1} =  \frac{ (-1)^d \det  \rho_C(\gamma) \det (uv I - \rho_C(\gamma))  }{uv - 1} . 
\]

Putting this all together, we deduce that $(uv)^{d -  1}E(C;u^{-1},v^{-1})(\gamma)$ equals
\[
 \frac{ (-1)^d \det  \rho_C(\gamma) \det (uv I - \rho_C(\gamma))  }{uv - 1} + 
 \frac{(-1)^{d + 1} \det \rho_C(\gamma)}{uv}[   \det(uvI - \rho_{C}(\gamma))  +   
 \]
 \[
      \sum_{ \substack{  \gamma \cdot F' = F' \\ F' \ne \{ 0 \}   }  }      (-1)^{\dim F'}    \det \rho_{F'}(\gamma)    \det(uvI - \rho_{C/F'}(\gamma))                  [      uvE(F')(\gamma)            -  \frac{\det(uvI - \rho_{F'}(\gamma) )}{uv - 1}     ]    ].
\]
After rearranging, we obtain
\[
\sum_{ \substack{  \gamma \cdot F' = F' \\ F' \ne \{ 0 \}   }  }   \det(I - \rho_{C/F'}(\gamma)uv)   E(F')(\gamma)  + 
 \frac{ \det (I - \rho_C(\gamma)uv)  }{ uv(1 - uv)}  \sum_{  \gamma \cdot F' = F'  } (-1)^{\dim F'} \det \rho_{F'}(\gamma).  
\]
By Lemma~\ref{l:mobius}, the second term in the above expression is zero, and we conclude that
\[
(uv)^{d -  1}E(C;u^{-1},v^{-1})(\gamma) = \sum_{ \substack{  \gamma \cdot F' = F' \\ F' \ne \{ 0 \}   }  }   \det(I - \rho_{C/F'}(\gamma)uv)   E(F')(\gamma).
\]
\end{proof}

\excise{
We start with an expression
\[
E (\Sigma; u,v) = \sum_{[\tau'] \in \Sigma /\Gamma} \Ind_{\Gamma_{\tau'}}^\Gamma [E(f(\tau'))E_{\Gamma_{\tau'}}( T_{\tau',f})],
\]
where $T_{\tau',f} := \Spec \C[M_{\tau'}/M_{\tau}]$. We need to show that $E (\Sigma; u,v) = (uv)^{d - 1} E (\Sigma; u^{-1},v^{-1})$. Here $f(\tau')$ is a face of $C$. 
Fix $\gamma \in \Gamma$. We compute  

\[
(uv)^{d - 1} E (\Sigma; u^{-1},v^{-1})(\gamma) = (uv)^{d - 1} \sum_{\gamma \cdot \tau' = \tau'} 
E(f(\tau'); u^{-1},v^{-1})(\gamma)E_{\Gamma_{\tau'}}( T_{\tau',f}; u^{-1},v^{-1})(\gamma). 
\]
By Lemma~\ref{l:inverse}, the latter sum is equal to 
\[
 (uv)^{d - 1} \sum_{\gamma \cdot \tau' = \tau'} 
E(f(\tau'); u^{-1},v^{-1})(\gamma)E_{\Gamma_{\tau'}}( T_{\tau',f}; u^{-1},v^{-1})(\gamma). 
\]
}

The following lemma easily follows from the proof of Lemma~\ref{l:mobius}, and the corresponding statement when $\Gamma$ is trivial. 

\begin{lemma}\label{l:mobius2}
 Let $C$ denote the cone over a $\Gamma$-invariant polyhedron $R$, and consider the poset of 
 cones $F_Q$ over $\gamma$-invariant faces $Q$ of $R$, for some $\gamma \in \Gamma$. 
Then the associated M\"obius function satisfies 
\[
\mu_{\gamma,R}(\{ 0 \}, F_{Q}) = 
 \left\{\begin{array}{cl}
(-1)^{\dim F_{Q}} \det \rho_{F_{Q}} (\gamma) & \text{if } Q \textrm{ is bounded }  \\ 0 & \text{otherwise}. \end{array}\right. 
\]
\end{lemma}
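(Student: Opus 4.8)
The plan is to reduce Lemma~\ref{l:mobius2} to the two results it is explicitly said to follow from: the proof technique of Lemma~\ref{l:mobius}, and the classical (non-equivariant) statement for Eulerian posets. First I would fix $\gamma \in \Gamma$ and pass, exactly as in the proof of Lemma~\ref{l:mobius}, to the fixed subspace $L_\gamma = \{ x \mid \gamma \cdot x = x\}$. Intersecting the cone $C$ over $R$ with $L_\gamma$, and intersecting $R$ itself with the appropriate affine slice of $L_\gamma$, one sees that the poset of cones $F_Q$ over $\gamma$-invariant faces $Q$ of $R$ is inclusion-preservingly isomorphic to the poset of cones over the faces of the polyhedron $R_\gamma := R \cap (L_\gamma\text{-slice})$; here one uses the averaging-over-the-$\gamma$-orbit trick to produce a $\gamma$-fixed point in the relative interior of each $\gamma$-invariant face, so that $Q \mapsto Q \cap L_\gamma$ is a bijection onto faces of $R_\gamma$, with $\dim F_{Q_\gamma} = \dim (F_Q)_\gamma$ and $Q$ bounded iff its image $Q_\gamma$ is bounded.

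Next I would invoke the non-equivariant version of the lemma (the cited result of the $\Gamma$-trivial case, essentially Stanley's computation of the Möbius function of the face poset of a polyhedron): for the polyhedron $R_\gamma$, the Möbius function from the minimal element $\{0\}$ to the cone $F_{Q_\gamma}$ over a face $Q_\gamma$ equals $(-1)^{\dim F_{Q_\gamma}}$ if $Q_\gamma$ is bounded and $0$ otherwise. (This is the standard fact that the order complex of the proper part of the face lattice of a polytope is a sphere, whereas for an unbounded face the corresponding interval has a cone point and hence vanishing reduced Euler characteristic.) Since the poset isomorphism above identifies $\mu_{\gamma,R}(\{0\}, F_Q)$ with this quantity, I get $\mu_{\gamma,R}(\{0\}, F_Q) = (-1)^{\dim (F_Q)_\gamma}$ if $Q$ is bounded and $0$ otherwise.

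Finally I would convert the exponent $(-1)^{\dim (F_Q)_\gamma}$ into the stated form. By Lemma~5.5 and Remark~5.6 of \cite{YoEquivariant}, exactly as quoted in the proof of Lemma~\ref{l:mobius}, we have $\det \rho_{F_Q}(\gamma) = (-1)^{\dim F_Q - \dim (F_Q)_\gamma}$, hence $(-1)^{\dim (F_Q)_\gamma} = (-1)^{\dim F_Q} \det \rho_{F_Q}(\gamma)$, which gives the claimed formula in the bounded case and leaves the unbounded case as the zero already obtained.

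The only point requiring a little care — and the step I expect to be the main obstacle — is verifying the bijection and the bounded/unbounded compatibility cleanly: one must check that a face $Q$ of the $\Gamma$-invariant polyhedron $R$ is $\gamma$-invariant precisely when $Q \cap L_\gamma$ is a nonempty face of $R_\gamma$ of the ``same type'' (bounded vs.\ unbounded), and that the slicing by $L_\gamma$ does not create or destroy boundedness. This follows from the fact that $Q$ contains a $\gamma$-fixed point in its relative interior (average over the $\gamma$-orbit of any relative interior point) together with the observation that the recession cone of $Q$ is $\gamma$-invariant and its fixed subspace is the recession cone of $Q \cap L_\gamma$, so $Q$ is bounded iff $Q \cap L_\gamma$ is. Once this dictionary is in place, the rest is a direct transcription of the proof of Lemma~\ref{l:mobius} together with the cited classical fact.
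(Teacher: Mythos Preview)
Your proposal is correct and is exactly the argument the paper has in mind: it explicitly states that the lemma ``easily follows from the proof of Lemma~\ref{l:mobius}, and the corresponding statement when $\Gamma$ is trivial,'' which is precisely the reduction to $R_\gamma$ via the fixed subspace $L_\gamma$ followed by the classical M\"obius computation for face posets of polyhedra that you outline. Your care with the bounded/unbounded compatibility via recession cones is a welcome elaboration of a step the paper leaves implicit.
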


\begin{lemma}\label{l:polyhedron}
 Let $C$ denote the cone over a simple, $\Gamma$-invariant polyhedron $R$, and consider the poset of 
 cones $F_Q$ over $\gamma$-invariant faces $Q$ of $R$, for some $\gamma \in \Gamma$. Then
 \[
 t^{d + 1} \sum_{F_Q \ne \{ 0 \} } \det( t^{-1}I - \rho_{F_Q}(\gamma)) = 
 - \sum_{  \substack{ F_Q \ne \{ 0 \} \\   Q \textrm{ bounded }  } } \det( tI - \rho_{F_Q}(\gamma)). 
 \]
\end{lemma}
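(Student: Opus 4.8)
The plan is to reduce the identity, via Lemma~\ref{l:once}, to a polynomial identity among the characteristic polynomials $\det(tI-\rho_{F_Q}(\gamma))$, and then to evaluate the resulting sum by interchanging the order of summation, using the simplicity of $R$ for one ingredient and the Eulerianness of the face posets of cones (Lemma~\ref{l:mobius}) for another.

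First I would apply Lemma~\ref{l:once} to each summand on the left, rewriting
\[
t^{d+1}\det(t^{-1}I-\rho_{F_Q}(\gamma)) \;=\; (-1)^{\dim F_Q}\det\rho_{F_Q}(\gamma)\,t^{\,d+1-\dim F_Q}\det(tI-\rho_{F_Q}(\gamma)).
\]
The main structural input is that, because $R$ is simple, for every nonzero $\gamma$-invariant face $F_Q$ of $C$ the quotient cone $C/F_Q$ is a $\gamma$-invariant \emph{simplicial} cone of dimension $d+1-\dim F_Q$, whose faces correspond (since $F_Q\neq\{0\}$) precisely to the faces $F_{Q'}$ of $C$ with $Q\subseteq Q'$. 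The computation for simplicial cones from the example following Definition~\ref{d:hg} --- in the form: for a $\gamma$-invariant simplicial cone $\sigma$, one has $\sum_{G}\det(tI-\rho_G(\gamma)) = t^{\dim\sigma}$, the sum being over $\gamma$-invariant faces $G$ of $\sigma$ --- applied to $\sigma=C/F_Q$ then yields
\[
t^{\,d+1-\dim F_Q}\det(tI-\rho_{F_Q}(\gamma)) \;=\; \sum_{\substack{\gamma\cdot Q'=Q'\\ Q\subseteq Q'}}\det(tI-\rho_{F_{Q'}}(\gamma)).
\]
Substituting this and interchanging the two sums, the left-hand side becomes
\[
\sum_{F_{Q'}\neq\{0\}}\det(tI-\rho_{F_{Q'}}(\gamma))\cdot W(Q'),\qquad W(Q'):=\sum_{\substack{\gamma\cdot Q=Q\\ \emptyset\neq Q\subseteq Q'}}(-1)^{\dim F_Q}\det\rho_{F_Q}(\gamma).
\]

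It then remains to show that $W(Q')=-1$ if $Q'$ is bounded and $W(Q')=0$ if $Q'$ is unbounded; granting this, all unbounded $Q'$ drop out and one recovers exactly $-\sum_{F_{Q'}\neq\{0\},\ Q'\text{ bounded}}\det(tI-\rho_{F_{Q'}}(\gamma))$, the right-hand side. To compute $W(Q')$ I would use the Euler relation $\sum_{H}(-1)^{\dim H}\det\rho_H(\gamma)=0$, valid over the $\gamma$-invariant faces $H$ of any nonzero pointed cone (this follows from Lemma~\ref{l:mobius}, since $\sum_{H}\mu_\gamma(H,D)=0$ for $D\neq\{0\}$, combined with the formula for $\mu_\gamma$). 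Applied to the cone $F_{Q'}$, whose faces are the cones $F_Q$ over the faces $Q\subseteq Q'$ together with the cones over the faces of the recession cone $\operatorname{rec}(Q')$, this reads $0=(1+W(Q'))+\sum_{\{0\}\neq\tau}(-1)^{\dim\tau}\det\rho_\tau(\gamma)$, where $\tau$ runs over the nonzero $\gamma$-invariant faces of $\operatorname{rec}(Q')$. If $Q'$ is bounded the last sum is empty, so $W(Q')=-1$ (this case also follows at once from Lemma~\ref{l:mobius2}); if $Q'$ is unbounded then $\operatorname{rec}(Q')\neq\{0\}$ and a second application of the Euler relation shows that last sum equals $-1$, so $W(Q')=0$.

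The step I expect to be the main obstacle is the careful bookkeeping of the faces of the cone over a possibly unbounded polyhedron: verifying that the faces of $F_{Q'}$ are precisely the cones over the faces of $Q'$ together with the cones over the faces of $\operatorname{rec}(Q')$, and that no such "face at infinity" can contain a nonzero $F_Q$ (this last point is what identifies the faces of $C/F_Q$ with the faces $F_{Q'}\supseteq F_Q$). These are exactly the places where the pointedness of $R$ and the simplicity of $R$ are used; once they are established, the remainder is a double summation and two invocations of the Euler relation.
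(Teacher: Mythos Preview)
Your proof is correct and uses the same three ingredients as the paper --- Lemma~\ref{l:once}, the simplicity of $R$, and the M\"obius/Euler identity for the cone over a polyhedron (Lemma~\ref{l:mobius2}) --- but assembles them in the reverse order. The paper applies M\"obius inversion first (using Lemma~\ref{l:mobius2}) to write $0=\sum_{F_Q\neq\{0\}}\det(t^{-1}I-\rho_{F_Q}(\gamma))+\sum_{Q\text{ bounded}}(-1)^{\dim F_Q}\det\rho_{F_Q}(\gamma)\,g(Q)$, then computes $g(Q)=\sum_{Q\subseteq Q'}\det(t^{-1}I-\rho_{F_{Q'}}(\gamma))=t^{-\codim Q}\det(t^{-1}I-\rho_{F_Q}(\gamma))$ from simplicity, and finally invokes Lemma~\ref{l:once}. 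You instead apply Lemma~\ref{l:once} up front, then use simplicity (for \emph{all} nonempty $Q$, not only bounded ones) to expand $t^{\codim Q}\det(tI-\rho_{F_Q}(\gamma))$ as a sum over $Q'\supseteq Q$, and finish by computing $W(Q')$ via two Euler relations --- which amounts to re-deriving the content of Lemma~\ref{l:mobius2}. Your identity $\sum_{Q'\supseteq Q}\det(tI-\rho_{F_{Q'}}(\gamma))=t^{\codim Q}\det(tI-\rho_{F_Q}(\gamma))$ and the paper's computation of $g(Q)$ are literally the same equation with $t\leftrightarrow t^{-1}$.

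The one place your argument needs a bit more than the paper's is that you invoke simplicity for \emph{unbounded} faces $Q$ as well; this is fine (since $R$ has a vertex, every nonempty face of $R$ has a vertex, and the local simplex-cone structure there shows $Q$ lies in exactly $d-\dim Q$ facets), but the paper avoids this by only ever needing the simplicity identity for bounded $Q$, where Lemma~\ref{l:mobius2} has already done the filtering.
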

\begin{proof}
Let $\mathcal{P}$ denote the poset of  cones over $\gamma$-invariant faces of $R$, and consider the function $h: \mathcal{P} \rightarrow \Z[t]$ defined by $h( \{ 0 \}) = 0$ and $h(F_Q) =   \det( t^{-1}I - \rho_{F_Q}(\gamma)) $ if $F_Q \ne \{ 0 \}$. By Lemma~\ref{l:mobius2}, M\"obius inversion implies that 
\[
h( \{ 0 \}) = 0 = \sum_{F_Q \ne \{ 0 \} } \det( t^{-1}I - \rho_{F_Q}(\gamma)) + 
 \sum_{  \substack{ F_Q \ne \{ 0 \} \\   Q \textrm{ bounded }  } } (-1)^{\dim F_{Q}} \det \rho_{F_{Q}} (\gamma)
 g(Q),
\]
where $g(Q) = \sum_{Q \subseteq Q'} \det( t^{-1}I - \rho_{F_{Q'}}(\gamma)) $. In order to compute $g(Q)$,  observe that since $R$ is simple, $Q$ is contained in precisely $\codim Q$ facets of $R$. 
Moreover, $\rho_{C/F_Q}(\gamma)$ is conjugate to the permutation matrix associated to the action of $\gamma$ on these facets.  Let $\{ V_1, \ldots, V_s \}$ denote the $\gamma$-orbits of facets containing $Q$. 
For any (possibly empty) subset $I \subseteq \{ 1, \ldots , s \}$, let $Q_I$  be the
intersection of the facets $\{ F_j \in V_i \mid i \in I \}$. 
Then the faces $\{ Q_I \mid I \subseteq \{ 1, \ldots , s \} \}$ are precisely the faces of $R$ which contain $Q$ and are fixed by $\gamma$.
We compute
\begin{align*}
g(Q) &=   \det( t^{-1}I - \rho_{F_Q}(\gamma)) \sum_{I \subseteq \{ 1, \ldots , s \}}  \det( t^{-1}I - 
\rho_{F_{Q_I}/F_Q}(\gamma))  \\
&=  \det( t^{-1}I - \rho_{F_Q}(\gamma)) \sum_{I \subseteq \{ 1, \ldots , s \}} \prod_{i \in I} (t^{-|V_i|} - 1) \\
&=  \det( t^{-1}I - \rho_{F_Q}(\gamma)) \prod_{i  = 1}^s (  (t^{-|V_i|} - 1) + 1) \\
&= t^{-\codim Q} \det( t^{-1}I - \rho_{F_Q}(\gamma)). 
\end{align*}
By Lemma~\ref{l:once},  $t^{d + 1}g(Q) = (-1)^{\dim F_{Q}} \det \rho_{F_{Q}} (\gamma) \det( tI - \rho_{F_Q}(\gamma))$, and the result follows.
\end{proof}

We are now ready to prove that $E(\bar{C'};u,v)(\gamma) = (uv)^{d - 1}E(\bar{C'};u^{-1},v^{-1})(\gamma)$. 
We compute 
\[
(uv)^{d - 1}E(\bar{C'};u^{-1},v^{-1})(\gamma) =   (uv)^{d - 1} \sum_{ \substack{ \gamma \cdot F' = F' \\ F' \ne \{ 0 \} }} E(f(F');u^{-1},v^{-1})(\gamma) 
\frac{ \det((uv)^{-1}I - \rho_{F'}(\gamma) ) }{ \det((uv)^{-1}I - \rho_{f(F')}(\gamma) ) }.
\]
By Lemma~\ref{l:inverse}, the latter sum is equal to 
\[
 \sum_{ \substack{ \gamma \cdot F' = F' \\ F' \ne \{ 0 \} }} (uv)^{d + 1 - \dim f(F')}  \frac{ \det((uv)^{-1}I - \rho_{F'}(\gamma) ) }{ \det((uv)^{-1}I - \rho_{f(F')}(\gamma) ) } \sum_{ \substack{ \gamma \cdot F = F \\ \{ 0 \} \ne F \subseteq f(F') }}  \frac{ \det(I - \rho_{f(F')}(\gamma)uv ) }{ \det(I - \rho_{F}(\gamma)uv ) } E(F)(\gamma).
\]
\[
= \sum_{ \substack{ \gamma \cdot F = F \\ F \ne \{ 0 \} }}  \frac{ E(F)(\gamma) }{ \det(I - \rho_{F}(\gamma)uv ) }
 \sum_{ \substack{ \gamma \cdot F' = F' \\ F \subseteq f(F') }} (uv)^{d + 1} \det((uv)^{-1}I - \rho_{F'}(\gamma) ).
\]

Note that every maximal face  $F'$ in $C'$ corresponds to a half plane. If we fix a non-empty $\gamma$-invariant face $F$ of $C$ and let $S'$ be the intersection of the half planes corresponding to maximal faces $F'$ of $C'$ satisfying $F \subseteq f(F')$, then $S' = S \times \R^{\dim F - 1}$, where $S$ is the cone over a $\gamma$-invariant polyhedron $R$ with a non-empty bounded face, and 
  the representation of $\Gamma$ on  $\R^{\dim F - 1}$ is identified with $\rho_F'$, the representation associated with the action of $\Gamma$ on the affine span of the face of $P$   corresponding to $F$. 
 The non-empty faces $R_{F'}$ 
of $R$ are in bijective correspondence with the non-empty faces $F'$ of $C'$ such that $F \subseteq f(F')$,  and satisfy $\dim R_{F'} = \dim F' - \dim F$. Moreover, the non-empty bounded faces of $R$ are in bijective correspondence with the non-zero faces $F'$ of $C'$ such that $f(F') = F$. Hence Lemma~\ref{l:polyhedron} implies that
\[
 \sum_{ \substack{ \gamma \cdot F' = F' \\ F \subseteq f(F') }}
 \frac{ (uv)^{d + 2 - \dim F} \det((uv)^{-1}I - \rho_{F'}(\gamma) )( (uv)^{-1} - 1) }{ \det((uv)^{-1}I - \rho_{F}(\gamma) ) } \]
 \[ = -  \sum_{ \substack{ \gamma \cdot F' = F' \\ F = f(F') }}
  \frac{ \det(uvI - \rho_{F'}(\gamma) )( uv - 1) }{ \det(uvI - \rho_{F}(\gamma) ) }.
 \]
 By Lemma~\ref{l:once}, this simplifies to 
\[
 \sum_{ \substack{ \gamma \cdot F' = F' \\ F \subseteq f(F') }}
  (uv)^{d + 1} \det((uv)^{-1}I - \rho_{F'}(\gamma) )
 =  (-1)^{\dim F} \det \rho_F(\gamma) \sum_{ \substack{ \gamma \cdot F' = F' \\ F = f(F') }}
  \det(uvI - \rho_{F'}(\gamma) ).
 \]
 Combining this with our previous expression yields
 \begin{align*}
 (uv)^{d - 1}E(\bar{C'};u^{-1},v^{-1})(\gamma) &=   \sum_{ \substack{ \gamma \cdot F = F \\ F \ne \{ 0 \} }}  \frac{ (-1)^{\dim F} \det \rho_F(\gamma) E(F)(\gamma) }{ \det(I - \rho_{F}(\gamma)uv ) }
 \sum_{ \substack{ \gamma \cdot F' = F' \\ F = f(F') }} \det(uvI - \rho_{F'}(\gamma) ) \\
 &=  \sum_{ \substack{ \gamma \cdot F' = F' \\ F' \ne \{ 0 \} }} E(f(F'))(\gamma) 
\frac{ \det(uvI - \rho_{F'}(\gamma) ) }{ \det(uvI - \rho_{f(F')}(\gamma) ) } \\
&= E(\bar{C'};u,v)(\gamma).
 \end{align*}

In conclusion, we have proven the following formula for the equivariant Hodge-Deligne polynomial of a $\Gamma$-invariant, non-degenerate hypersurface in a torus.

\begin{theorem}\label{t:formula}
Let $\Gamma$ be a finite group, and let 
$\rho: \Gamma \rightarrow \GL_{d + 1}(\Z)$ be an integer-valued representation. Let $C \subseteq \R^{d + 1}$ be a $(d + 1)$-dimensional, $\Gamma$-invariant cone over a lattice polytope $P$. 
If $X^\circ$ is a $\Gamma$-invariant hypersurface of the corresponding torus $T$ which is non-degenerate with respect to $P$, then $E_\Gamma(X^\circ;u,v)$ equals
\[
(1/uv)[E_\Gamma(T)  + (-1)^{d + 1} \det(\rho) \sum_{[F] \in C/\Gamma}  u^{\dim F}
\Ind^{\Gamma}_{\Gamma_F} \det(\rho_F)
 \tilde{S}_{\Gamma_F}(F,u^{-1}v)G(C/F,uv)],
\]
where $C/\Gamma$ denotes the set of $\Gamma$-orbits of faces of $C$, $C/F$ denotes the image of $C$ in the quotient of $\R^{d + 1}$ by the linear span of $F$, and
$E_\Gamma(T;u,v) = \frac{\det[uvI - \rho ]}{uv - 1}$.
\end{theorem}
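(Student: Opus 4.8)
The plan is to verify that the explicit expression $E(C;u,v)$ appearing in the statement satisfies, term by term, the algorithm of \cite{YoRepresentations} — the equivariant refinement of the Danilov--Khovanski{\u\i} algorithm — that computes $E_\Gamma(X^\circ;u,v) = \sum_{0 \le p,q \le d-1} e^{p,q}_\Gamma u^p v^q$. That algorithm proceeds in three parts: it first determines $e^{p,q}_\Gamma$ for $p+q > d-1$, then the column sums $\sum_q e^{p,q}_\Gamma$ (equivalently, the specialization $v = 1$), and finally $e^{p,q}_\Gamma$ for $p+q < d-1$ by exploiting Poincar\'e duality on a toric partial resolution together with induction on the dimension of $C$. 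Since each of these constraints is linear in representations, it is enough to check the corresponding identities after evaluating virtual characters at an arbitrary fixed $\gamma \in \Gamma$; this replaces every $\Ind_{\Gamma_F}^\Gamma$ by a sum over the $\gamma$-invariant faces of $C$, and brings Lemmas \ref{l:mobius}, \ref{l:once}, \ref{l:ratio}, \ref{l:convolution} and Proposition~\ref{p:symmetry} into play.

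The first two parts are comparatively quick. For $p+q>d-1$: since $\deg_t G(C/F,t) \le \tfrac{d+1-\dim F}{2}$ by Definition~\ref{d:hg} and $\deg_t \tilde{S}_{\Gamma_F}(F,t) \le \dim F$ by Lemma~\ref{l:symmetry}, the face sum in $E(C)$ has total $(u,v)$-degree at most $d-1$, so the coefficient of $u^pv^q$ in $E(C)$ equals that of $E_\Gamma(T)/uv$, as required. For the $v=1$ specialization: apply Lemma~\ref{l:symmetry} to turn $\tilde{S}_{\Gamma_F}(F,u^{-1})$ into $u^{-\dim F}\tilde{S}_{\Gamma_F}(F,u)$, unwind the definition of $\tilde{S}_{\Gamma_F}$, reorganize the resulting sum over nested $\gamma$-invariant pairs $F'\subseteq F$ so that $F'$ is the outer index, and apply Lemma~\ref{l:convolution} to the quotient cone $C/F'$; the convolution identity collapses the inner sum to leave exactly $(1/u)[E_\Gamma(T;u,1)(\gamma) + (-1)^{d+1}\det\rho(\gamma)\,\phi_C[u](\gamma)]$.

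The third part is the main obstacle. Fix a $\Gamma$-equivariant simplicial (``majorizing'') refinement $C'$ of $C$, corresponding to a projective $\Gamma$-equivariant partial resolution $Y' \to Y$ on whose hypersurface Poincar\'e duality holds; the remaining content of the algorithm is then the symmetry $E(\bar{C'};u,v)(\gamma) = (uv)^{d-1}E(\bar{C'};u^{-1},v^{-1})(\gamma)$, where $E(\bar{C'};u,v)(\gamma) = \sum_{\gamma\cdot F'=F',\ F'\ne\{0\}} E(f(F');u,v)(\gamma)\,\frac{\det(uvI-\rho_{F'}(\gamma))}{\det(uvI-\rho_{f(F')}(\gamma))}$. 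I would first establish the auxiliary identity of Lemma~\ref{l:inverse}, namely $(uv)^{d-1}E(C;u^{-1},v^{-1}) = \sum_{[F]\in C/\Gamma,\ F\ne\{0\}} \Ind^\Gamma_{\Gamma_F}\frac{\det[I-\rho_C uv]}{\det[I-\rho_F uv]}E(F;u,v)$, whose proof is a character computation using Proposition~\ref{p:symmetry}, Lemma~\ref{l:once}, and — crucially — Lemma~\ref{l:mobius} to kill the stray term $\frac{\det(I-\rho_C uv)}{uv(1-uv)}\sum_{\gamma\cdot F'=F'}(-1)^{\dim F'}\det\rho_{F'}(\gamma)$. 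Applying Lemma~\ref{l:inverse} to each $E(f(F');u^{-1},v^{-1})$ and swapping the order of summation (the outer index becoming a $\gamma$-invariant face $F\ne\{0\}$ of $C$, the inner sum running over $\gamma$-invariant faces $F'$ of $C'$ with $F\subseteq f(F')$) reduces everything to understanding that inner sum. Its key structural feature is that it is governed by the cone over a simple $\gamma$-invariant polyhedron $R$ — the ``link'' of $F$ in the refinement, times a factor $\R^{\dim F-1}$ carrying $\rho_F'$ — whose non-empty bounded faces correspond exactly to the faces $F'$ of $C'$ with $f(F')=F$. Feeding this into Lemma~\ref{l:polyhedron} (itself built on Lemma~\ref{l:mobius2} and M\"obius inversion) converts the $u^{-1},v^{-1}$ sum into the $u,v$ sum up to the factor $(-1)^{\dim F}\det\rho_F(\gamma)$, and a last application of Lemma~\ref{l:once} reassembles $E(\bar{C'};u,v)(\gamma)$, completing the argument. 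The genuinely hard, non-formal input is the geometry underlying Step~3 — the existence of the majorizing refinement, the product decomposition $S' = S\times\R^{\dim F-1}$ near a fixed face, and Poincar\'e duality on the resolved hypersurface — whereas everything else is careful M\"obius-function bookkeeping.
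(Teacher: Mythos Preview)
Your proposal is correct and follows essentially the same route as the paper: you verify that the candidate expression $E(C;u,v)$ satisfies the three constraints of the equivariant Danilov--Khovanski{\u\i} algorithm from \cite{YoRepresentations}, using exactly the same lemmas (Lemmas~\ref{l:mobius}, \ref{l:once}, \ref{l:convolution}, \ref{l:inverse}, \ref{l:mobius2}, \ref{l:polyhedron} and Proposition~\ref{p:symmetry}) at exactly the same places. The only cosmetic difference is that in Step~1 you make explicit the degree bound $\deg_t \tilde{S}_{\Gamma_F}(F,t)\le \dim F$, which the paper leaves implicit.
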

\begin{proof}
This follows from the main result in \cite{YoRepresentations}, which is an algorithm for computing 
$E_\Gamma(X^\circ;u,v)$, together with the above computations, which show that the conjectured formula for $E_\Gamma(X^\circ;u,v)$ satisfies all the properties which uniquely determine $E_\Gamma(X^\circ;u,v)$.
\end{proof}


\section{Equivariant stringy invariants of hypersurfaces}\label{s:stringy}

The goal of this section is to explicitly compute the equivariant stringy invariant of a non-degenerate, $\Gamma$-invariant hypersurface in a Gorenstein, projective toric variety. We refer the reader to \cite{FulIntroduction} and \cite{TevCompactifications} for details on toric varieties and non-degenerate hypersurfaces. 

We continue with the notations of the previous sections. That is,  $\Gamma$ is a finite group
with a representation
$\rho: \Gamma \rightarrow \GL_{d + 1}(\Z)$, and
 $C$ is the cone over a $d$-dimensional, $\Gamma$-invariant lattice polytope $P$. 
We may and will assume that   $\Z^{d + 1}$ is generated by lattice points in the affine span
$\aff(P)$ of $P$. If $M$ denotes a translate of $\aff(P) \cap \Z^{d + 1}$ to the origin, then we have an induced representation $\rho': \Gamma \rightarrow GL(M)$. We have an induced action of $\Gamma$ on 
the  projective toric variety  $Y = Y_P = \Proj \C[C \cap \Z^{d + 1}]$ with torus $T = \Spec \C[M]$ via toric morphisms.
Let $X$ be a $\Gamma$-invariant, non-degenerate hypersurface in $Y$ with respect to $P$ (see Remark~\ref{r:toric}). The toric variety $Y$ has a stratification $Y = \cup_F T_F$ into torus orbits $T_F$ indexed by the non-zero faces $F$ of $C$, which induces a stratification $X = \cup_F X_F^\circ$, where 
$X_F^\circ$ is a non-degenerate hypersurface in the $(\dim F - 1)$-dimensional torus $T_F$. We let $N = \Hom(M,\Z)$ with its induced $\Gamma$-action,  and let $\Sigma$ denote the normal fan to $P$. We assume throughout this section that $Y$ is \define{Gorenstein}. This assumption is equivalent to assuming that there exists a piecewise-linear function $\psi_{\Sigma}: N_\R \rightarrow \R$ with respect to $\Sigma$ such that $\psi_{\Sigma}(v) = 1$ for all primitive integer vectors $v$ on the rays of $\Sigma$. 

We first define the equivariant stringy invariant of a complex, Gorenstein variety $Z$ with an action of $\Gamma$ and at worst canonical singularities. Assume there exists a $\Gamma$-equivariant resolution of singularities $\pi: Z' \rightarrow Z$ such that the relative canonical divisor $K_{Z'/Z} = \sum_{i = 1}^r a_i D_i$ is a simple normal crossings divisor. 
The assumption that $Z$ is Gorenstein means that $Z$ admits a canonical divisor, and the assumption that $Z$ has at worst canonical singularities means that the coefficients $a_i$ are non-negative integers. 
The divisor $K_{Z'/Z}$ is supported on the exceptional locus of  $\pi$ and is $\Gamma$-invariant. Note that $\Gamma$ permutes the prime divisors $\{ D_i \mid 1 \le i \le r \}$, and hence acts on the set 
$\{ 1, \ldots, r \}$. 
For each (possibly empty) subset $J$ of $\{ 1, \ldots, r \}$, let $D_J^\circ = \cap_{j \in J} D_j \setminus 
\cup_{i \notin J} D_i$. For example, $D_J^\circ = Z' \setminus \cup_{i = 1}^r D_i$ when $J = \emptyset$. 
Then $Z'$ admits a stratification as a disjoint union of the locally closed subvarieties $D_J^\circ$. 

\begin{definition}\label{d:esi}
With the notation above, the \define{equivariant stringy invariant} $E_{\st, \Gamma}(Z;u,v) \in R(\Gamma)[[u,v]]$ of $Z$ is the power series of virtual representations defined as follows:
for each $\gamma$ in $\Gamma$, 
\[
E_{\st, \Gamma}(Z;u,v)(\gamma) = \sum_{  \substack{ J \subseteq \{ 1, \ldots , r \} \\ \gamma \cdot J = J } }
E_{\Gamma_J} (D_J^\circ;u,v)(\gamma) \prod_{j = 1}^l \frac{(uv)^{|J_j|}  - 1 }{  (uv)^{|J_j|(a_j + 1)}  - 1 },
\]
where $J_1, \ldots, J_l$ denote the $\gamma$-orbits of $J$, $a_j$ denotes the coefficient in $K_{Z'/Z}$ of a prime divisor in $J_j$, and $\Gamma_J$ denotes the stabilizer of $J$. 
\end{definition}

\begin{remark}
When $\gamma = 1$, $E_{\st, \Gamma}(Z)(\gamma)$ equals the 
stringy invariant of $Z$
introduced by Batyrev in 
\cite{BatStringy}. 
\end{remark}

\begin{remark}\label{r:crepant}
Observe that if $\pi: Z' \rightarrow Z$ is a crepant resolution, i.e. $K_{Z'/Z} = 0$, then  $E_{\st, \Gamma}(Z) = E_{\Gamma}(Z')$ by Proposition~\ref{p:induced}. 
\end{remark}

\begin{remark}\label{r:motivic}
Using Example~\ref{e:affine}, one can generalize motivic integration to complex varieties with a $\Gamma$-action, and define the equivariant stringy invariant as a motivic integral (cf. Section~3.6 and Definition~7.7 in \cite{VeyArc} when $\Gamma$ is trivial). In fact, this was the motivation for the form of Definition~\ref{d:esi}. Using this approach, one should be able to show that $E_{\st, \Gamma}(Z)$ is independent of the choice of equivariant resolution. In our case of interest, we will see this independence later by other means. Since we will not use this remark, we leave the details as an open problem. 
\end{remark}

Let us now return to our situation. A result of Abramovich and Wang \cite{AWEquivariant} implies that there exists a  $\Gamma$-invariant, smooth fan $\Sigma'$   refining
the normal fan $\Sigma$ to $P$. Since $Y$ is Gorenstein by assumption,  the associated $\Gamma$-equivariant toric resolution of singularities $Y' = Y'(\Sigma') \rightarrow Y = Y(\Sigma)$ admits a torus-invariant relative canonical divisor 
\[
K_{Y'/Y} = \sum_{v_i'} (\psi_\Sigma(v_i') - 1)E_i,
\] 
where $v_i'$ varies over the primitive integer vectors of the rays of $\Sigma'$, and $E_i$ denotes the torus-invariant prime divisor in $Y'$ corresponding to $v_i'$. 
If $X'$ denotes the closure of $X^\circ = X \cap T$ in $Y'$, then we have an induced $\Gamma$-equivariant resolution of singularities $\pi: X' \rightarrow X$ with $\Gamma$-invariant, simple normal crossings,  relative canonical divisor $K_{X'/X} = K_{Y'/Y}|_{X'}$ (see, for example, \cite[Theorem~1.4]{TevCompactifications}). We will call such a resolution a $\Gamma$-equivariant \define{toric resolution of singularities}. If $T_{\tau'} \subseteq Y'$ denotes the torus orbit corresponding to a cone $\tau'$ in $\Sigma'$, then the value of the  equivariant stringy invariant of $X$ at $\gamma$ in $\Gamma$ equals
\[
E_{\st, \Gamma}(X)(\gamma) = \sum_{  \substack{ \tau' \in \Sigma' \\ \gamma \cdot \tau' = \tau' } }
E_{\Gamma_{\tau'}} (X' \cap T_{\tau'})(\gamma) \prod_{j = 1}^l \frac{(uv)^{|J_j|}  - 1 }{  (uv)^{|J_j|\psi_\Sigma(v_j')}  - 1 },
\]
where $J_1, \ldots, J_l$ denote the $\gamma$-orbits of rays of $\tau'$, $v_j'$ denotes a primitive integer vector of a ray in  $J_j$, and $\Gamma_{\tau'}$ denotes the stabilizer of $\tau'$. For a fixed $\tau' \in \Sigma'$, let $\tau$ denote the smallest cone in   $\Sigma$ containing $\tau'$, with corresponding torus orbit $T_{\tau} \subseteq Y$. 
As in Section~4 and Section~6.2 in \cite{YoRepresentations}, $\pi: X' \rightarrow X$ induces a $\Gamma$-equivariant 
projection 
\[
X' \cap T_{\tau'} \cong (X \cap T_{\tau}) \times T_{\tau',\pi} \rightarrow X \cap T_{\tau},
\]
for a torus $T_{\tau, \pi}$, inducing an equality
\[
E_{\Gamma_{\tau'} }(X' \cap T_{\tau'})(\gamma) = E_{\Gamma_\tau} (X \cap T_{\tau})(\gamma) \frac{\det( uvI - \rho_{\tau}(\gamma)  )}{ \det( uvI - \rho_{\tau'}(\gamma)  )  },
\]
where  $\rho_{\tau}$ (respectively $\rho_{\tau'}$) denotes the representation of $\Gamma$ on the linear span of $\tau$ (respectively $\tau'$). Since the restriction of $\rho_{\tau'}$ to the cyclic group $\langle \gamma\rangle$ generated by $\gamma$ is isomorphic to the permutation representation of $\langle \gamma\rangle$ acting on the rays of $\tau'$, it follows that
\begin{align*}
E_{\st, \Gamma}(X)(\gamma) &=  \sum_{  \substack{ \tau' \in \Sigma' \\ \gamma \cdot \tau' = \tau' } }
E_{\Gamma_\tau}(X \cap T_{\tau})(\gamma) \frac{\det( uvI - \rho_{\tau}(\gamma)  )}{ \det( uvI - \rho_{\tau'}(\gamma)  )  } \prod_{j = 1}^l \frac{(uv)^{|J_j|}  - 1 }{  (uv)^{|J_j|\psi_\Sigma(v_j')}  - 1 } \\
&=  \sum_{  \substack{ \tau' \in \Sigma' \\ \gamma \cdot \tau' = \tau' } }
E_{\Gamma_\tau} (X \cap T_{\tau})(\gamma) \det( uvI - \rho_{\tau}(\gamma)  ) \prod_{j = 1}^l \frac{1 }{  (uv)^{|J_j|\psi_\Sigma(v_j')}  - 1 }.
\end{align*}
where $J_1, \ldots, J_l$ denote the $\gamma$-orbits of rays of $\tau'$, and $v_j'$ denotes a primitive integer vector of a ray in  $J_j$. On the other hand, since $\tau'$ is a unimodular cone, 
\[
\sum_{ \substack{ v \in \Int(\tau') \cap N \\ \gamma \cdot v = v  }} (uv)^{- \psi_{\Sigma}(v)} = \prod_{j = 1}^l \frac{  (uv)^{- |J_j|\psi_\Sigma(v_j')} }{   1 - (uv)^{-|J_j|\psi_\Sigma(v_j')}  } = \prod_{j = 1}^l \frac{1 }{  (uv)^{|J_j|\psi_\Sigma(v_j')}  - 1 }. 
\]
Putting these two expressions together yields the equality,
\begin{equation}\label{e:temp}
E_{\st, \Gamma}(X)(\gamma) =  \sum_{ \substack{ \tau \in \Sigma \\ \gamma \cdot \tau = \tau } } E_{\Gamma_\tau} (X \cap T_{\tau})(\gamma) \det( uvI - \rho_{\tau}(\gamma)  ) \sum_{ \substack{ v \in \Int(\tau) \cap N \\ \gamma \cdot v = v  }} (uv)^{- \psi_{\Sigma}(v)},
\end{equation}
which shows that $E_{\st, \Gamma}(X)(\gamma)$ is independent of the choice of $\Sigma'$. 
If $\tau$ is non-zero, then the assumption that $Y$ is Gorenstein implies that $\tau$ is the cone over the lattice polytope $Q_\tau = \tau \cap \psi_{\Sigma}^{-1}(1)$, and $mQ_\tau = \tau \cap \psi_{\Sigma}^{-1}(m)$ for every non-negative integer $m$. Following \eqref{e:Ehrhart}, we may consider the expression
\[
\sum_{m \ge 0} \chi_{\tau,m} t^m = \frac{\phi_{\tau}[t]}{\det[I - \rho_\tau t]} \in R(\Gamma_\tau)[[t]],
\]
where $\chi_{\tau,m} $ denotes the permutation representation of $\Gamma_\tau$ on the lattice points in $\tau \cap \psi_{\Sigma}^{-1}(m)$, and
$\phi_{\tau}[t] \in R(\Gamma_\tau)[[t]]$. Observe that when $\tau = \{ 0 \}$, $\phi_{\tau}[t] = \det[I - \rho_\tau t] = 1 \in R(\Gamma)$, and hence the above equation holds in this case. Evaluating \eqref{e:reciprocity} at $\gamma \in \Gamma$ yields
\[
 \sum_{ \substack{ v \in \Int(\tau) \cap N \\ \gamma \cdot v = v  }} t^{\psi_{\Sigma}(v)} = 
 \frac{ t^{\dim \tau} \phi_{\tau}[t^{-1}](\gamma) }{\det(I - \rho_\tau(\gamma) t)} 
\]
Setting $t = (uv)^{-1}$ and substituting into \eqref{e:temp} yields
\[
E_{\st, \Gamma}(X)(\gamma) = \sum_{ \substack{ \tau \in \Sigma \\ \gamma \cdot \tau = \tau } } E_{\Gamma_\tau} (X \cap T_{\tau})(\gamma) \phi_{\tau}[uv](\gamma).
\]
Finally, recall that there is a bijection between the cones $\tau$ in $\Sigma$ and the non-zero cones of $C$, such that, with our previous notations, $T_F = T_\tau$ and $\Gamma_F = \Gamma_\tau$. 
We summarize the results of our discussion in the following proposition. 

\begin{proposition}\label{p:stringy}
Let $\Gamma$ be a finite group, and let 
$\rho: \Gamma \rightarrow \GL_{d + 1}(\Z)$ be an integer-valued representation. Let $C \subseteq \R^{d + 1}$ be a $(d + 1)$-dimensional, $\Gamma$-invariant cone over a lattice polytope $P$. Let $\Sigma$ denote the normal fan of $P$ with corresponding toric variety $Y = Y(\Sigma)$. If $X$ is a $\Gamma$-invariant hypersurface of $Y$ which is non-degenerate with respect to $P$, then the equivariant stringy invariant of $X$ is given by  
\[
E_{\st, \Gamma}(X; u,v) = \sum_{ \substack{ [F] \in C/\Gamma \\ F \ne \{ 0 \} } } 
 \Ind^{\Gamma}_{\Gamma_F}  E_{\Gamma_F} (X \cap T_{F} ; u,v) \phi_{\tau}[uv],
\]
where $C/\Gamma$ denotes the set of $\Gamma$-orbits of faces of $C$, and $\tau$ is the cone in $\Sigma$ corresponding to the face $F$ of $C$. Moreover, an explicit formula for  
$E_{\Gamma_F} (X \cap T_{F} ; u,v)$ is given by Theorem~\ref{t:formula}. 
If $X$ admits a crepant, $\Gamma$-equivariant, toric resolution $X' \rightarrow X$, then 
$E_{\st, \Gamma}(X) = E_\Gamma (X')$. 
\end{proposition}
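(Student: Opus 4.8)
The plan is to compute $E_{\st,\Gamma}(X;u,v)$ directly from Definition~\ref{d:esi} applied to an explicit toric resolution, and then repackage the answer using equivariant Ehrhart reciprocity. First I would invoke the theorem of Abramovich and Wang \cite{AWEquivariant} to obtain a $\Gamma$-invariant smooth fan $\Sigma'$ refining the normal fan $\Sigma$ of $P$. Since $Y$ is Gorenstein, the associated $\Gamma$-equivariant toric morphism $Y'\to Y$ carries the torus-invariant relative canonical divisor $K_{Y'/Y}=\sum_{v_i'}(\psi_\Sigma(v_i')-1)E_i$, and the closure $X'$ of $X^\circ$ in $Y'$ is a $\Gamma$-equivariant resolution $\pi\colon X'\to X$ with $\Gamma$-invariant simple normal crossings relative canonical divisor $K_{X'/X}=K_{Y'/Y}|_{X'}$ \cite{TevCompactifications}. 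Feeding this resolution into Definition~\ref{d:esi} and stratifying $X'$ by its torus orbits $X'\cap T_{\tau'}$ (so that the stratum $D_J^\circ$ with $J$ the set of rays of $\tau'$ is exactly $X'\cap T_{\tau'}$, and the coefficient attached to an orbit $J_j$ of rays of $\tau'$ is $a_j=\psi_\Sigma(v_j')-1$), one obtains, for each $\gamma\in\Gamma$, an expression for $E_{\st,\Gamma}(X)(\gamma)$ as a sum over $\gamma$-fixed cones $\tau'\in\Sigma'$ of $E_{\Gamma_{\tau'}}(X'\cap T_{\tau'})(\gamma)$ weighted by $\prod_j\frac{(uv)^{|J_j|}-1}{(uv)^{|J_j|\psi_\Sigma(v_j')}-1}$.

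Next I would exploit the local structure of the toric resolution: if $\tau\in\Sigma$ is the smallest cone containing $\tau'$, then $\pi$ induces a $\Gamma_{\tau'}$-equivariant product decomposition $X'\cap T_{\tau'}\cong(X\cap T_\tau)\times T_{\tau',\pi}$ for a torus $T_{\tau',\pi}$, so the multiplicativity of $E_\Gamma$ together with Example~\ref{e:tori} replaces $E_{\Gamma_{\tau'}}(X'\cap T_{\tau'})(\gamma)$ by $E_{\Gamma_\tau}(X\cap T_\tau)(\gamma)\cdot\det(uvI-\rho_\tau(\gamma))/\det(uvI-\rho_{\tau'}(\gamma))$. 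Collecting, for each $\gamma$-fixed $\tau\in\Sigma$, the contributions of all smooth cones $\tau'$ refining it, and using that a unimodular cone satisfies the geometric-series identity $\sum_{v\in\Int(\tau')\cap N,\ \gamma\cdot v=v}(uv)^{-\psi_\Sigma(v)}=\prod_j((uv)^{|J_j|\psi_\Sigma(v_j')}-1)^{-1}$, the sum collapses to the intrinsic expression
\[
E_{\st,\Gamma}(X)(\gamma)=\sum_{\substack{\tau\in\Sigma\\\gamma\cdot\tau=\tau}}E_{\Gamma_\tau}(X\cap T_\tau)(\gamma)\,\det(uvI-\rho_\tau(\gamma))\sum_{\substack{v\in\Int(\tau)\cap N\\\gamma\cdot v=v}}(uv)^{-\psi_\Sigma(v)},
\]
which no longer refers to $\Sigma'$; this step also proves independence of the chosen smooth refinement.

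Then I would apply equivariant Ehrhart reciprocity~\eqref{e:reciprocity}. Since $Y$ is Gorenstein, each non-zero $\tau\in\Sigma$ is the cone over the lattice polytope $Q_\tau=\tau\cap\psi_\Sigma^{-1}(1)$ with $mQ_\tau=\tau\cap\psi_\Sigma^{-1}(m)$, so evaluating \eqref{e:reciprocity} gives $\sum_{v\in\Int(\tau)\cap N,\ \gamma\cdot v=v}t^{\psi_\Sigma(v)}=t^{\dim\tau}\phi_\tau[t^{-1}](\gamma)/\det(I-\rho_\tau(\gamma)t)$; setting $t=(uv)^{-1}$ cancels the determinant factor and leaves $E_{\st,\Gamma}(X)(\gamma)=\sum_{\tau\in\Sigma,\ \gamma\cdot\tau=\tau}E_{\Gamma_\tau}(X\cap T_\tau)(\gamma)\,\phi_\tau[uv](\gamma)$. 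Rewriting this sum over $\gamma$-fixed cones as a sum over $\Gamma$-orbits with induction via Proposition~\ref{p:induced}, and transporting everything along the bijection between the cones $\tau$ of $\Sigma$ and the non-zero faces $F$ of $C$ (so that $T_\tau=T_F$ and $\Gamma_\tau=\Gamma_F$), yields the stated formula, the explicit value of $E_{\Gamma_F}(X\cap T_F)$ being supplied by Theorem~\ref{t:formula} applied to the non-degenerate hypersurface $X\cap T_F$ in the torus $T_F$. Finally, if $X'\to X$ is a crepant $\Gamma$-equivariant toric resolution then $K_{X'/X}=0$, hence every $a_j=0$ and each correction product in Definition~\ref{d:esi} equals $1$; therefore $E_{\st,\Gamma}(X)(\gamma)=\sum_{J,\ \gamma\cdot J=J}E_{\Gamma_J}(D_J^\circ)(\gamma)=E_\Gamma(X')(\gamma)$ by Proposition~\ref{p:induced} (cf.\ Remark~\ref{r:crepant}), the independence of this choice of resolution being guaranteed by the intrinsic formula \eqref{e:temp}.

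I expect the second step to be the main obstacle: establishing the $\Gamma_{\tau'}$-equivariant product decomposition $X'\cap T_{\tau'}\cong(X\cap T_\tau)\times T_{\tau',\pi}$ for a non-degenerate hypersurface — which rests on the explicit toric description of the refinement together with the non-degeneracy hypothesis on $X$ — and then verifying that the characteristic-polynomial ratios $\det(uvI-\rho_\tau)/\det(uvI-\rho_{\tau'})$ and the coefficient products $\prod_j\frac{(uv)^{|J_j|}-1}{(uv)^{|J_j|\psi_\Sigma(v_j')}-1}$ recombine, via the geometric series over $\Int(\tau')\cap N$, into precisely the intrinsic weight $\det(uvI-\rho_\tau(\gamma))\sum_v(uv)^{-\psi_\Sigma(v)}$. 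The remaining steps are formal manipulations with the additive and multiplicative properties of $E_\Gamma$, the orbit-versus-fixed-point regrouping of Proposition~\ref{p:induced}, and the two Ehrhart identities of Section~\ref{s:tilde}.
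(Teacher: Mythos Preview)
Your proposal is correct and follows essentially the same route as the paper: the argument there is exactly the discussion preceding the proposition, proceeding via the Abramovich--Wang equivariant smooth refinement, the toric product decomposition $X'\cap T_{\tau'}\cong(X\cap T_\tau)\times T_{\tau',\pi}$ (referenced to \cite{YoRepresentations}), the unimodular geometric-series identity to obtain the intrinsic expression \eqref{e:temp}, and then equivariant Ehrhart reciprocity \eqref{e:reciprocity} to collapse the weight to $\phi_\tau[uv]$. Your identification of the product decomposition as the step requiring the most care matches the paper's treatment, which simply cites the relevant sections of \cite{YoRepresentations} for it.
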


\begin{remark}
Using Lemma~\ref{l:inverse}, together with \eqref{e:temp} and the above proposition, one verifies that 
$E_{\st, \Gamma}(X; u,v) = (uv)^{d - 1}E_{\st, \Gamma}(X; u^{-1},v^{-1})$. 
\end{remark}

The polytope $P$ is \define{reflexive} if it contains a unique interior lattice point $v$, and, after setting 
$M = (\aff(P) \cap \Z^{d + 1}) - v$, where $\aff(P)$ is the affine span of $P$, and regarding $P$ as a polytope in $M$ after translation, 
 every non-zero lattice point in $M$ lies on the boundary of $mP$ for some positive integer $m$.  
 We refer the reader to Section~1 in \cite{BNCombinatorial} for a thorough discussion of reflexive polytopes. If $P$ is reflexive, then the dual cone $\hat{C}$ of $C$ is the cone over a lattice polytope $P^*$, which itself is reflexive. With the notation of \eqref{e:Ehrhart}, Corollary~6.9 in \cite{YoEquivariant} states that $P$ is reflexive if and only if 
\begin{equation}\label{e:reflex}
\phi_{C}[t] = t^d\phi_C[t^{-1}]. 
\end{equation}


If $Y^*$ denotes the projective toric variety corresponding to the normal fan of $P^*$, then the toric varieties $Y$ and $Y^*$ are Fano i.e. their anti-canonical divisors are ample, and, in particular, are Gorenstein. 
When $\Gamma$ is trivial, the formula below is a reformulation due to Borisov and Maylutov \cite[Theorem~7.2]{BMString} of a formula of Batyrev and Borisov \cite[Theorem~4.14]{BBMirror}. 

\begin{corollary}\label{c:reflexive}
Let $\Gamma$ be a finite group, and let 
$\rho: \Gamma \rightarrow \GL_{d + 1}(\Z)$ be an integer-valued representation. Let $C \subseteq \R^{d + 1}$ be a $(d + 1)$-dimensional, $\Gamma$-invariant cone over a reflexive polytope $P$. If $X$ is a $\Gamma$-invariant hypersurface of $Y$ which is non-degenerate with respect to $P$, then the equivariant stringy invariant of $X$ is given by  
\begin{equation}\label{e:reflexive}
E_{\st, \Gamma}(X; u,v) = \frac{\det(\rho)}{uv} \sum_{ [F] \in C/\Gamma } (-u)^{\dim F}  \Ind^{\Gamma}_{\Gamma_F}  \det (\rho_F) \tilde{S}_{\Gamma_F}(F, u^{-1}v) \tilde{S}_{\Gamma_F}(F^*, uv), 
\end{equation}
where $C/\Gamma$ denotes the set of $\Gamma$-orbits of faces of $C$.
\end{corollary}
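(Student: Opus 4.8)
The plan is to expand the formula of Proposition~\ref{p:stringy} using Theorem~\ref{t:formula} and then to reindex the resulting sum into the right-hand side of \eqref{e:reflexive}. Reflexivity is used first to identify the combinatorial data. Since $P$ is reflexive, $\hat C$ is the cone over the reflexive polytope $P^*$, the normal fan $\Sigma$ of $P$ is the fan over the faces of $P^*$, and the cone $\tau$ of $\Sigma$ associated with a face $F$ of $C$ is precisely the dual face $F^*$ of $\hat C$, with its induced lattice polytope structure; hence $\phi_\tau[uv] = \phi_{F^*}[uv]$. Thus Proposition~\ref{p:stringy} becomes
\[
E_{\st,\Gamma}(X;u,v) = \sum_{\substack{[F]\in C/\Gamma \\ F\ne\{0\}}} \Ind^{\Gamma}_{\Gamma_F}\, E_{\Gamma_F}(X\cap T_F;u,v)\,\phi_{F^*}[uv],
\]
and by Theorem~\ref{t:formula} each $E_{\Gamma_F}(X\cap T_F;u,v)$ is the corresponding expression attached to the cone $F$ over the relevant face of $P$, involving $E_{\Gamma_F}(T_F)=\det[uvI-\rho_F]/(uv-1)$ and a sum over faces $F'\subseteq F$ of the terms $u^{\dim F'}\det(\rho_{F'})\tilde S_{\Gamma_{F'}}(F',u^{-1}v)G(F/F',uv)$.

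Next I would substitute this in, evaluate characters at a fixed $\gamma\in\Gamma$, and split the result into the contribution $\Sigma_1$ of the torus terms $\det[uvI-\rho_F]/(uv-1)$ and the contribution $\Sigma_2$ of the double sum over $\gamma$-invariant flags $F'\subseteq F$. In $\Sigma_2$ I would interchange the order of summation, summing over $F'$ first; for fixed $F'$, the inner sum over $F\supseteq F'$ is — up to explicit signs — exactly $\tilde S_{\Gamma_{F'}}((F')^*,uv)(\gamma)$. This is read off from Definition~\ref{d:stilde} applied to the cone $(F')^*$: its faces are the $F^*$ with $F'\subseteq F$, with $\Gamma_{F^*}=\Gamma_F$; the duality internal to $(F')^*$ carries $F^*$ to the projected cone $F/F'$, so the $G$-polynomials agree; and the determinant bookkeeping is governed by the factorization $\det(tI-\rho(\gamma)) = \det(tI-\rho_F(\gamma))\det(tI-\rho_{F^*}(\gamma))$ (Lemma~\ref{l:ratio} applied with the face $\{0\}$) together with its consequence at $t=0$,
\[
\det\rho_F(\gamma) = \det\rho(\gamma)\,\det\rho_{F^*}(\gamma)\qquad(\gamma\cdot F=F).
\]
Carrying this out identifies $\Sigma_2$ with the right-hand side of \eqref{e:reflexive} up to a single leftover term $-\tfrac{1}{uv}\phi_{\hat C}[uv](\gamma)$, which arises because the summand $F=\{0\}$ of $\tilde S_\Gamma(\hat C,uv)$ must be dropped in the inner sum when $F'=\{0\}$.

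It then remains to show that $\Sigma_1 = \tfrac{1}{uv}\phi_{\hat C}[uv](\gamma)$, i.e. the identity
\[
\sum_{\substack{\gamma\cdot F=F\\ F\ne\{0\}}}\frac{\det(uvI-\rho_F(\gamma))}{uv-1}\,\phi_{F^*}[uv](\gamma) = \phi_{\hat C}[uv](\gamma),
\]
and this is where reflexivity enters essentially. Applying Corollary~\ref{c:technical} to $\hat C$, substituting $t\mapsto t^{-1}$, invoking the reflexivity characterization \eqref{e:reflex} for $P^*$ (so that $\phi_{\hat C}[t]=t^{d}\phi_{\hat C}[t^{-1}]$), and using the factorization $\det(tI-\rho(\gamma))=\det(tI-\rho_F(\gamma))\det(tI-\rho_{F^*}(\gamma))$ again, one rewrites Corollary~\ref{c:technical} precisely as the displayed identity. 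I expect the main obstacle to be keeping the sign and determinant bookkeeping straight across the two nested summations — in particular, the factor $\det(\rho)$ appearing in \eqref{e:reflexive} only materializes after passing to the dual side via the identity above — and correctly isolating the boundary contributions of the faces $\{0\}$ and $C$; the reflexivity hypothesis itself is used only through the single symmetry \eqref{e:reflex}.
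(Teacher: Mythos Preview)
Your proposal is correct and follows essentially the same route as the paper: substitute Theorem~\ref{t:formula} into Proposition~\ref{p:stringy} (with $\phi_\tau=\phi_{F^*}$ from reflexivity), interchange the two summations, recognize the inner sum as $\tilde{S}_{\Gamma_{F'}}((F')^*,uv)$ via Definition~\ref{d:stilde} and the identity $\det\rho_F=\det\rho\cdot\det\rho_{F^*}$, and then handle the residual identity $\sum_{F\ne\{0\}}\phi_{F^*}[uv]\,E_{\Gamma_F}(T_F)=\phi_{\hat C}[uv]$ using Corollary~\ref{c:technical} for $\hat C$, the factorization from Lemma~\ref{l:ratio}, and the reflexivity symmetry \eqref{e:reflex}. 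The only organizational difference is that the paper peels off the $F'=\{0\}$ term at the start (grouping it with the torus terms) rather than carrying it as a leftover, but the substance of the argument is the same.
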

\begin{proof}
Let $v^*$ be the unique interior lattice point in $P^*$. If $N = \Hom (M, \Z)$, then the dual lattice admits  a 
$\Gamma$-equivariant isomorphism $(\Z^{d + 1})^* \cong N \oplus (\Z \cdot v^*)$, and projection onto the first co-ordinate gives $\Gamma$-equivariant isomorphisms between the semi-groups
$F^* \cap (\Z^{d + 1})^*$ and $\tau \cap N$, where $F^*$ runs over the faces of the dual cone $\hat{C}$, 
and $\tau$ runs over the cones of the normal fan to $P$. 
Hence, Proposition~\ref{p:stringy} and Theorem~\ref{t:formula} imply that for every $\gamma \in \Gamma$, 
$E_{\st, \Gamma}(X)(\gamma)$ is equal to  
\begin{align*}
(1/uv) &\sum_{ \substack{ 0 \ne F \subseteq C \\ \gamma \cdot F = F } } \phi_{F^*}[uv](\gamma)
[  E_{\Gamma_F}(T_F)(\gamma) \\
 &+ (-1)^{\dim F}    \det \rho_F(\gamma)      
\sum_{ \substack{ \tilde{F} \subseteq F  \\ \gamma \cdot \tilde{F}  = \tilde{F}  }  } u^{\dim \tilde{F} } \det \rho_{\tilde{F} }(\gamma)  \tilde{S}_{\Gamma_{\tilde{F}}}(\tilde{F} ,u^{-1}v)(\gamma)  G(F/\tilde{F} ,uv)(\gamma)  ].
\end{align*}
 After rearranging, this expression becomes
\begin{align*}
&(1/uv) \sum_{ \substack{ 0 \ne F \subseteq C \\ \gamma \cdot F = F } } \phi_{F^*}[uv](\gamma)
[  E_{\Gamma_F}(T_F)(\gamma)  + (-1)^{\dim F}    \det \rho_F(\gamma) G(F,uv)(\gamma)  ]    \\
 &+ (1/uv) \sum_{ \substack{ 0 \ne \tilde{F}  \subseteq C \\ \gamma \cdot \tilde{F}  = \tilde{F}  } }
 u^{\dim \tilde{F} } \det \rho_{\tilde{F} }(\gamma)  \tilde{S}_{\Gamma_{\tilde{F}}}(\tilde{F} ,u^{-1}v)(\gamma) \sum_{ \substack{ \tilde{F}  \subseteq F  \\ \gamma \cdot F = F }  } \phi_{F^*}[uv](\gamma) (-1)^{\dim F}    \det \rho_F(\gamma)   G(F/\tilde{F} ,uv)(\gamma).
 \end{align*}
 On the other hand, by Definition~\ref{d:stilde}, for a non-zero face $\tilde{F}$ of $C$, 
\[
\tilde{S}_{\Gamma_{\tilde{F}}}(\tilde{F}^*,t)(\gamma) = \sum_{ \substack{ \tilde{F}  \subseteq F  \\ \gamma \cdot F = F }  }
(-1)^{\dim F - \dim \tilde{F}} \det \rho_{F^*}(\gamma) \phi_{F^*}[t](\gamma) G(F/\tilde{F} ,t)(\gamma).
\] 
Substituting into our previous expression, and using the fact that $\det (\rho_F) \det (\rho_{F^*})  = \det (\rho)$ (Lemma~\ref{l:ratio}), gives 
\begin{align*}
&(1/uv) \sum_{ \substack{ 0 \ne F \subseteq C \\ \gamma \cdot F = F } } \phi_{F^*}[uv](\gamma)
[  E_{\Gamma_F}(T_F)(\gamma)  + (-1)^{\dim F}    \det \rho_F(\gamma) G(F,uv)(\gamma)  ]    \\
 &+ (1/uv)   \det \rho(\gamma) \sum_{ \substack{ 0 \ne \tilde{F}  \subseteq C \\ \gamma \cdot \tilde{F}  = \tilde{F}  } }
   (-u)^{\dim \tilde{F} } \det \rho_{\tilde{F} }(\gamma)  \tilde{S}_{\Gamma_{\tilde{F}}}(\tilde{F} ,u^{-1}v)(\gamma)  \tilde{S}_{\Gamma_{\tilde{F}}}(\tilde{F}^*,uv)(\gamma).
 \end{align*}
Comparing with the right hand side of
\eqref{e:reflexive}, we see that it remains to show that
\[
\det \rho(\gamma) \tilde{S}_\Gamma(\hat{C}, uv)(\gamma) =  \sum_{ \substack{ 0 \ne F \subseteq C \\ \gamma \cdot F = F } } \phi_{F^*}[uv](\gamma)
[  E_{\Gamma_F}(T_F)(\gamma)  + (-1)^{\dim F}    \det \rho_F(\gamma) G(F,uv)(\gamma)  ].
\]
On the other hand, by Definition~\ref{d:stilde},
\[
 \det \rho(\gamma) \tilde{S}_\Gamma(\hat{C}, uv)(\gamma) = \sum_{\gamma \cdot F = F} (-1)^{\dim F}  \det \rho_{F}(\gamma)
\phi_{F^*}[uv](\gamma) G(F,uv)(\gamma).
\]
After comparing these two expressions, we are left with proving that
\[
\phi_{\hat{C}}[uv](\gamma) =  \sum_{ \substack{ 0 \ne F \subseteq C \\ \gamma \cdot F = F } } \phi_{F^*}[uv](\gamma)
 E_{\Gamma_F}(T_F)(\gamma).
\]
By Example~\ref{e:tori} and then Lemma~\ref{l:ratio} and Corollary~\ref{c:technical}, 
\begin{align*}
\sum_{ \substack{ 0 \ne F \subseteq C \\ \gamma \cdot F = F } } \phi_{F^*}[uv](\gamma)
 E_{\Gamma_F}(T_F)(\gamma) &=   \frac{1}{uv - 1} \sum_{ \substack{ 0 \ne F \subseteq C \\ \gamma \cdot F = F } } \phi_{F^*}[uv](\gamma)
 \det(uvI - \rho_F(\gamma) )\\
 &=  \frac{1}{uv - 1} ( (uv)^{d + 1} \phi_{\hat{C}} [ (uv)^{-1} ](\gamma) - \phi_{\hat{C}} [ uv ](\gamma) ).
\end{align*}
By \eqref{e:reflex}, the latter sum is equal to $\phi_{\hat{C}}[uv](\gamma)$, as desired.
\end{proof}



\section{Applications to mirror symmetry}\label{s:mirror}

The goal of this section is to present a representation-theoretic version of Batyrev-Borisov mirror symmetry. 

We continue with the notation of the previous section, and let $C$ and $\hat{C}$ be dual, $\Gamma$-invariant cones over reflexive polytopes $P$ and $P^*$ respectively. We may and will assume that the representation $\rho: \Gamma \rightarrow \GL_{d + 1}(\Z)$ is effective. 
If $Y$ (respectively $Y^*$) denotes the projective toric variety corresponding to the normal fan of $P$ (respectively $P^*$), then $Y$ and $Y^*$ are Fano i.e. their anti-canonical divisors are ample. 
 Let  $X$ (respectively $X^*$) be a $\Gamma$-invariant hypersurface of $Y$ (respectively $Y^*$) which is non-degenerate with respect to $P$ (respectively $P^*$). Then $X$ and $X^*$ are \define{Calabi-Yau 
varieties}. That is, by the adjunction formula, $X$ and $X^*$ have trivial canonical divisors, and, since $Y$ and $Y^*$ have at worst  canonical singularities, $X$ and $X^*$ have at worst canonical singularities \cite[Theorem~1.4]{TevCompactifications}. Batyrev and Borisov proved the following version of mirror symmetry in \cite{BBMirror},
\[
E_{\st}(X; u,v) = (-u)^{d - 1} E_{\st}(X^*; u^{-1},v),
\]
where the stringy invariants of $X$ and $X^*$ are defined by Definition~\ref{d:esi} when $\Gamma$ is trivial. 
In particular, if there exist  crepant, toric resolutions $\tilde{X} \rightarrow X$ and $\tilde{X}^* \rightarrow X^*$,
then $E_{\st}(X) = E(\tilde{X})$, $E_{\st}(X^*) = E(\tilde{X}^*)$, and, by Property~\eqref{e:prop1} of the Hodge-Deligne polynomial, 
\[
\dim H^{p,q}(\tilde{X}) = \dim H^{d - 1 - p,q}(\tilde{X}^*) \: \textrm{  for  } \: 0 \le p,q \le d - 1. 
\]
The following representation-theoretic version of mirror symmetry was conjectured in \cite[Conjecture~9.1]{YoRepresentations}.
\begin{theorem}\label{t:mirror}
Let $\Gamma$ be a finite group, 
with integer-valued representation
$\rho: \Gamma \rightarrow \GL_{d + 1}(\Z)$.
Let $C$ and $\hat{C}$ be dual, $\Gamma$-invariant cones over reflexive polytopes $P$ and $P^*$ respectively, and let $X$ and $X^*$ be corresponding $\Gamma$-invariant, non-degenerate, Calabi-Yau hypersurfaces. Then
\[
E_{\st, \Gamma}(X; u,v) = (-u)^{d - 1} \det(\rho) \cdot  E_{\st, \Gamma}(X^*; u^{-1},v).
\]
\end{theorem}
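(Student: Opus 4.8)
The plan is to reduce the theorem to the explicit formula for the equivariant stringy invariant of a non-degenerate Calabi-Yau hypersurface with respect to a reflexive polytope, namely \eqref{e:reflexive} in Corollary~\ref{c:reflexive}. Applying that corollary to both $X$ (with cone $C$ and reflexive polytope $P$) and $X^*$ (with cone $\hat C$ and reflexive polytope $P^*$), we get
\[
E_{\st, \Gamma}(X; u,v) = \frac{\det(\rho)}{uv} \sum_{ [F] \in C/\Gamma } (-u)^{\dim F}  \Ind^{\Gamma}_{\Gamma_F}  \det (\rho_F) \tilde{S}_{\Gamma_F}(F, u^{-1}v) \tilde{S}_{\Gamma_F}(F^*, uv),
\]
and the analogous formula for $E_{\st,\Gamma}(X^*;u,v)$ obtained by running the sum over $[F^*] \in \hat C/\Gamma$, with $\det(\rho_{F^*})$ and factors $\tilde{S}_{\Gamma_{F^*}}(F^*, u^{-1}v)\tilde{S}_{\Gamma_{F^*}}(F, uv)$.

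The core of the argument is then a bookkeeping step: substitute $u \mapsto u^{-1}$ into the formula for $E_{\st,\Gamma}(X^*;u,v)$, multiply by $(-u)^{d-1}\det(\rho)$, and compare term by term with $E_{\st,\Gamma}(X;u,v)$ under the inclusion-reversing bijection $F \leftrightarrow F^*$ between faces of $C$ and faces of $\hat C$, which satisfies $\dim F + \dim F^* = d+1$ and $\Gamma_F = \Gamma_{F^*}$. First I would record that $\det(\rho)^2$ is trivial (it takes values in $\{\pm 1\}$) and that $\det(\rho_F)\det(\rho_{F^*}) = \det(\rho)$ by Lemma~\ref{l:ratio}; these identities let the two copies of $\det(\rho)$ and the appearance of $\det(\rho_{F^*})$ versus $\det(\rho_F)$ be absorbed. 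Next I would track the powers of $u$: the $F^*$-term of $(-u)^{d-1}\det(\rho)E_{\st,\Gamma}(X^*;u^{-1},v)$ carries, after clearing the $1/(u^{-1}v)$, a factor $(-u)^{d-1}(-u^{-1})^{\dim F^*} u = (-u)^{d+1-\dim F^*}(-1)^{?}$, and using $\dim F^* = d+1-\dim F$ this should match the $(-u)^{\dim F}$ and the $1/(uv)$ appearing in the $F$-term of $E_{\st,\Gamma}(X;u,v)$, up to the sign $(-1)^{d+1}$ which is supplied by pairing $(-u)^{d-1}$ with the surviving factors. Finally, the arguments of the $\tilde S$-polynomials: the $F^*$-term of $E_{\st,\Gamma}(X^*;u^{-1},v)$ involves $\tilde{S}_{\Gamma_{F^*}}(F^*, (u^{-1})^{-1}v) = \tilde{S}_{\Gamma_{F^*}}(F^*, uv)$ and $\tilde{S}_{\Gamma_{F^*}}(F, (u^{-1})(v)) = \tilde S_{\Gamma_{F^*}}(F, u^{-1}v)$; since $\Gamma_{F^*} = \Gamma_F$ these are exactly the two $\tilde S$-factors in the $F$-term of $E_{\st,\Gamma}(X;u,v)$, possibly with the roles of the two factors swapped, which is harmless since they are multiplied together.

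Concretely, the main obstacle I expect is the sign and exponent bookkeeping — making sure the $(-1)$'s from $(-u)^{\dim F}$, $(-u)^{d-1}$, and the parity shift $\dim F^* = d+1 - \dim F$ all cancel coherently across the bijection, and confirming that no spurious factor of $uv$ or $u$ is left over after dividing by $uv$ on one side and by $u^{-1}v$ on the other. There is no deep input needed beyond Corollary~\ref{c:reflexive} and Lemma~\ref{l:ratio}; once the formula \eqref{e:reflexive} is granted, the proof is an exercise in substitution and reindexing, which is presumably why the authors are able to replace Batyrev--Borisov's intersection-cohomology input with pure combinatorics. I would also note for completeness that Corollary~\ref{c:reflexive} already builds in the individual symmetry $E_{\st,\Gamma}(X;u,v) = (uv)^{d-1}E_{\st,\Gamma}(X;u^{-1},v^{-1})$ (remarked after Proposition~\ref{p:stringy}), so one could alternatively phrase the comparison as: apply this known symmetry to $X^*$ to rewrite the target, then match with $X$; but the direct term-by-term comparison via $F \leftrightarrow F^*$ is cleaner and is the route I would write up.
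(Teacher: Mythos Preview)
Your proposal is correct and follows essentially the same route as the paper: apply Corollary~\ref{c:reflexive} to both $X$ and $X^*$, use the bijection $F \leftrightarrow F^*$ with $\dim F + \dim F^* = d+1$ and $\Gamma_F = \Gamma_{F^*}$, and cancel signs and determinant factors via $\det(\rho_F)\det(\rho_{F^*}) = \det(\rho)$ (Lemma~\ref{l:ratio}) together with $\det(\rho)^2 = 1$. The paper compresses the bookkeeping you were worried about into a single line, rewriting $(-u)^{\dim F}/(uv)$ as $(-u)^{d-1}\cdot (u/v)\cdot (-u)^{-\dim F^*}$ and then recognizing the result as $(-u)^{d-1}\det(\rho)\cdot E_{\st,\Gamma}(X^*;u^{-1},v)$; no additional ideas are needed.
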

\begin{proof}
By Corollary~\ref{c:reflexive}, and using the fact that $\det (\rho_F)\det(\rho_{F^*}) = \det(\rho)$ (Lemma~\ref{l:ratio}) and $\det(\rho)^2 = 1$, 
\begin{align*}
E_{\st, \Gamma}(X; u,v) &= \frac{\det(\rho)}{uv} \sum_{ [F] \in C/\Gamma } (-u)^{\dim F}  \Ind^{\Gamma}_{\Gamma_F} \det (\rho_F) \tilde{S}_{\Gamma_F}(F, u^{-1}v) \tilde{S}_{\Gamma_F}(F^*, uv) \\
&= (-u)^{d - 1}  \frac{u}{v} \sum_{ [F] \in C/\Gamma } (-u)^{-\dim F^*}  \Ind^{\Gamma}_{\Gamma_F} \det(\rho_{F^*})  \tilde{S}_{\Gamma_F}(F^*, uv)\tilde{S}_{\Gamma_F}(F, u^{-1}v) \\
&=  (-u)^{d - 1} \det(\rho) \cdot  E_{\st, \Gamma}(X^*; u^{-1},v).
\end{align*}
\end{proof}

Suppose that there exist $\Gamma$-equivariant, crepant, toric resolutions 
$\tilde{X} \rightarrow X$ and $\tilde{X}^* \rightarrow X^*$. 
Then Theorem~\ref{t:mirror}, Proposition~\ref{p:stringy} and  Property~\eqref{e:prop1} of the equivariant Hodge-Deligne polynomial imply that 
\begin{equation}\label{e:food}
H^{p,q}(\tilde{X}) = \det(\rho) \cdot H^{d - 1 - p,q}(\tilde{X}^*)  \in R(\Gamma) \: \textrm{  for  } \: 0 \le p,q \le d - 1. 
\end{equation}
The projection $\tilde{X} \rightarrow \tilde{X}/\Gamma$ induces an isomorphism
 $H^*(\tilde{X})^\Gamma \cong  H^*(\tilde{X}/\Gamma)$ which preserves Hodge structures, where 
$H^*(\tilde{X})^\Gamma$ denotes the $\Gamma$-invariant isotypic component of  $H^*(\tilde{X})$. 
In particular, if $\det(\rho)$ is the trivial representation, then 
 the (possibly singular) varieties $\tilde{X}/\Gamma$ and $\tilde{X}^*/\Gamma$ 
have mirror Hodge diamonds i.e. 
\begin{equation}\label{e:orbimirror}
\dim H^{p,q}(\tilde{X}/\Gamma) = \dim H^{d - 1 - p,q}(\tilde{X}^*/\Gamma). 
\end{equation}
This produces infinitely many new examples of (possibly singular) pairs of varieties with mirror Hodge diamonds. We refer the reader to Section~\ref{s:central} and Section~\ref{s:quintic} for explicit examples. 

\begin{remark}\label{r:sl}
 If 
 $\det(\rho)$ is the trivial representation, then $\Gamma$ acts trivially on 
 $H^{d - 1, 0}(\tilde{X}) = H^0(\tilde{X}, \Omega^{d -1}_{\tilde{X}}) \cong \C$ \cite[Corollary~6.8]{YoRepresentations}. In particular, $\Gamma$ leaves global differential forms of $\tilde{X}$ invariant, and $\tilde{X}/\Gamma$ has trivial canonical divisor. By the Luna Slice Theorem \cite[I.2.1]{AudTorus}, if $x \in \tilde{X}$ is fixed by $\gamma \in \Gamma$, then $\gamma$ acts on the tangent space $T_x \tilde{X}$ with determinant $1$. 
\end{remark}

\begin{question}
Assume that $\det(\rho)$ is the trivial representation. What are the singularities of $\tilde{X}/\Gamma$? When does it have canonical singularities i.e. when is it a Calabi-Yau variety? When does it admit a crepant resolution?
\end{question}

This result suggests the following McKay-type correspondence. Assume that $\det(\rho)$ is the trivial representation. Suppose that $Z \rightarrow \tilde{X}/\Gamma$ and $Z^* \rightarrow \tilde{X}^*/\Gamma$ are crepant resolutions. Then we expect that the Hodge diamonds of the Calabi-Yau manifolds $Z$ and $Z^*$ are mirror. As before, this is a question about the stringy invariants of  $\tilde{X}/\Gamma$ and  $\tilde{X}^*/\Gamma$. Since the latter varieties have orbifold singularities, results of Yasuda \cite{YasMotivic} imply that their stringy invariants equal their orbifold Hodge-Deligne polynomials. That is, the dimension of $H^{p,q}(Z)$ equals the dimension of the graded piece
$H^{p,q}_{\orb}(\tilde{X}^*/\Gamma)$
of the Chen-Ruan orbifold cohomology ring of $\tilde{X}^*/\Gamma$ \cite{CRNew}.  We will formulate a precise conjecture 
(Conjecture~\ref{c:Fantechi}) below. 

\begin{question}
Does there exist a homological mirror symmetry correspondence between $Z$ and $Z^*$?
This is suggested by the idea that the derived category of coherent $\Gamma$-sheaves of $\tilde{X}$ should be  equivalent to the derived category of coherent sheaves on $Z$  (cf. \cite{BKRMcKay}).
\end{question}

In \cite{FGOrbifold}, Fantechi and G\"ottsche introduced a graded non-commutative ring $H^*(Y, G)$ with $G$-action associated to an (effective) action of a finite group $G$ on a complex, compact manifold $Y$.
We describe $H^*(Y, G)$ as a graded, complex $G$-representation below, and refer the reader to  Section~1 in \cite{FGOrbifold} for details. 
As an ungraded complex vector space,
\[
H(Y,G) := \bigoplus_{g \in G} H^*(Y^g),
\]
where $Y^g$ denotes the fixed locus of $Y$, a smooth, possibly disconnected, submanifold (see, for example, \cite[Lemma~4.1]{DonLefschetz}). An element $h \in G$ induces an isomorphism $h: Y^g \rightarrow Y^{hgh^{-1}}$, and hence an isomorphism $h_*: H^*(Y^g) \rightarrow H^*(Y^{hgh^{-1}})$.  The latter isomorphisms combine to give an action of $G$ on $H(Y,G)$. Suppose that  $g \in G$ fixes $x \in Y$ and acts on the tangent space 
$T_x Y$ with eigenvalues $\{ e^{2 \pi i \alpha_j} \}_j$ for some $0 \le \alpha_j < 1$. Then the \emph{age} $a(g,x) = a(g,T)$ is equal to 
$\sum_j \alpha_j \in \Q$, and only depends on the connected component $T$ of $x$ in $Y^g$. The Hodge structure on $H^*(Y,G)$ is given by the identification
\[
H(Y,G) = \bigoplus_{g \in G} \bigoplus_{ T \subseteq Y^g} \bigoplus_{p,q} H^{p,q} (Y^g)(-a(g,T)),
\]
where $T$ runs over the connected components of $Y^g$, and $H^{p,q} (Y^g)(-a(g,T))$ has type 
$(p + a(g,T), q + a(g,T))$.  The latter identification induces a $\Q$-grading on $H(Y,G)$.  Observe that the action of $G$ on $H(Y,G)$ preserves the Hodge structure, and hence we may regard the $(p,q)^{\textrm{th}}$ component $H^{p,q}(Y,G)$ of $H(Y,G)$ as a $G$-representation. 

\begin{remark}\label{r:det1}
Suppose that $g \in G$ fixes $x \in Y$ and acts on the tangent space 
$T_x Y$ with determinant $1$. Then the age $a(g,x)$ is a non-negative integer. By Remark~\ref{r:sl}, if $\det(\rho)$ is the trivial representation, then
\[
H(\tilde{X}, \Gamma) = \bigoplus_{p,q} H^{p,q}(\tilde{X}, \Gamma), 
\]
where $p,q$ are non-negative integers. 
\end{remark}

\begin{remark}\cite[Remark~1.4]{FGOrbifold}\label{r:orbifold}
We have an isomorphism of Hodge structures between the invariant subspace $H(Y,G)^G$ and the Chen-Ruan orbifold cohomology ring \cite{CRNew}
\[
H^*_{\orb}(Y/G) = \bigoplus_{g \in S} H^*(Y^g/C(g)),
\]
where $S$ is a set of conjugacy class representatives of $G$, and $C(g)$ denotes the centralizer of $g$ in $G$. In particular, if $Z \rightarrow Y/G$ is a crepant resolution, then we have an isomorphism of Hodge structures $H^*(Z) \cong H(Y/G)^G$.
\end{remark}

\begin{conjecture}\label{c:Fantechi}
Assume that $\det(\rho)$ is the trivial representation, and  there exist  crepant, equivariant,  toric resolutions $\tilde{X} \rightarrow X$ and $\tilde{X}^* \rightarrow X^*$ of the mirror Calabi-Yau varieties $X$ and $X^*$ associated to the $d$-dimensional reflexive polytopes $P$ and $P^*$ respectively. Then we have an isomorphism of $\Gamma$-representations
\[
H^{p,q}(\tilde{X},\Gamma) \cong H^{d - 1 - p,q}(\tilde{X}^*,\Gamma),
\] 
for all pairs of non-negative integers $p,q$. In particular, $H^{p,q}_{\orb}(\tilde{X}/\Gamma) \cong H^{d - 1 - p,q}_{\orb}(\tilde{X}^*/\Gamma)$, and if there exist crepant resolutions $Z \rightarrow \tilde{X}/\Gamma$ and $Z^* \rightarrow \tilde{X}^*/\Gamma$, then $Z$ and $Z^*$ are $(d - 1)$-dimensional Calabi-Yau manifolds with mirror Hodge diamonds.
\end{conjecture}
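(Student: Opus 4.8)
The orbifold-cohomology statement and the statement about crepant resolutions $Z$, $Z^*$ in Conjecture~\ref{c:Fantechi} are formal consequences of its first assertion, the isomorphism $H^{p,q}(\tilde X,\Gamma)\cong H^{d-1-p,q}(\tilde X^*,\Gamma)$ of $\Gamma$-representations: taking $\Gamma$-invariants and applying Remark~\ref{r:orbifold} yields the identity of Chen--Ruan orbifold Hodge numbers, and then Remark~\ref{r:orbifold} (identifying $H^*(Z)$ with the $\Gamma$-invariants of the Fantechi--G\"ottsche ring) together with the triviality of the canonical divisor of $\tilde X/\Gamma$ recorded in Remark~\ref{r:sl} shows that any crepant resolutions $Z$, $Z^*$ are Calabi--Yau manifolds with mirror Hodge diamonds. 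So the plan is to prove the first assertion. I would package it as the identity $E^{\mathrm{FG}}_\Gamma(\tilde X;u,v)=(-u)^{d-1}E^{\mathrm{FG}}_\Gamma(\tilde X^*;u^{-1},v)$, where $E^{\mathrm{FG}}_\Gamma(W;u,v):=\sum_{p,q}(-1)^{p+q}H^{p,q}(W,\Gamma)u^pv^q$ (the exponents are non-negative integers by Remark~\ref{r:det1}); its untwisted, i.e. $\gamma=1$, part is exactly the content of \eqref{e:food} with $\det(\rho)$ trivial, so the problem reduces to matching the \emph{twisted sectors}.

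The core is a combinatorial formula for $E^{\mathrm{FG}}_\Gamma(\tilde X)$ in the spirit of Proposition~\ref{p:stringy}. By the definition of the Fantechi--G\"ottsche ring, $E^{\mathrm{FG}}_\Gamma(\tilde X)(h)=\sum_{\gamma\in C(h)}\sum_{T\subseteq\tilde X^\gamma}(uv)^{a(\gamma,T)}E_{\langle h\rangle}(T;u,v)(h)$, the inner sum over connected components $T$ of the $\gamma$-fixed locus, with $a(\gamma,T)$ the Chen--Ruan age. The geometric input I would establish is that, for each $\gamma\in\Gamma$, the $\gamma$-fixed locus of $\tilde X$ --- with its residual $C(\gamma)$-action --- is a disjoint union of closures of non-degenerate hypersurfaces in Gorenstein toric varieties attached to the $\gamma$-fixed sublattice $M^\gamma$ and suitable polytopes therein, and likewise for $(\tilde X^*)^\gamma$ over the fixed sublattice $N^\gamma$; this should follow by restricting the $\Gamma$-equivariant toric resolution $\tilde X\to X$ to $\gamma$-fixed loci, which are themselves toric, and reading off $a(\gamma,T)$ from the discrepancies, a toric instance of the motivic McKay correspondence (cf. Yasuda \cite{YasMotivic} and Remark~\ref{r:motivic}).

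Granting this, I would feed each twisted sector into the machinery of the present paper --- Theorem~\ref{t:formula}, Corollary~\ref{c:reflexive} and Theorem~\ref{t:mirror}, by induction on $d$ --- applied to the lower-dimensional pair of toric Calabi--Yau hypersurfaces over the fixed sublattices $M^\gamma$ and $N^\gamma$ with their $C(\gamma)$-actions, obtaining an equivariant mirror identity over each centralizer. Summing over conjugacy classes and inducing via $\Ind_{C(\gamma)}^\Gamma$ as in Proposition~\ref{p:induced}, the age exponents $(uv)^{a(\gamma,T)}$, subject to the complementary relation $a(\gamma,T)+a(\gamma^{-1},T)=\codim_{\tilde X}T$, should be exactly what converts the lower-dimensional symmetry into the dimension-$(d-1)$ symmetry for $E^{\mathrm{FG}}_\Gamma$; what then remains is to check that the determinant characters $\det(\rho|_{M^\gamma})$ of $C(\gamma)$ produced by the lower-dimensional mirror theorem cancel after induction, which is a character computation.

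The main obstacle is the geometric input of the second paragraph, intertwined with the twisted-sector bookkeeping. The $\gamma$-fixed subpolytope of a reflexive polytope need not itself be reflexive --- because passing to the $\gamma$-fixed subspace and dualizing do not commute on cones, the $\psi_\Sigma=1$ slice of the fixed subcone can degenerate --- so one must work in the Gorenstein-cone generality of Batyrev--Borisov \cite{BBMirror} rather than the reflexive setting of Corollary~\ref{c:reflexive}, and re-derive there the parts of the machinery that are needed (this is plausible since Theorem~\ref{t:formula} already holds for arbitrary lattice polytopes). Equally delicate is showing that the chosen equivariant toric resolution restricts to the fixed loci with discrepancies reproducing the Chen--Ruan ages, and that the resulting twisted contributions assemble --- ages, $C(\gamma)\to\Gamma$ inductions, and determinant characters on the fixed sublattices all accounted for --- into a single self-mirror generating function; this packaging is where the real work lies, while the final reassembly is formal. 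Failing a uniform argument, one can instead verify Conjecture~\ref{c:Fantechi} in examples, as is done for the quintic in Section~\ref{s:quintic}.
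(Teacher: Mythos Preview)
The statement you are addressing is a \emph{conjecture} in the paper, not a theorem: the paper does not supply a general proof. Its treatment consists only of verifications in two families. In Section~\ref{s:central} the action of $\Z_2$ by $-I$ is shown to be free on $\tilde X$ and $\tilde X^*$, so the twisted sectors vanish and the conjecture collapses to \eqref{e:food}. In Section~\ref{s:quintic} the conjecture is checked for the Fermat quintic and each subgroup $\Gamma\subseteq A_5$ by hand: the fixed loci $\tilde X^\gamma$ and $(\tilde X^*)^\gamma$ are described concretely, their Hodge structures and ages are read off, and the two diamonds of $A_5$-representations are compared directly. There is no inductive or structural argument of the kind you sketch.

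Your proposal is therefore not a competing proof but a proposed research programme, and you are candid about its gaps. Two of the difficulties are more serious than your write-up suggests. First, your assertion that the $\gamma$-fixed locus of $\tilde X$ is a disjoint union of closures of non-degenerate hypersurfaces in Gorenstein toric varieties over $M^\gamma$ is not what one sees even in the paper's own example: for $\gamma=(12)(34)$ in Section~\ref{s:quintic}, $(\tilde X^*)^\gamma$ consists of a genus~$2$ curve \emph{together with} five $\P^1$'s and, for $\gamma=(123)$, four $\P^1$'s and two isolated points, all arising from exceptional fibres of the toric crepant resolution. These extra components are essential to the match and depend on the chosen resolution, so the geometric input you need is considerably more intricate than restricting a single hypersurface. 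Second, your inductive use of Theorem~\ref{t:mirror} on the fixed sublattices requires a reflexive (or at least Gorenstein-cone) duality between $M^\gamma$ and $N^\gamma$; but $(M^\gamma)^\vee$ is $N_\gamma=N/\langle \gamma e-e\rangle$, not $N^\gamma$, so the mirror you obtain by restricting to $N^\gamma$ is not a priori the dual of what you get on the $M$-side. Sorting out this mismatch, and the attendant determinant characters, is the heart of the problem and is exactly why the paper leaves the statement as a conjecture.
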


In the remainder of this section, we present a weaker form of the above conjecture. We will need the following well-known lemma (see, for example, \cite[(30)]{CMSSEquivariant}), which is extremely useful in computations. 

\begin{lemma}[Lefschetz fixed-point formula]\label{l:fixed}
Suppose that a finite group $G$ acts on a manifold $Y$. Consider the Euler characteristic $\chi(Y) =
\sum_i (-1)^i H^i(Y)$ as an element of the complex representation ring $R(G)$. Then, for each $g$ in $G$,  the topological Euler characteristic $\chi(Y^g)$ of the fixed locus $Y^g$ is equal to the evaluation $\chi(Y)(g)$ of the associated virtual character of $\chi(Y)$ at $g$. 
\end{lemma}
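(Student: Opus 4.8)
The plan is to recognize the virtual-character value
$\chi(Y)(g) = \sum_i (-1)^i \tr\!\big(g^* \mid H^i(Y)\big)$
as the classical Lefschetz number $L(g)$ of the automorphism $g\colon Y\to Y$, and then to use the fact that $g$ has finite order to evaluate $L(g)$ as $\chi(Y^g)$. (We may assume $Y$ is compact, which is the only case used in this paper; for a general manifold one replaces ordinary cohomology by compactly supported cohomology.) First I would average a Riemannian metric over the cyclic group $\langle g\rangle$, so that $g$ acts as an isometry. In geodesic normal coordinates around a point $x\in Y^g$ the map $g$ is then linear, so $Y^g$ is a smooth submanifold with $T_xY^g = \ker(I - dg_x)$; in particular $g$ has no fixed points off $Y^g$ and the fixed set is \emph{clean}, since $g$ acts on each normal space with eigenvalues that are roots of unity different from $1$.

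Next I would invoke the clean-fixed-submanifold form of the Lefschetz--Hopf formula:
\[
L(g) = \sum_{T} \operatorname{sign}\det\!\big(I - dg|_{\nu T}\big)\,\chi(T),
\]
the sum running over the connected components $T$ of $Y^g$, where $\nu T$ denotes the normal bundle. The remaining point is purely linear-algebraic: the eigenvalues of $dg|_{\nu T}$ are roots of unity $\lambda\ne 1$, which on the real bundle $\nu T$ occur in complex-conjugate pairs contributing $|1-\lambda|^2>0$ together with possibly some eigenvalues $-1$, each contributing a factor $2$; hence $\det(I - dg|_{\nu T})>0$, every local index equals $+1$, and $L(g) = \sum_T \chi(T) = \chi(Y^g)$.

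An alternative, more elementary route bypasses the smooth structure: take a $\langle g\rangle$-equivariant triangulation of $Y$ and pass to its barycentric subdivision, after which any simplex fixed setwise by $g$ is fixed pointwise and the fixed simplices triangulate $Y^g$. Since the Euler characteristic of a bounded complex of $\mathbb{Q}[\langle g\rangle]$-modules is computed at the chain level, $\chi(Y) = \sum_i (-1)^i [C_i(Y;\mathbb{Q})]$ in $R(\langle g\rangle)$, and evaluating the character at $g$ gives $\sum_i (-1)^i \tr(g\mid C_i) = \sum_i (-1)^i \#\{\,i\text{-simplices of }Y^g\,\} = \chi(Y^g)$.

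I expect the main obstacle to be essentially bookkeeping: citing the right classical input cleanly --- either the clean-intersection version of the Lefschetz--Hopf formula, or the existence of a sufficiently regular equivariant triangulation in which cells fixed setwise are fixed pointwise and the fixed cells triangulate $Y^g$ (a barycentric subdivision of an invariant triangulation suffices) --- and checking that the relevant local signs are all $+1$, so that one recovers $\chi(Y^g)$ on the nose rather than a signed count. Both inputs are standard, and the smoothness of $Y^g$ needed for the first approach is supplied by the averaged metric.
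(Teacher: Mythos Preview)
Your argument is correct. Both routes you outline are standard and sound: the Riemannian approach correctly identifies $\chi(Y)(g)$ with the Lefschetz number and then uses positivity of $\det(I - dg|_{\nu T})$ (since the normal eigenvalues are roots of unity $\ne 1$, real ones contribute $2$ and complex ones contribute $|1-\lambda|^2>0$) to reduce the clean Lefschetz--Hopf formula to $\sum_T \chi(T)$; the simplicial approach is equally valid, and your observation that after barycentric subdivision a setwise-fixed simplex is pointwise-fixed (because its vertices are barycenters of simplices of strictly increasing dimension, hence cannot be permuted nontrivially) is the key point.

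The paper, however, does not prove this lemma at all: it is stated as a well-known result with a citation (to \cite[(30)]{CMSSEquivariant}) and no proof is given. So there is nothing to compare against --- you have supplied a complete proof where the paper simply appeals to the literature.
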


Suppose that $Y$ is a compact manifold with an (effective) action of a finite group $G$. Inspired by physics, Dixon et. al. introduced the \emph{orbifold Euler number} in \cite{DHVWStrings},
\begin{equation*}
\chi(Y,G) := \frac{1}{|G|} \sum_{gh = hg} \chi(Y^g \cap Y^h),
\end{equation*}
where the sum runs over all commuting pairs $(g,h)$, and  $\chi(Y^g \cap Y^h)$ is the topological Euler characteristic of $Y^g \cap Y^h$. 
Using the Lefschetz fixed point and after a short computation, this may be rewritten  as 
\begin{equation}\label{e:orby}
\chi(Y,G) = \sum_{g \in S} \chi(Y^g/C(g)),
\end{equation}
where $S$ is a set of conjugacy class representatives of $G$, and $C(g)$ denotes the centralizer of $g$ in $G$. In particular, suppose that whenever $g \in G$ fixes $x \in Y$, then $g$ acts on the tangent space 
$T_x Y$ with determinant $1$. By Remark~\ref{r:det1} and Remark~\ref{r:orbifold},
\[
\chi(Y,G) = \sum_i (-1)^i \dim H_{\orb}^i(Y/G), 
\]
and if $Z \rightarrow Y/G$ is a crepant resolution, then $\chi(Z) = \chi(Y,G)$. Conjecture~\ref{c:Fantechi} would immediately imply the following conjecture.

\begin{conjecture}\label{c:Euler}
Assume that $\det(\rho)$ is the trivial representation, and  there exist  crepant, toric resolutions $\tilde{X} \rightarrow X$ and $\tilde{X}^* \rightarrow X^*$ of the mirror Calabi-Yau varieties $X$ and $X^*$ associated to the $d$-dimensional  reflexive polytopes $P$ and $P^*$ respectively. Then
\[
\chi(\tilde{X},\Gamma) = (-1)^{d - 1}   \chi(\tilde{X}^*,\Gamma).
\]
In particular, if there exist crepant resolutions $Z \rightarrow \tilde{X}/\Gamma$ and $Z^* \rightarrow \tilde{X}^*/\Gamma$, then $\chi(Z) = (-1)^{d - 1}\chi(Z^*)$.  
\end{conjecture}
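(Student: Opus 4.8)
The plan is to deduce Conjecture~\ref{c:Euler} from the equivariant mirror identity of Theorem~\ref{t:mirror} by specialising the Hodge--Deligne variables, together with the Lefschetz fixed-point formula. Since $\det(\rho)$ is trivial, Theorem~\ref{t:mirror} and the crepant case of Proposition~\ref{p:stringy} give an equality of polynomials of virtual representations
\[
E_\Gamma(\tilde{X}; u,v) = (-u)^{d-1} E_\Gamma(\tilde{X}^*; u^{-1},v) \in R(\Gamma)[u,v].
\]
Setting $u = v = 1$ and using Property~\eqref{e:prop1} of the equivariant Hodge--Deligne polynomial, the left side becomes $\sum_{p,q}(-1)^{p+q}H^{p,q}(\tilde{X}) = \chi(\tilde{X}) \in R(\Gamma)$, so $\chi(\tilde{X}) = (-1)^{d-1}\chi(\tilde{X}^*)$ as virtual characters of $\Gamma$. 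The Lefschetz fixed-point formula (Lemma~\ref{l:fixed}) then yields $\chi(\tilde{X}^\gamma) = \chi(\tilde{X})(\gamma) = (-1)^{d-1}\chi(\tilde{X}^*)(\gamma) = (-1)^{d-1}\chi((\tilde{X}^*)^\gamma)$ for every $\gamma \in \Gamma$; averaging over $\Gamma$ already gives $\chi(\tilde{X}/\Gamma) = (-1)^{d-1}\chi(\tilde{X}^*/\Gamma)$ for the ordinary quotients.

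The genuine content is the orbifold correction. By the rewriting~\eqref{e:orby} of the Dixon--Harvey--Vafa--Witten orbifold Euler number, $\chi(\tilde{X},\Gamma) = \sum_{[\gamma]}\chi(\tilde{X}^\gamma/C(\gamma))$, so what one must actually produce is the equality $\chi(\tilde{X}^\gamma) = (-1)^{d-1}\chi((\tilde{X}^*)^\gamma)$ as virtual $C(\gamma)$-representations, not merely as numbers; equivalently, $\chi(\tilde{X}^\gamma\cap\tilde{X}^\delta) = (-1)^{d-1}\chi((\tilde{X}^*)^\gamma\cap(\tilde{X}^*)^\delta)$ for every commuting pair $\gamma,\delta$. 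This does \emph{not} follow formally from $\chi(\tilde{X}) = (-1)^{d-1}\chi(\tilde{X}^*)\in R(\Gamma)$ once $\langle\gamma,\delta\rangle$ is non-cyclic, since the stabiliser stratification shows $\chi(\tilde{X}^A)$ for non-cyclic $A$ is genuinely extra data. To supply it I would establish an equivariant mirror symmetry for the fixed loci directly: because $\Gamma$ acts on $\tilde{X}$ through toric automorphisms of the ambient toric resolution, each $\tilde{X}^\gamma$ is a disjoint union of non-degenerate hypersurfaces in toric varieties assembled from the $\gamma$-fixed subfan --- exactly the combinatorial data encoded by the posets $B_\gamma$ of $\gamma$-invariant faces and the polynomials $H(C,t)$, $G(C,t)$, $\tilde{S}_\Gamma(C,t)$ of Sections~\ref{s:cones} and~\ref{s:tilde}, now carrying the residual $C(\gamma)$-action. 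One then recomputes $E_{C(\gamma)}(\tilde{X}^\gamma)$ and $E_{C(\gamma)}((\tilde{X}^*)^\gamma)$ by a relative version of Theorem~\ref{t:formula} and Proposition~\ref{p:stringy}, and checks that they are mirror using the same polynomial identities that drive Theorem~\ref{t:mirror} (Lemmas~\ref{l:mobius}, \ref{l:once}, \ref{l:ratio}, \ref{l:convolution} and Proposition~\ref{p:symmetry}). Cleanest of all would be to prove the full Conjecture~\ref{c:Fantechi}, from which Conjecture~\ref{c:Euler} drops out by taking $\Gamma$-invariants and forming the alternating sum over $(p,q)$.

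I expect the main obstacle to be precisely the bookkeeping in this last step: identifying the $\gamma$-fixed toric geometry of $\tilde{X}^\gamma$ (including the components coming from the possibly disconnected subtorus $T^\gamma$), verifying that the chosen $\Gamma$-equivariant toric resolution restricts to a well-behaved resolution of these loci, and tracking the residual $C(\gamma)$-action through the combinatorial formulas. An orthogonal foundational route would be to run the whole argument inside equivariant motivic integration, as sketched in Remark~\ref{r:motivic}, combined with Yasuda's identification of stringy and orbifold invariants; but this still requires the twisted-sector information above, and moreover presupposes the independence of $E_{\st,\Gamma}$ from the choice of resolution, which is itself left open in the excerpt.
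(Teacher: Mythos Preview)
The statement you are addressing is labelled a \emph{conjecture} in the paper, and indeed the paper does \emph{not} prove it in general; there is no full proof to compare against. What the paper does is exactly what your analysis arrives at: it observes that Conjecture~\ref{c:Fantechi} would immediately imply Conjecture~\ref{c:Euler}, and then proves only the special case in which every non-trivial $\gamma\in\Gamma$ satisfies $C(\gamma)=\langle\gamma\rangle$. In that case the orbifold Euler number \eqref{e:orby} reduces to $\chi(\tilde{X}/\Gamma)+\sum_{[\gamma]\ne 1}\chi(\tilde{X}^\gamma)/|\langle\gamma\rangle|$, and both summands are controlled by the virtual character identity $\chi(\tilde{X})=(-1)^{d-1}\chi(\tilde{X}^*)\in R(\Gamma)$ via Lefschetz (Lemma~\ref{l:fixed}) --- precisely your ``easy'' argument.

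Your diagnosis of the obstruction is therefore spot on and matches the paper's implicit position: knowing $\chi(\tilde{X})\in R(\Gamma)$ gives $\chi(\tilde{X}^\gamma)$ only as a number, whereas \eqref{e:orby} requires it as a $C(\gamma)$-representation (equivalently, the double-fixed-locus Euler numbers $\chi(\tilde{X}^\gamma\cap\tilde{X}^\delta)$ for commuting pairs), and this extra data is genuinely not encoded in $R(\Gamma)$ once $\langle\gamma,\delta\rangle$ is non-cyclic. Your proposed attack via a $C(\gamma)$-equivariant mirror statement for the fixed loci $\tilde{X}^\gamma$ is a reasonable program, but it goes well beyond anything established in the paper and would amount to proving (a large part of) Conjecture~\ref{c:Fantechi}; the paper does not attempt this in general, instead verifying Conjecture~\ref{c:Fantechi} only in the explicit examples of Sections~\ref{s:central} and~\ref{s:quintic}. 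In short: your write-up is not a proof of the conjecture, but neither is there one in the paper, and your identification of exactly where and why the argument stalls agrees with the author's.
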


We verify this conjecture under a strong additional assumption, which holds, for example, when $\Gamma$ is a cyclic group of prime order. 



\begin{corollary}
Assume that $\det(\rho)$ is the trivial representation, and  there exist  crepant, toric resolutions $\tilde{X} \rightarrow X$ and $\tilde{X}^* \rightarrow X^*$ of the mirror Calabi-Yau varieties $X$ and $X^*$ associated to the $d$-dimensional  reflexive polytopes $P$ and $P^*$ respectively. 
Further assume that for each non-trivial element $\gamma \in \Gamma$, the centralizer of $\gamma$ satisfies $C(\gamma) = \langle \gamma \rangle$. Then
\[
\chi(\tilde{X},\Gamma) = (-1)^{d - 1}   \chi(\tilde{X}^*,\Gamma).
\]
\end{corollary}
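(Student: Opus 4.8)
The plan is to deduce the corollary from the stronger identity $\chi(\tilde{X}) = (-1)^{d-1}\chi(\tilde{X}^*)$ in the representation ring $R(\Gamma)$, where $\chi(\tilde{X}) = \sum_i (-1)^i H^i(\tilde{X})$ denotes the virtual character-valued Euler characteristic, and then to observe that the orbifold Euler number $\chi(\,\cdot\,,\Gamma)$ depends only on this class once the hypothesis $C(\gamma) = \langle\gamma\rangle$ is imposed. For the identity: since $\tilde{X}\to X$ and $\tilde{X}^*\to X^*$ are crepant, $\Gamma$-equivariant toric resolutions, Proposition~\ref{p:stringy} gives $E_{\st,\Gamma}(X) = E_\Gamma(\tilde{X})$ and $E_{\st,\Gamma}(X^*) = E_\Gamma(\tilde{X}^*)$; as $\tilde{X}$ and $\tilde{X}^*$ are smooth and complete, Property~\eqref{e:prop1} of the equivariant Hodge-Deligne polynomial gives $E_\Gamma(\tilde{X};u,v) = \sum_{p,q}(-1)^{p+q}H^{p,q}(\tilde{X})u^pv^q$, and specializing $u=v=1$ --- using that $\Gamma$ preserves the Hodge decomposition, so that $\bigoplus_{p+q=i}H^{p,q}(\tilde{X}) = H^i(\tilde{X})$ as $\Gamma$-modules --- yields $E_{\st,\Gamma}(X;1,1) = \chi(\tilde{X})$, and likewise $E_{\st,\Gamma}(X^*;1,1) = \chi(\tilde{X}^*)$. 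Since $\det(\rho)$ is trivial, Theorem~\ref{t:mirror} reads $E_{\st,\Gamma}(X;u,v) = (-u)^{d-1}E_{\st,\Gamma}(X^*;u^{-1},v)$; because $E_{\st,\Gamma}(X^*) = E_\Gamma(\tilde{X}^*)$ is an honest polynomial, the right side is a Laurent polynomial in $u$, so we may set $u=v=1$ to obtain $\chi(\tilde{X}) = (-1)^{d-1}\chi(\tilde{X}^*)$ in $R(\Gamma)$, hence $\chi(\tilde{X})(\gamma) = (-1)^{d-1}\chi(\tilde{X}^*)(\gamma)$ for all $\gamma\in\Gamma$.

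Next I would express the orbifold Euler numbers in terms of these values. By \eqref{e:orby}, $\chi(\tilde{X},\Gamma) = \sum_{g\in S}\chi(\tilde{X}^g/C(g))$ for a set $S$ of conjugacy-class representatives. The term for $g=1$ is $\chi(\tilde{X}/\Gamma)$, and since $H^*(\tilde{X}/\Gamma)\cong H^*(\tilde{X})^\Gamma$ this equals $\langle\chi(\tilde{X}),1\rangle_\Gamma = \frac{1}{|\Gamma|}\sum_{g\in\Gamma}\chi(\tilde{X}^g)$, which by the Lefschetz fixed-point formula (Lemma~\ref{l:fixed}) equals $\frac{1}{|\Gamma|}\sum_{g\in\Gamma}\chi(\tilde{X})(g)$. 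For $g\neq 1$ the hypothesis $C(g) = \langle g\rangle$ enters precisely here: $g$ acts trivially on its own fixed locus $\tilde{X}^g$, hence so does $\langle g\rangle = C(g)$, so $\tilde{X}^g/C(g) = \tilde{X}^g$ and the term is $\chi(\tilde{X}^g) = \chi(\tilde{X})(g)$ again by Lemma~\ref{l:fixed}. Thus $\chi(\tilde{X},\Gamma)$ is a fixed $\Q$-linear combination --- with coefficients depending only on $\Gamma$ --- of the quantities $\{\chi(\tilde{X})(g)\}_{g\in\Gamma}$, and the identical expression with $\tilde{X}^*$ in place of $\tilde{X}$ computes $\chi(\tilde{X}^*,\Gamma)$. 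Combining with the previous paragraph, term-by-term comparison using $\chi(\tilde{X})(g) = (-1)^{d-1}\chi(\tilde{X}^*)(g)$ gives $\chi(\tilde{X},\Gamma) = (-1)^{d-1}\chi(\tilde{X}^*,\Gamma)$.

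The argument is essentially formal, so I do not expect a real obstacle; the only points needing care are checking that the specialization $u=v=1$ in Theorem~\ref{t:mirror} is legitimate (it is, since the existence of a crepant resolution makes $E_{\st,\Gamma}$ an honest polynomial), and recognizing the role of the hypothesis $C(g)=\langle g\rangle$: it is exactly what collapses the quotient $\tilde{X}^g/C(g)$ to $\tilde{X}^g$, so that $\chi(\tilde{X},\Gamma)$ becomes visible from the class $\chi(\tilde{X})\in R(\Gamma)$ alone. Without it one would instead need the $C(g)$-module structure of $H^*(\tilde{X}^g)$ --- equivalently, the Euler characteristics of the intersections $\tilde{X}^g\cap\tilde{X}^h$ --- which the mirror identity in $R(\Gamma)$ does not control, and this is why the corollary, rather than the full conjecture~\ref{c:Euler}, is all that is obtained.
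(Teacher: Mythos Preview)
Your proof is correct and follows essentially the same approach as the paper: both arguments split the expression \eqref{e:orby} into the identity term and the non-trivial terms, use the hypothesis $C(\gamma)=\langle\gamma\rangle$ to collapse $\tilde{X}^\gamma/C(\gamma)$ to $\tilde{X}^\gamma$, apply the Lefschetz fixed-point formula (Lemma~\ref{l:fixed}) to rewrite $\chi(\tilde{X}^\gamma)$ as $\chi(\tilde{X})(\gamma)$, and invoke the mirror identity in $R(\Gamma)$ (your specialization of Theorem~\ref{t:mirror} at $u=v=1$ is exactly what the paper records as \eqref{e:food}). The only cosmetic difference is that for the $g=1$ term the paper cites \eqref{e:orbimirror} directly, whereas you unwind it via $\langle\chi(\tilde{X}),1\rangle_\Gamma$; these are the same computation.
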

\begin{proof}
By \eqref{e:orbimirror}, $\chi(\tilde{X}/\Gamma)  = (-1)^{d - 1} \chi(\tilde{X}^*/\Gamma)$. 
Hence, by \eqref{e:orby} and our assumption, it will be enough to show that for all non-trivial $\gamma$ in $\Gamma$, 
\[
\chi(\tilde{X}^\gamma) = (-1)^{d - 1} \chi((\tilde{X}^*)^\gamma).
\]
By the  Lefschetz fixed-point formula (Lemma~\ref{l:fixed}), we are reduced to verifying that 
\[
\chi(\tilde{X})(\gamma) = (-1)^{d - 1} \chi(\tilde{X}^* )(\gamma).
\]
The latter equality is a direct consequence of \eqref{e:food}. 
\end{proof}

\section{Centrally symmetric reflexive polytopes}\label{s:central}

In this section, we apply our results to centrally symmetric reflexive polytopes, and establish Conjecture~\ref{c:Fantechi} in this case. 

We continue with the notation of the previous section. That is, the representation $\rho: \Gamma \rightarrow \GL_{d + 1} (\Z)$ is isomorphic to the direct sum of the trivial representation and $\rho': \Gamma \rightarrow \GL(M)$, and 
$C$ is the cone over a reflexive, $\Gamma$-invariant lattice polytope $P$.  Throughout this section, we set $\Gamma = \Z_2 = \langle \epsilon \rangle$ and $\rho'(\epsilon) = -I$. In this case, $P$ is called  
\emph{centrally symmetric}, and $\det(\rho)$ is the trivial representation if and only if $d$ is even.  There exist $\Z_2$-invariant hypersurfaces $X$ and $X^*$ that are non-degenerate with respect to $P$ and $P^*$ respectively \cite[Corollary~7.8, Corollary~7.10, Section~11]{YoEquivariant}.

Our first goal is to calculate $E_{\st, \Z_2}(X)(\epsilon)$ using Corollary~\ref{c:reflexive}. Firstly,  one calculates  that (see Section~11 in \cite{YoEquivariant}), 
\[
\phi_C[t](\epsilon) = (1 + t)^{d}.
\]
Also, by Example~\ref{e:cc},
\[
H(C,t)(\epsilon) = (1 + t)^d.
\]
By Definition~\ref{d:stilde} and the above computations,
\[
\tilde{S}_{\Z_2}(C,t)(\epsilon) = (-1)^{d + 1} G(\hat{C},t)(\epsilon) + (-1)^d \phi_C[t](\epsilon) = (-1)^d t\alpha_d(t),
\] 
where $\alpha_d(t) = \sum_{i = 0}^{d - 1} a_i t^i$ is the unique polynomial 
 of degree $d - 1$ with 
$\alpha_d(t) = t^{d - 1}\alpha_d(t^{-1})$ and $a_i = \binom{d}{i}$ for $i \le \lfloor \frac{d - 1}{2} \rfloor$. 
Hence, by Corollary~\ref{c:reflexive},
\begin{align*}
E_{\st,  \Z_2}(X; u,v)(\epsilon) &= \frac{(-1)^d}{uv} ( \tilde{S}_{\Z_2}(\hat{C}, uv)(\epsilon) - u^{d + 1}   \tilde{S}_{\Z_2}(C, u^{-1}v)(\epsilon) ) \\
&=  \alpha_d(uv) - u^{d - 1} \alpha_d(u^{-1} v).
\end{align*}
Let $\zeta$ denote the non-trivial character of $\Z_2$. Using the latter computation, we may compute the equivariant stringy invariant of $X$ in terms of the usual stringy invariant of $X$ since
\[
E_{\st,  \Z_2}(X) = \frac{E_{\st} (X) + E_{\st,  \Z_2} (X)(\epsilon)  }{2} + 
\frac{E_{\st} (X) - E_{\st,  \Z_2} (X)(\epsilon)  }{2}\zeta.
\]

For the remainder of the section we assume there exist  $\Z_2$-equivariant, crepant, toric resolutions $\tilde{X} \rightarrow X$ and $\tilde{X}^* \rightarrow X^*$. By Proposition~\ref{p:stringy}, 
\[
E_{\Z_2}(\tilde{X}; u,v) = \sum_{p,q} (-1)^{p + q} H^{p,q}(\tilde{X}) u^p v^q =  E_{\st,  \Z_2}(X; u,v) \in R(\Z_2)[u,v]. 
\]
We claim that $\Z_2$ acts freely on $\tilde{X}$ and $\tilde{X}^*$. Indeed, the only fixed cone of a $\Z_2$-invariant fan is the origin, and hence the only fixed points of the ambient toric variety are the $2^d$ fixed points on the torus. Explicitly, if $T \cong (\C^*)^d$, then $\Z_2$ acts sending $(x_1, \ldots, x_d)$ to 
$(x_1^{-1}, \ldots, x_d^{-1})$, and the fixed points are $(\pm 1, \ldots, \pm 1)$. By the Lefschetz fixed point theorem (Lemma~\ref{l:fixed}), the number of fixed points of $\tilde{X}$ equals 
$\chi(\tilde{X}^\epsilon) = \chi(\tilde{X})(\epsilon)$. On the other hand, by the above calculation,
\[
\chi(\tilde{X})(\epsilon) = E_{\Z_2}(\tilde{X}; 1,1)(\epsilon) = E_{\st,  \Z_2}(X; 1,1)(\epsilon)   = \alpha_d(1) - \alpha_d(1) = 0. 
\]
Hence, with the notation of Section~\ref{s:mirror}, we have an equality of graded $\Z_2$-representations 
$H(\tilde{X}, \Z_2) =  H^*(\tilde{X})$. If $d$ is even, then $\det(\rho)$ is trivial, and Theorem~\ref{t:mirror} implies that 
\[
E_{\Z_2}(\tilde{X}; u,v)= (-u)^{d - 1} E_{\Z_2}(\tilde{X}^*; u^{-1},v),
\]
and hence we have isomorphisms of $\Z_2$-representations
\[
H^{p,q}(\tilde{X}) = H^{p,q}(\tilde{X},\Z_2) \cong H^{d - 1 - p,q}(\tilde{X}^*,\Gamma) = H^{p,q}(\tilde{X}^*).
\] 
This verifies that Conjecture~\ref{c:Fantechi} holds when $d$ is even.

Explicitly, it follows that the Hodge numbers of the manifold $\tilde{X}/\Z_2$ are determined by the symmetries $h^{p,q}(\tilde{X}/\Z_2) = h^{q,p}(\tilde{X}/\Z_2) = h^{d - 1 - p,d - 1 - q}(\tilde{X}/\Z_2)$, together with
\[
h^{p,q}(\tilde{X}/ \Z_2) = \left\{ \begin{array}{ll}
\frac{h^{p,q}(\tilde{X})}{2} & \textrm{  if  } p \ne q, p + q \ne d - 1 \textrm{ or }  p = q,  p + q = d - 1 \\
\frac{h^{p,q}(\tilde{X}) + \binom{d}{p} }{2} & \textrm{  if  }  p =  q < \frac{d - 1}{2} \\
\frac{h^{p,q}(\tilde{X}) + (-1)^d \binom{d}{p} }{2}  & \textrm{  if  }  p + q = d - 1, p < q.
\end{array} \right.
\]
Similarly, we compute the Hodge numbers of the manifold $\tilde{X}^*/\Z_2$. Since $\tilde{X}$ and $\tilde{X}^*$ have mirror Hodge diamonds by Batyrev-Borisov duality, we see explicitly that $\tilde{X}/\Z_2$ and 
$\tilde{X}^*/\Z_2$ have mirror Hodge diamonds when $d$ is even. 

\begin{remark}
Since $\Z_2$ acts freely on $\tilde{X}$, the topological Euler characteristic of $\tilde{X}/\Z_2$ is $\chi(\tilde{X}/ \Z_2) = \frac{\chi(\tilde{X})}{2}$.
\end{remark}

\begin{remark}
When $d$ is odd, the canonical divisor $K$ of $\tilde{X}/ \Z_2$ is non-trivial, but $2K$ is trivial.   
\end{remark}

\begin{example}
When $d = 3$, $\tilde{X}$ is a K$3$-surface with Hodge diamond determined by  $h^{2,0}(\tilde{X}) = 1$, $h^{1,0}(\tilde{X}) = 0$ and $h^{1,1}(\tilde{X}) = 20$. The quotient  $\tilde{X}/\Z_2$ is an Enriques surface with Hodge diamond determined by $h^{2,0}(\tilde{X}/\Z_2) = h^{1,0}(\tilde{X}/\Z_2)  =  0$ and $h^{1,1}(\tilde{X}/\Z_2) = 10$. 
\end{example}

\begin{remark}
If $p =  q < \frac{d - 1}{2}$, then 
\[
H^{p,q}(\tilde{X}) = \frac{h^{p,q}(\tilde{X}) + \binom{d}{p} }{2} +  \frac{h^{p,q}(\tilde{X}) - \binom{d}{p} }{2}\zeta \in R(\Z_2). 
\]
If  $p + q = d - 1, p < q$, then 
\[
H^{p,q}(\tilde{X}) = \frac{h^{p,q}(\tilde{X}) + (-1)^d\binom{d}{p} }{2} +  \frac{h^{p,q}(\tilde{X}) - (-1)^d\binom{d}{p} }{2}\zeta \in R(\Z_2). 
\]
We conclude that if $p = q <  \frac{d - 1}{2}$ or $p + q = d - 1, p < q$, but not both, then we have a lower bound 
$h^{p,q}(\tilde{X}) \ge \binom{d}{p}$. Moreover, if $p = q <  \frac{d - 1}{2}$ or $p + q = d - 1, p < q$ and $d$ is even, then  $h^{p,q}(\tilde{X} / \Z_2) \ge \binom{d}{p}$.  
We will see in Example~\ref{e:crosspoly} below that each of these lower bounds can be obtained. 
\end{remark}

\begin{example}\label{e:crosspoly}
Consider the centrally symmetric, reflexive polytope $P = [-1,1]^d$. Then $X$ is a smooth hypersurface of the product $Y$ of $\P^1$ with itself $d$ times, embedded in $\P^{3^d - 1}$ (and hence $X = \tilde{X}$). By the Lefschetz hyperplane theorem, the restriction map $H^i Y \rightarrow H^i X$ is an isomorphism for $i < d - 1$. We deduce that for $p + q < d - 1$,
\[
h^{p,q}(X) = h^{p,q}(X/ \Z_2) = \left\{ \begin{array}{ll}
 &\binom{d}{p}  \; \textrm{  if  } p = q  \\
& \; 0  \; \; \; \textrm{  otherwise.  }  
\end{array} \right.
\]
The polytope $P^*$ is the $d$-dimensional cross-polytope i.e. if $e_1, \ldots, e_d$ is a basis of the lattice, then $P^*$ is the convex hull of $\{ \pm e_i \}_{1 \le i \le d}$. The hypersurface $X^*$ is singular in general, but we can construct a $\Z_2$-equivariant resolution $\tilde{X}^* \rightarrow X^*$. Indeed, $X^*$ is a non-degenerate hypersurface in the toric variety with fan given by the fan over the faces of $P$. One may construct a $\Z_2$-equivariant, regular, unimodular, lattice  triangulation of the boundary of $P$, which induces the corresponding toric, crepant resolution. By Batyrev-Borisov duality, for $p < q$, 
\[
h^{p,q}(\tilde{X}^*) = \left\{ \begin{array}{ll}
 &\binom{d}{p}  \; \textrm{  if  } p + q = d - 1  \\
& \; 0  \; \; \; \textrm{  otherwise.  }  
\end{array} \right.
\]
It follows that for $p < q$,
\[
h^{p,q}(\tilde{X}^*/ \Z_2) = \left\{ \begin{array}{ll}
 &\binom{d}{p}  \; \textrm{  if  } p + q = d - 1 \textrm{ and } $d$ \textrm{ is even }  \\
& \; 0  \; \; \; \textrm{  otherwise.  }  
\end{array} \right.
\]

\end{example}

\begin{example}
If  $P = [-1,1]^4$,  then $X$ is a smooth hypersurface of $\P^1 \times \P^1 \times \P^1 \times \P^1 \subseteq \P^{80}$. We calculate that the $h^*$-polynomial of $P$ is $h^*_P(t) = 1 + 76t + 230t^2 + 76t^3 + t^4$ and the $h^*$-polynomial of $P^*$ is $h^*_{P^*}(t) = (1 + t)^4$ (cf. \cite{BraEhrhart}).  
Moreover, $P$ has $16$ vertices, $32$ edges with $h^*(t) = 1 + t$, $24$ $2$-dimensional faces with $h^*(t) = 1 + 6t + t^2$,  and $8$ facets with $h^*(t) = 1 + 23t + 23t^2 + t^3$. The $h$-polynomial and $g$-polynomial of $C$ 
are equal to $h_C(t) = 1 + 12t + 14t^2 + 12t^3 + t^4$ and 
$g_C(t) = 1 + 11t + 2t^2$ respectively. 
For every non-zero face $F$ of $C$, $g_{F^*}(t) = 1$ and $h^*_{F^*}(t) = 1$, and if $F$ is a proper face, then $\tilde{S}(F^*, t) = 0$. Also, $h_{\hat{C}}(t) = (1 + t)^4$ and $g_{\hat{C}}(t) = 1 + 3t + 2t^2$.
We deduce that $\tilde{S}(C,t) = t + 68t^2 + 68t^3 + t^4$, $\tilde{S}(\hat{C},t) = t + 4t^2 + 4t^3 + t^4$, and the respective Hodge diamonds of $X$ and $\tilde{X}^*$ are equal to  
\begin{center}
\begin{tabular}{ c    c     c   c  c  c  c                 c c  c                               c    c     c   c  c  c  c         }
 &  &  & $1$ & &                                             & &&      &        &  &  & $1$ & &   \\
  &  & $0$ & & $0$ &  &                                & &&           &  & $0$ & & $0$ &  &\\
   &  $0$  &  & $4$ & & $0$ &                    &  &&         &  $0$  &  & $68$ & & $0$ &\\
     $1$  &   & $68$  &  & $68$ &  & $1$      & &&        $1$  &   & $4$  &  & $4$ &  & $1$ \\
      &  $0$  &  & $4$ & & $0$ &              &&   &       &  $0$  &  & $68$ & & $0$ &  \\
        &  & $0$ & & $0$ &  &                         &&   &          &  & $0$ & & $0$ &  &\\
        &  &  & $1$ & &                               &          &   &&            &  &  & $1$ & & \\
\end{tabular}
\end{center}
with topological Euler characteristics $-128$ and $128$ respectively.  The respective Hodge diamonds of the quotients $X/\Z_2$ and $\tilde{X}^*/\Z_2$ are equal to  
\begin{center}
\begin{tabular}{ c    c     c   c  c  c  c                 c c  c                               c    c     c   c  c  c  c         }
 &  &  & $1$ & &                                             & &&      &        &  &  & $1$ & &   \\
  &  & $0$ & & $0$ &  &                                & &&           &  & $0$ & & $0$ &  &\\
   &  $0$  &  & $4$ & & $0$ &                    &  &&         &  $0$  &  & $36$ & & $0$ &\\
     $1$  &   & $36$  &  & $36$ &  & $1$      & &&        $1$  &   & $4$  &  & $4$ &  & $1$ \\
      &  $0$  &  & $4$ & & $0$ &              &&   &       &  $0$  &  & $36$ & & $0$ &  \\
        &  & $0$ & & $0$ &  &                         &&   &          &  & $0$ & & $0$ &  &\\
        &  &  & $1$ & &                               &          &   &&            &  &  & $1$ & & \\
\end{tabular}
\end{center}
with topological Euler characteristics $-64$ and $64$ respectively.  
\end{example}




\section{The quintic threefold}\label{s:quintic}

The goal of this section is to apply our results to Fermat hypersurfaces, and explicitly verify Conjecture~\ref{c:Fantechi} for the quintic threefold. 

We continue with the notation of the previous section, and let $\Gamma$ be a subgroup of $\Sym_{d + 1}$ 
acting on $\R^{d + 1}$ via the standard representation. Let $C \subseteq \R^{d + 1}$ be the first quadrant, and let $P$ be the convex hull of $(d + 1, 0, \ldots, 0), \ldots, 
(0, \ldots, 0, d + 1)$ i.e. the $(d + 1)^{\textrm{st}}$ dilate of the standard simplex. Replace $\Z^{d + 1}$ with the lattice generated by all lattice points in the affine span of $P$, and let $M$ be the translation of $\aff(P) \cap \Z^{d + 1}$ to the origin by the unique interior lattice point $(1,\ldots, 1)$ of $P$. Then $P$ is a $\Sym_{d + 1}$-invariant, reflexive polytope, and the Fermat hypersurface $X = \{ x_0^{d + 1} + \cdots + x_d^{d + 1} = 0 \} \subseteq \P^{d}$ is a smooth, $\Sym_{d + 1}$-invariant, Calabi-Yau hypersurface which is non-degenerate with respect to $P$. Moreover, the induced action of $\Sym_{d + 1}$ on $H^* X$ is explicitly computed in \cite[Example~8.4]{YoRepresentations}. 

The dual polytope $P^*$ is the standard simplex in $\R^{d + 1}$ with lattice $\Z^{d + 1} + \Z(\frac{1}{d + 1}, \ldots, \frac{1}{d + 1})$. After choosing co-ordinates, a $\Sym_{d + 1}$-invariant hypersurface of the torus  has the form $\{ x_1 + \cdots + x_d + \frac{1}{x_1\cdots x_d} +  \psi = 0 \} \subseteq  (\C^*)^d$ for some $\psi \in \C^*$. For a general choice of $\psi$, the corresponding hypersurface is non-degenerate with represent to $P^*$.  The normal fan to $P^*$ equals the fan over the faces of $P$ in $M_\R$, and a $\Sym_{d + 1}$-equivariant, regular, unimodular triangulation of the boundary of $P$ (which one can verify exists) induces a $\Sym_{d + 1}$-equivariant, toric, crepant resolution 
$\tilde{X}^* \rightarrow X^*$. 

By \eqref{e:orbimirror}, if $\Gamma \subseteq \Sym_{d + 1}$ 
is the alternating group $A_{d + 1}$ of even permutations, then the orbifolds $X/A_{d + 1}$ and $\tilde{X}^*/A_{d + 1}$ have mirror Hodge diamonds. 
For the remainder of the section, we will specialize to the case when $d = 4$, and show that the
Hodge diamonds of $H(X,A_{5})$ and $H(\tilde{X}^*, A_{5})$ are mirror, hence verifying Conjecture~\ref{c:Fantechi}  in this case. 

Consider the alternating group $A_5$. It has $60$ elements and $5$ conjugacy classes. Explicitly, the conjugacy class of the identity element consists of one element. The conjugacy class containing $\gamma = (12)(34)$ has $15$ elements and centralizer $C(\gamma) \cong \Z_2 \times \Z_2$ with $C(\gamma)/ \langle \gamma \rangle$ generated by $(13)(24)$.  The conjugacy class containing $\gamma = (123)$ has $20$ elements and $C(\gamma) = \langle \gamma \rangle$. 
There are $2$ conjugacy classes consisting of cycles of order $5$, both with $12$ elements and  $C(\gamma) = \langle \gamma \rangle$, where $\gamma$ is a conjugacy class representative. 
Recall that 
\[
H(X, A_5) = \bigoplus_{\gamma \in A_5} H^*(X^\gamma),  \: \: \: H(\tilde{X}^*, A_5) = \bigoplus_{\gamma \in A_5} H^*((\tilde{X}^*)^\gamma),
\]
with the age grading and $A_5$-action described in Section~\ref{s:mirror}. We will compute the $A_5$-representations $H(X, A_5)$ and $H(\tilde{X}^*, A_5)$  below. 

Let $\mu = 1 + 2 \Ind_{\Z_2}^{A_5} 1 + 2  \Ind_{\Z_3}^{A_5} 1 \in R(A_5)$ be the $101$-dimensional permutation representation corresponding to the action of $A_5$ on the set 
\[
\{ (b_1, \ldots, b_5) \mid 0 \le b_i \le 4, \sum_i b_i = 5 \}.  
\]
It was shown in \cite[Figure~1]{YoRepresentations} that the 
representations of $A_5$ on the cohomology of $X$ and $\tilde{X}^*$ are described by the respective diamonds of representations below

\begin{center}
\begin{tabular}{ c    c     c   c  c  c  c                 c c  c                               c    c     c   c  c  c  c         }
 &  &  & $1$ & &                                             & &&      &        &  &  & $1$ & &   \\
  &  & $0$ & & $0$ &  &                                & &&           &  & $0$ & & $0$ &  &\\
   &  $0$  &  & $1$ & & $0$ &                    &  &&         &  $0$  &  & $\mu$ & & $0$ &\\
     $1$  &   & $\mu$  &  & $\mu$ &  & $1$      & &&        $1$  &   & $1$  &  & $1$ &  & $1$. \\
      &  $0$  &  & $1$ & & $0$ &              &&   &       &  $0$  &  & $\mu$ & & $0$ &  \\
        &  & $0$ & & $0$ &  &                         &&   &          &  & $0$ & & $0$ &  &\\
        &  &  & $1$ & &                               &          &   &&            &  &  & $1$ & & \\
\end{tabular}
 \end{center}

Consider the element $\gamma = (12)(24)$ in $A_5$. The fixed locus $X^\gamma$ consists of the degree $5$ curve $C = X \cap \{ (x: x: y : y : z) \} \subseteq \P^2$ together with $\{ (x: - x: y : - y: 0 ) \} \cong \P^1$ (cf. \cite[Lemma~2.3]{CheRepresentations}). In both cases, $C(\gamma)/\langle \gamma \rangle \cong \Z_2$ acts by exchanging $x$ and $y$. 
Consider the action of $\Z_2$ on $\R^2$ by exchanging co-ordinates, and let $Q$ be the convex hull of the origin, $(5,0)$ and $(0,5)$ in $\R^2$. 
Then $C \subseteq \P^2$ may be viewed as a $\Z_2$-invariant curve which is non-degenerate with respect to $Q$. In particular, by  Corollary~6.8 in \cite{YoRepresentations}, the $\Z_2$-representation
$H^{1,0}(C)$ is the permutation representation associated to the action of $\Z_2$ on the $6$ interior lattice points in $Q$. Similarly, $\Z_2$ acts trivially on $H^*(\P^1)$. 
One verifies that for every fixed point $x \in X^\gamma$, $\gamma$ acts on the tangent space  $T_x X$ with eigenvalues $\{ 1, -1, -1 \}$. In particular,  
the age $a(g,T)$ of both connected components $T$ of $X^\gamma$ equals $1$. We conclude that the
 representation of $A_5$ on  $\bigoplus_{\gamma'} H^*(X^{\gamma'})$, where $\gamma'$ varies over the elements of the conjugacy class of $(12)(34)$, is isomorphic to 

\begin{center}
\begin{tabular}{ c    c     c   c  c  c  c                 c c  c                               c    c     c   c  c  c  c         }
 &  &  & $0$ & &                                               \\
  &  & $0$ & & $0$ &  &                              \\
   &  $0$  &  & $\nu'$ & & $0$ &                  \\
     $0$  &   & $\nu$  &  & $\nu$ &  & $0$ \:, \\
      &  $0$  &  & $\nu'$ & & $0$ &               \\
        &  & $0$ & & $0$ &  &                     \\
        &  &  & $0$ & &                             
\end{tabular}
 \end{center}
where $\nu= 2 \Ind_{\Z_2}^{A_5} 1 + 2  \Ind_{\Z_2 \times \Z_2}^{A_5} 1$ and
$\nu' = 2  \Ind_{\Z_2 \times \Z_2}^{A_5} 1$. 

Consider the element $\gamma = (123)$ in $A_5$. The fixed locus $X^\gamma$ consists of the degree $5$ curve $C = X \cap \{ (x: x: x : y : z) \} \subseteq \P^2$, together with 
the two fixed points $x_1 = (1: e^{2 \pi i /3} : e^{4 \pi i /3}  : 0 : 0)$ and $x_2 = (1: e^{4 \pi i /3} : e^{2 \pi i /3}  : 0 : 0)$.
One verifies that $\gamma$ acts on the tangent space  $T_x X$ as $e^{2 \pi i /3}$ times the identity transformation when $x = x_1$, as  $e^{4 \pi i /3}$ times the identity transformation when $x = x_2$, and acts with eigenvalues $\{ 1, e^{2 \pi i/3}, e^{4 \pi i/3}\}$ when $x \in C$.  We conclude that the
 representation of $A_5$ on  $\bigoplus_{\gamma'} H^*(X^{\gamma'})$, where $\gamma'$ varies over the elements of the conjugacy class of $(123)$, is isomorphic to 

\begin{center}
\begin{tabular}{ c    c     c   c  c  c  c                 c c  c                               c    c     c   c  c  c  c         }
 &  &  & $0$ & &                                               \\
  &  & $0$ & & $0$ &  &                              \\
   &  $0$  &  & $\theta'$ & & $0$ &                  \\
     $0$  &   & $\theta$  &  & $\theta$ &  & $0$ \:, \\
      &  $0$  &  & $\theta'$ & & $0$ &               \\
        &  & $0$ & & $0$ &  &                     \\
        &  &  & $0$ & &                             
\end{tabular}
 \end{center}
where $\theta = 6 \Ind_{\Z_3}^{A_5} 1$ and
$\theta' = 2  \Ind_{\Z_3}^{A_5} 1$.  
 
 Consider the element $\gamma = (12345)$ in $A_5$. One verifies that $\gamma$ has precisely $5$ fixed points in $\P^4$, none of which lie on $X$.  
 \begin{remark}\label{r:nonabelian}
Observe that the stabilizer of each of the $5$ elements in $X \cap \{ (x: x: x : x : y) \}$ is the subgroup $A_4 \subseteq A_5$. By the Luna Slice Theorem \cite[I.2.1]{AudTorus}, the images of these points in $X/A_5$ are singularities of the form $\C^3/A_4$.  In particular, $X/A_5$ has non-abelian singularities. 
\end{remark}
 We conclude that the $A_5$-representation $H(X, A_5)$ is described by the diamond of representations 
 \begin{center}
\begin{tabular}{ c    c     c   c  c  c  c                 c c  c                               c    c     c   c  c  c  c         }
 &  &  & $0$ & &                                               \\
  &  & $0$ & & $0$ &  &                              \\
   &  $0$  &  & $\Phi'$ & & $0$ &                  \\
     $0$  &   & $\Phi$  &  & $\Phi$ &  & $0$ \:, \\
      &  $0$  &  & $\Phi'$ & & $0$ &               \\
        &  & $0$ & & $0$ &  &                     \\
        &  &  & $0$ & &                             
\end{tabular}
 \end{center}
 where $\Phi = 1  + 4\Ind_{\Z_2}^{A_5} 1  + 8 \Ind_{\Z_3}^{A_5} 1 + 2\Ind_{\Z_2 \times \Z_2}^{A_5} 1$ and
$\Phi' = 1 +  2  \Ind_{\Z_3}^{A_5} 1 +  2  \Ind_{\Z_2 \times \Z_2}^{A_5} 1$.  In particular, by taking invariant subspaces of the above representations, we calculate the orbifold Hodge diamond of $X/A_5$
to be
 \begin{center}
\begin{tabular}{ c    c     c   c  c  c  c                 c c  c                               c    c     c   c  c  c  c         }
 &  &  & $1$ & &                                               \\
  &  & $0$ & & $0$ &  &                              \\
   &  $0$  &  & $5$ & & $0$ &                  \\
     $1$  &   & $15$  &  & $15$ &  & $1$. \\
      &  $0$  &  & $5$ & & $0$ &               \\
        &  & $0$ & & $0$ &  &                     \\
        &  &  & $1$ & &                             
\end{tabular}
 \end{center}
We next consider the equivariant toric resolution $\tilde{X}^* \rightarrow X^*$. We may view $P^*$ as the image of the standard $4$-dimensional simplex  in $N = \Z^{5}/(1,\ldots,1)$, with induced action of $\Sym_{5}$. The decomposition of the corresponding projective toric variety induces a decomposition 
$X^* = \cup_{F^*} (X^* \cap T_{F^*})$, where $T_{F^*}  = \Spec \C[N_{F^*}]$ is the torus orbit corresponding to the non-empty face $F^*$ of $P^*$, and $N_{F^*}$ is a translation to the origin of the intersection of $N$ with the affine span of $F^*$, with induced action of the stabilizer $\Gamma_{F^*}$ of 
$F^*$ (see Section~2 in \cite{YoRepresentations} for details). For any element $\gamma \in \Sym_{5}$ and $\gamma$-invariant face $F^*$,  consider the finite abelian group  $N_{F^*}(\gamma) = N_{F^*}/\langle\gamma \cdot e - e \mid e \in N_{F^*}\rangle$. The induced morphism $\Spec \C[N_{F^*}(\gamma)] \hookrightarrow T_{F^*}$ is the inclusion of the $\gamma$-fixed locus  $T_{F^*}^\gamma$ of $T_{F^*}$.

Consider the element $\gamma = (12)(24)$ in $A_5$. We compute $N(\gamma) = \Z^5/( e_0 + \cdots + e_4 = 0, e_0 = e_1, e_2 = e_3) \cong \Z^2 = \Z e_0 + \Z e_2$, with induced action of $C(\gamma)/\langle \gamma \rangle \cong \Z_2$ by exchanging $e_0$ and $e_2$. 
The fixed locus $X^* \cap T^\gamma \subseteq T^\gamma \cong (\C^*)^2$ is a hypersurface which is non-degenerate with respect to its Newton polytope $Q = \conv\{ e_0, e_2, -2e_0 - 2e_2 \}$. The closure of $X^* \cap T^\gamma$ in $\tilde{X}^*$  is a connected component $C$ of $(\tilde{X}^*)^\gamma$. For every $x \in C$, $\gamma$ acts on $T_x \tilde{X}^*$ with eigenvalues $\{ 1, -1 , -1 \}$ and age $a(\gamma, C) = 1$.  
 By  Corollary~6.8 in \cite{YoRepresentations}, the $\Z_2$-representation
$H^{1,0}(C)$ is the permutation representation associated to the action of $\Z_2$ on the $2$ ($\Z_2$-fixed) interior lattice points in $Q$. We remark that the  image of $C \subseteq \tilde{X}^*$ in $X^*$ contains two singular points of $X^*$, and one verifies that the pre-image of each singular point in $\tilde{X}^*$ contains a unique $\gamma$-fixed point. 

Consider the face $F^*$ of $P^*$ given by the convex hull of the images of $e_0, e_1, e_2, e_3$ in $N$, which corresponds to a ray in the normal fan of $P^*$. 
In this case, $N_{F^*} \cong \Z^3$ is generated by the images of $e_0 - e_3$, $e_1 - e_3$ and $e_2 - e_3$ in $N$, and $N_{F^*}(\gamma) = \Z e_0 + \Z e_2/ (2(e_0 + e_2)) \cong \Z \times \Z_2$, with induced action of $C(\gamma)/\langle \gamma \rangle \cong \Z_2$ by exchanging $e_0$ and $e_2$. 
The closure of $X^* \cap \Spec \C[N_{F^*}(\gamma)] \subseteq T_{F^*}$ in $X^*$ is isomorphic to $\P^1 = \{ (x:y) \}$, with induced action of $\Z_2$ exchanging $x$ and $y$. The two points corresponding to $(0:1)$ and $(1:0)$ in $\P^1$ are singular points of $X^*$, lying in the torus orbits $T_{F^*}$ for $F^* = \langle e_0, e_1 \rangle$ and $F^* = \langle e_2, e_3 \rangle$. The corresponding cones in the normal fan of $P^*$ are singular $3$-dimensional cones, each isomorphic to the cone over $5Q$, where $Q$ is the standard $2$-dimensional simplex. The diagram below shows an equivariant, unimodular, regular triangulation of $5Q$, in which $\gamma$ acts by exchanging the horizontal and vertical co-ordinates, and 
the maximal $\gamma$-invariant faces are numbered. 
\begin{center}
\setlength{\unitlength}{0.7cm}
\begin{picture}(11,5.5)



\put(0,2){\circle*{0.1}}
\put(0,3){\circle*{0.1}}
\put(0,4){\circle*{0.1}}
\put(0,5){\circle*{0.1}}
\put(1,3){\circle*{0.1}}
\put(1,4){\circle*{0.1}}
\put(2,0){\circle*{0.1}}

\put(3,0){\circle*{0.1}}
\put(3,1){\circle*{0.1}}

\put(4,0){\circle*{0.1}}
\put(4,1){\circle*{0.1}}
\put(5,0){\circle*{0.1}}

\linethickness{0.075mm}
\put(0,1){\line(0,1){4}}
\put(1,0){\line(1,0){4}}
\put(0,5){\line(1,-1){2}}
\put(3,2){\line(1,-1){2}}

\put(0,1){\circle*{0.1}}

\put(1,0){\circle*{0.1}}

\put(1,2){\circle*{0.1}}

\put(2,1){\circle*{0.1}}

\put(2,3){\circle*{0.1}}
\put(3,2){\circle*{0.1}}

\put(0,2){\line(1,0){1}}
\put(0,3){\line(1,0){2}}
\put(0,4){\line(1,0){1}}
\put(2,1){\line(1,0){2}}

\put(2,0){\line(0,1){1}}
\put(3,0){\line(0,1){2}}
\put(4,0){\line(0,1){1}}
\put(1,2){\line(0,1){2}}

\put(0,3){\line(1,-1){1}}
\put(0,4){\line(1,-1){4}}

\put(2,1){\line(1,-1){1}}
\put(0,2){\line(1,-1){2}}


\color{blue}{
\put(0,0){\line(1,0){1}}
\put(0,1){\line(1,0){2}}
\put(1,2){\line(1,0){2}}

\put(0,0){\line(0,1){1}}
\put(1,0){\line(0,1){2}}
\put(2,1){\line(0,1){2}}

\put(0,1){\line(1,-1){1}}
\put(1,2){\line(1,-1){1}}
\put(2,3){\line(1,-1){1}}

\put(0,0){\circle*{0.1}}

\put(1,1){\circle*{0.1}}

\put(2,2){\circle*{0.1}}

\put(0.15,0.15){$1$}
\put(0.6,0.5){$2$}
\put(1.15,1.15){$3$}
\put(1.6,1.5){$4$}
\put(2.15,2.15){$5$}

}








\end{picture}

\end{center}
In the corresponding resolution of the ambient toric varieties, the orbit $\C^*$ corresponding to $5Q$ is replaced by torus orbits corresponding to the faces of the triangulation which are not contained in the boundary of $5Q$. The action of $\Z_2$ on $\C^* = \Spec \C[x, x^{-1}]$ sends $x$ to $x^{-1}$ and has fixed points $\{ 1, -1 \}$, with $-1$ the fixed point in $X^*$. One calculates that the $\gamma$-fixed locus of the fiber of $-1 \in X^*$ consists of the disjoint union of a point (corresponding to the maximal face labelled $1$ above) and two $\P^1$'s   (such that $\{ 0, \infty \}$ correspond  to the maximal faces labelled $2$ and $3$, and $4$ and $5$ respectively), each with corresponding age $1$. The action of $C(\gamma)/\langle \gamma \rangle \cong \Z_2$ exchanges the fibers over the two singular points. 

We conclude that $(\tilde{X}^*)^\gamma$ consists of the disjoint union of a genus $2$ curve and $5$ $\P^{1}$'s. Moreover, $C(\gamma)/\langle \gamma \rangle \cong \Z_2$ acts by leaving the genus $2$ curve and one of the $\P^1$'s invariant, and exchanging the other four $\P^1$'s in two orbits. 
The
 representation of $A_5$ on  $\bigoplus_{\gamma'} H^*((\tilde{X}^*)^{\gamma'})$, where $\gamma'$ varies over the elements of the conjugacy class of $(12)(34)$, is isomorphic to 

\begin{center}
\begin{tabular}{ c    c     c   c  c  c  c                 c c  c                               c    c     c   c  c  c  c         }
 &  &  & $0$ & &                                               \\
  &  & $0$ & & $0$ &  &                              \\
   &  $0$  &  & $\nu$ & & $0$ &                  \\
     $0$  &   & $\nu'$  &  & $\nu'$ &  & $0$ \:, \\
      &  $0$  &  & $\nu$ & & $0$ &               \\
        &  & $0$ & & $0$ &  &                     \\
        &  &  & $0$ & &                             
\end{tabular}
 \end{center}
where $\nu= 2 \Ind_{\Z_2}^{A_5} 1 + 2  \Ind_{\Z_2 \times \Z_2}^{A_5} 1$ and
$\nu' = 2  \Ind_{\Z_2 \times \Z_2}^{A_5} 1$. 

Consider the element $\gamma = (123)$ in $A_5$.
We compute $N(\gamma) = \Z^5/( e_0 + \cdots + e_4 = 0, e_0 = e_1 = e_2) \cong \Z^2 = \Z e_0 + \Z e_3$.
The fixed locus $X^* \cap T^\gamma \subseteq T^\gamma \cong (\C^*)^2$ is a hypersurface which is non-degenerate with respect to its Newton polytope $Q = \conv\{ e_0, e_3, -3e_0 - e_3 \}$. It follows that the closure of $X^* \cap T^\gamma$ in $\tilde{X}^*$  is a genus $2$ curve $C$. For every $x \in C$, $\gamma$ acts on $T_x \tilde{X}^*$ with eigenvalues $\{ 1, e^{2 \pi i /3 } , e^{4 \pi i /3 } \}$ and age $a(\gamma, C) = 1$.  The  image of $C \subseteq \tilde{X}^*$ in $X^*$ contains one singular point of $X^*$, and the pre-image of the singular point in $\tilde{X}^*$ contains a unique $\gamma$-fixed point.

Consider the face $F^*$ of $P^*$ given by the convex hull of the images of $e_0, e_1, e_2$ in $N$, which corresponds to a $2$-dimensional singular cone in the normal fan of $P^*$. 
In this case, $N_{F^*} \cong \Z^2$ is generated by the images of $v_ 1 = e_0 - e_2$ and $v_2 = e_2 - e_3$ in $N$, and $N_{F^*}(\gamma) = \Z v_1 + \Z v_2/ (v_1 + v_2 = 0, 3v_1 = 0) \cong  \Z_3$. 
The $\gamma$-fixed locus $T_{F^*}^\gamma$ consists of three points, two of which lie in $X^*$ and have ages $1$ and $2$ respectively. One verifies that the $\gamma$-fixed locus of the fiber in $\tilde{X}^*$ of each singular point  consists of the disjoint union of two $\P^1$'s (with age $1$) and a point (with ages $1$ and $2$ respectively).

We conclude that $(\tilde{X}^*)^\gamma$ consists of the disjoint union of a genus $2$ curve and $4$ $\P^{1}$'s and $2$ points. 
The
 representation of $A_5$ on  $\bigoplus_{\gamma'} H^*((\tilde{X}^*)^{\gamma'})$, where $\gamma'$ varies over the elements of the conjugacy class of $(123)$, is isomorphic to 

\begin{center}
\begin{tabular}{ c    c     c   c  c  c  c                 c c  c                               c    c     c   c  c  c  c         }
 &  &  & $0$ & &                                               \\
  &  & $0$ & & $0$ &  &                              \\
   &  $0$  &  & $\theta$ & & $0$ &                  \\
     $0$  &   & $\theta'$  &  & $\theta'$ &  & $0$ \:, \\
      &  $0$  &  & $\theta$ & & $0$ &               \\
        &  & $0$ & & $0$ &  &                     \\
        &  &  & $0$ & &                             
\end{tabular}
 \end{center}
where $\theta= 6 \Ind_{\Z_3}^{A_5} 1$ and
$\theta' = 2  \Ind_{\Z_3}^{A_5} 1$. 

Finally, one verifies that $\gamma = (12345)$ acts freely on $\tilde{X}^*$. 
We conclude that the $A_5$-representation $H(\tilde{X}^*, A_5)$ is described by the diamond of representations 
 \begin{center}
\begin{tabular}{ c    c     c   c  c  c  c                 c c  c                               c    c     c   c  c  c  c         }
 &  &  & $0$ & &                                               \\
  &  & $0$ & & $0$ &  &                              \\
   &  $0$  &  & $\Phi$ & & $0$ &                  \\
     $0$  &   & $\Phi'$  &  & $\Phi'$ &  & $0$ \:, \\
      &  $0$  &  & $\Phi$ & & $0$ &               \\
        &  & $0$ & & $0$ &  &                     \\
        &  &  & $0$ & &                             
\end{tabular}
 \end{center}
 where $\Phi = 1  + 4\Ind_{\Z_2}^{A_5} 1  + 8 \Ind_{\Z_3}^{A_5} 1 + 2\Ind_{\Z_2 \times \Z_2}^{A_5} 1$ and
$\Phi' = 1 +  2  \Ind_{\Z_3}^{A_5} 1 +  2  \Ind_{\Z_2 \times \Z_2}^{A_5} 1$.  This completes the verification of Conjecture~\ref{c:Fantechi}. In particular, by taking invariant subspaces of the above representations, we calculate the orbifold Hodge diamond of $\tilde{X}^*/A_5$
to be
 \begin{center}
\begin{tabular}{ c    c     c   c  c  c  c                 c c  c                               c    c     c   c  c  c  c         }
 &  &  & $1$ & &                                               \\
  &  & $0$ & & $0$ &  &                              \\
   &  $0$  &  & $15$ & & $0$ &                  \\
     $1$  &   & $5$  &  & $5$ &  & $1$. \\
      &  $0$  &  & $15$ & & $0$ &               \\
        &  & $0$ & & $0$ &  &                     \\
        &  &  & $1$ & &                             
\end{tabular}
 \end{center}
 
 In this case, a result of Bridgeland, King and Reid \cite{BKRMcKay} implies there exist canonical crepant resolutions of $X/A_5$ and $\tilde{X}^*/A_5$. We briefly recall their result. Let $G$ be a finite group acting effectively on a $3$-dimensional complex manifold $Y$ and assume that for every $g \in G$ and $g$-fixed point $y \in Y$, $g$ acts on $T_y Y$ with determinant $1$. Then $Y/G$ has Gorenstein singularities.  In \cite{NakHilbert}, Nakamura introduced the $G$-Hilbert scheme $G$-$\Hilb(Y)$ as a proposed candidate for a crepant resolution of $Y/G$. Complex valued points of $G$-$\Hilb(Y)$ parametrize $G$-invariant, $0$-dimensional subschemes $Z$ of $Y$, such that the induced representation of $G$ on $H^0(Z, \mathcal{O}_Z)$ is isomorphic to the regular representation $\C[G]$ of $G$. There is a Hilbert-Chow morphism 
 \[
 \tau: G\textrm{-}\Hilb(Y) \rightarrow Y/G,
 \]
 sending a $G$-invariant scheme $Z$ to the class of its support. 
 
 \begin{theorem}\cite[Theorem~1.2]{BKRMcKay}\label{t:GHilb}
With the notation above, $G$-$\Hilb(Y)$ is a crepant resolution of $Y/G$. 
\end{theorem}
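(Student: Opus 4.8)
The final statement is the theorem of Bridgeland, King and Reid, so I would only indicate how their argument specialises here. Write $M = G\text{-}\Hilb(Y)$ and $X = Y/G$, and set $n = \dim Y = 3$; the Hilbert--Chow morphism $\tau\colon M \to X$ is projective, and since $G$ acts freely on a dense open $U \subseteq Y$ it restricts to an isomorphism over $U/G$, so $\tau$ is birational. The plan is to construct a Fourier--Mukai equivalence between $D^b(M)$ and the bounded derived category $D^b_G(Y)$ of $G$-equivariant coherent sheaves on $Y$, and then read off smoothness of $M$ and crepancy of $\tau$ from the equivalence.

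First I would set up the functor. Let $\mathcal{Z} \subseteq Y \times M$ be the universal $G$-cluster, with projections $p\colon \mathcal{Z} \to Y$ and $q\colon \mathcal{Z} \to M$; regarding $\O_{\mathcal{Z}}$ as a $G$-equivariant sheaf (with trivial action on $M$), define $\Phi = Rp_*\bigl(q^*(-) \otimes \O_{\mathcal{Z}}\bigr)\colon D^b(M) \to D^b_G(Y)$, so that $\Phi(\O_m)$ is the structure sheaf of the cluster parametrised by $m$, whose sections carry the regular representation $\C[G]$. Next I would prove $\Phi$ is fully faithful via the intersection-theorem criterion of Bridgeland--King--Reid (a relative, equivariant strengthening of Bondal--Orlov): it suffices that $\Hom_{D^b_G(Y)}(\Phi\O_{m_1}, \Phi\O_{m_2}[i])$ vanish for $m_1 \ne m_2$ and for $i \notin [0,n]$, and equal $\C$ when $m_1 = m_2$, $i = 0$. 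By a standard base-change spectral-sequence argument, all of this reduces to the single geometric estimate $\dim(M \times_X M) \le n + 1$.

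The hard part will be this dimension estimate, which is the only place the hypotheses $n = 3$ and ``stabilisers act with determinant $1$'' (equivalently, $X$ Gorenstein) are used essentially --- in higher dimensions neither smoothness nor crepancy of $G\text{-}\Hilb$ holds. I would stratify $X$ by conjugacy class of stabiliser; over a point with stabiliser $H$ the map $\tau$ is \'etale-locally the Hilbert--Chow map of $(\C^3/H)$, and its exceptional fibres over a stratum of dimension $k$ have dimension at most $k+1$. Since a nontrivial $H \subseteq \SL(3,\C)$ without pseudo-reflections fixes at most a line, one has $k \le n - 2 = 1$, so every positive-dimensional $\tau$-fibre lies over a $\le 1$-dimensional stratum and has dimension $\le 1$; summing the contributions of the strata yields $\dim(M \times_X M) \le n + 1 = 4$, as required.

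Finally I would upgrade full faithfulness to an equivalence and conclude. Because $\omega_Y \cong \O_Y$ as a $G$-equivariant sheaf (again the determinant-$1$ hypothesis), the Serre functor of $D^b_G(Y)$ relative to $X$ is the shift $[n]$, and a fully faithful exact functor between triangulated categories with Serre functors, one of which is indecomposable, that intertwines them is automatically an equivalence; hence $\Phi\colon D^b(M) \simeq D^b_G(Y)$. An equivalence with $D^b_G(Y)$ for $Y$ smooth forces $D^b(M)$ to have finite homological dimension, so $M$ is smooth of dimension $n = 3$; transporting the relative dualising complex through $\Phi$ gives $\tau^!\O_X \cong \O_M$, i.e. $\tau$ is crepant. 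Combined with the birationality noted at the outset, this shows $G\text{-}\Hilb(Y)$ is a crepant resolution of $Y/G$. The genuine obstacle is concentrated in the dimension estimate; the remaining steps are formal once it is in hand.
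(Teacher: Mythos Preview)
The paper does not prove this statement; Theorem~\ref{t:GHilb} is quoted verbatim from \cite[Theorem~1.2]{BKRMcKay} and used as a black box, so there is nothing in the paper to compare your argument against. Your sketch is an outline of the Bridgeland--King--Reid proof itself, and its overall architecture (Fourier--Mukai functor, reduction of full faithfulness to the bound $\dim(M\times_X M)\le n+1$, upgrade to an equivalence via the triviality of the equivariant canonical bundle, then read off smoothness and crepancy) is correct.

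There is, however, a genuine gap in your treatment of the dimension estimate. You assert that over a stratum of dimension $k$ the fibres of $\tau$ have dimension at most $1$, but this is false already for an isolated quotient singularity: if $G=\Z/2$ acts on $\C^3$ by $-\mathrm{id}$, then $G$-$\Hilb(\C^3)$ is the blow-up of $\C^3/G$ at the origin and the fibre over $0$ is $\P^2$, of dimension $2$. More generally, for any isolated $\C^3/G$ singularity the exceptional fibre of $G$-$\Hilb$ is a (union of) surface(s). So your stratification bound does not give $\dim(M\times_X M)\le 4$. In the actual BKR argument the bound is not obtained by a direct geometric fibre-dimension count; rather, it is extracted from homological information (support loci of the $\mathrm{Ext}$ sheaves of the Fourier--Mukai kernel combined with Serre duality), and the restriction $n\le 3$ enters through a comparison of these support dimensions with $2n$. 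If you want to sketch the proof rather than cite it, that is the step you would need to redo.
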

 
By the theorem above and Remark~\ref{r:sl}, we conclude that  $A_5$-$\Hilb(X)$ and $A_5$-$\Hilb(\tilde{X}^*)$ are crepant resolutions of $X/A_5$ and $\tilde{X}^*/A_5$ respectively. Hence, the above calculations show that $A_5$-$\Hilb(X)$ and $A_5$-$\Hilb(\tilde{X}^*)$ are Calabi-Yau manifolds with respective mirror Hodge diamonds

\begin{center}
\begin{tabular}{ c    c     c   c  c  c  c                 c c  c                               c    c     c   c  c  c  c         }
 &  &  & $1$ & &                                             & &&      &        &  &  & $1$ & &   \\
  &  & $0$ & & $0$ &  &                                & &&           &  & $0$ & & $0$ &  &\\
   &  $0$  &  & $5$ & & $0$ &                    &  &&         &  $0$  &  & $15$ & & $0$ &\\
     $1$  &   & $15$  &  & $15$ &  & $1$      & &&        $1$  &   & $5$  &  & $5$ &  & $1$. \\
      &  $0$  &  & $5$ & & $0$ &              &&   &       &  $0$  &  & $15$ & & $0$ &  \\
        &  & $0$ & & $0$ &  &                         &&   &          &  & $0$ & & $0$ &  &\\
        &  &  & $1$ & &                               &          &   &&            &  &  & $1$ & & \\
\end{tabular}
 \end{center}

 We observe that the calculations above can be used to verify Conjecture~\ref{c:Fantechi} for any subgroup $\Gamma \subseteq A_5$. In particular, $\Gamma$-$\Hilb(X)$ and $\Gamma$-$\Hilb(\tilde{X}^*)$ are Calabi-Yau manifolds with mirror Hodge diamonds. Below we list the respective Hodge diamonds for all non-trivial proper subgroups of $A_5$ (see, for example, \cite{WikiA5}). Note that when $\Gamma = \langle (12345) \rangle$, the action is free and $X/\Gamma$ and $\tilde{X}^*/\Gamma$ are smooth. 

 \begin{center}
 $\Gamma = \langle (12)(34) \rangle \cong \Z_2$ \\
 \vspace{0.2cm}
\begin{tabular}{ c    c     c   c  c  c  c                 c c  c                               c    c     c   c  c  c  c         }
 &  &  & $1$ & &                                             & &&      &        &  &  & $1$ & &   \\
  &  & $0$ & & $0$ &  &                                & &&           &  & $0$ & & $0$ &  &\\
   &  $0$  &  & $3$ & & $0$ &                    &  &&         &  $0$  &  & $59$ & & $0$ &\\
     $1$  &   & $59$  &  & $59$ &  & $1$      & &&        $1$  &   & $3$  &  & $3$ &  & $1$. \\
      &  $0$  &  & $3$ & & $0$ &              &&   &       &  $0$  &  & $59$ & & $0$ &  \\
        &  & $0$ & & $0$ &  &                         &&   &          &  & $0$ & & $0$ &  &\\
        &  &  & $1$ & &                               &          &   &&            &  &  & $1$ & & \\
\end{tabular}
 \end{center}
 
 \begin{center}
 $\Gamma = \langle (12)(34), (13)(24) \rangle \cong \Z_2 \times \Z_2$ \\
 \vspace{0.2cm}
\begin{tabular}{ c    c     c   c  c  c  c                 c c  c                               c    c     c   c  c  c  c         }
 &  &  & $1$ & &                                             & &&      &        &  &  & $1$ & &   \\
  &  & $0$ & & $0$ &  &                                & &&           &  & $0$ & & $0$ &  &\\
   &  $0$  &  & $7$ & & $0$ &                    &  &&         &  $0$  &  & $41$ & & $0$ &\\
     $1$  &   & $41$  &  & $41$ &  & $1$      & &&        $1$  &   & $7$  &  & $7$ &  & $1$. \\
      &  $0$  &  & $7$ & & $0$ &              &&   &       &  $0$  &  & $41$ & & $0$ &  \\
        &  & $0$ & & $0$ &  &                         &&   &          &  & $0$ & & $0$ &  &\\
        &  &  & $1$ & &                               &          &   &&            &  &  & $1$ & & \\
\end{tabular}
 \vspace{0.2cm}
 \end{center} 
 
  \begin{center}
 $\Gamma = \langle (123) \rangle \cong \Z_3$ \\
 \vspace{0.2cm}
\begin{tabular}{ c    c     c   c  c  c  c                 c c  c                               c    c     c   c  c  c  c         }
 &  &  & $1$ & &                                             & &&      &        &  &  & $1$ & &   \\
  &  & $0$ & & $0$ &  &                                & &&           &  & $0$ & & $0$ &  &\\
   &  $0$  &  & $5$ & & $0$ &                    &  &&         &  $0$  &  & $49$ & & $0$ &\\
     $1$  &   & $49$  &  & $49$ &  & $1$      & &&        $1$  &   & $5$  &  & $5$ &  & $1$. \\
      &  $0$  &  & $5$ & & $0$ &              &&   &       &  $0$  &  & $49$ & & $0$ &  \\
        &  & $0$ & & $0$ &  &                         &&   &          &  & $0$ & & $0$ &  &\\
        &  &  & $1$ & &                               &          &   &&            &  &  & $1$ & & \\
\end{tabular}
 \vspace{0.2cm}
 \end{center} 
 
  \begin{center}
 $\Gamma = \langle (12345) \rangle \cong \Z_5$ \\
 \vspace{0.2cm}
\begin{tabular}{ c    c     c   c  c  c  c                 c c  c                               c    c     c   c  c  c  c         }
 &  &  & $1$ & &                                             & &&      &        &  &  & $1$ & &   \\
  &  & $0$ & & $0$ &  &                                & &&           &  & $0$ & & $0$ &  &\\
   &  $0$  &  & $1$ & & $0$ &                    &  &&         &  $0$  &  & $21$ & & $0$ &\\
     $1$  &   & $21$  &  & $21$ &  & $1$      & &&        $1$  &   & $1$  &  & $1$ &  & $1$. \\
      &  $0$  &  & $1$ & & $0$ &              &&   &       &  $0$  &  & $21$ & & $0$ &  \\
        &  & $0$ & & $0$ &  &                         &&   &          &  & $0$ & & $0$ &  &\\
        &  &  & $1$ & &                               &          &   &&            &  &  & $1$ & & \\
\end{tabular}
 \vspace{0.2cm}
 \end{center}

   \begin{center}
 $\Gamma = \langle (12)(34), (123) \rangle = A_4 \subseteq A_5$ \\
 \vspace{0.2cm}
\begin{tabular}{ c    c     c   c  c  c  c                 c c  c                               c    c     c   c  c  c  c         }
 &  &  & $1$ & &                                             & &&      &        &  &  & $1$ & &   \\
  &  & $0$ & & $0$ &  &                                & &&           &  & $0$ & & $0$ &  &\\
   &  $0$  &  & $7$ & & $0$ &                    &  &&         &  $0$  &  & $29$ & & $0$ &\\
     $1$  &   & $29$  &  & $29$ &  & $1$      & &&        $1$  &   & $7$  &  & $7$ &  & $1$. \\
      &  $0$  &  & $7$ & & $0$ &              &&   &       &  $0$  &  & $29$ & & $0$ &  \\
        &  & $0$ & & $0$ &  &                         &&   &          &  & $0$ & & $0$ &  &\\
        &  &  & $1$ & &                               &          &   &&            &  &  & $1$ & & \\
\end{tabular}
 \vspace{0.2cm}
 \end{center} 
 
    \begin{center}
 $\Gamma = \langle (12)(45), (23)(45) \rangle \cong \Sym_3$ \\
 \vspace{0.2cm}
\begin{tabular}{ c    c     c   c  c  c  c                 c c  c                               c    c     c   c  c  c  c         }
 &  &  & $1$ & &                                             & &&      &        &  &  & $1$ & &   \\
  &  & $0$ & & $0$ &  &                                & &&           &  & $0$ & & $0$ &  &\\
   &  $0$  &  & $5$ & & $0$ &                    &  &&         &  $0$  &  & $33$ & & $0$ &\\
     $1$  &   & $33$  &  & $33$ &  & $1$      & &&        $1$  &   & $5$  &  & $5$ &  & $1$. \\
      &  $0$  &  & $5$ & & $0$ &              &&   &       &  $0$  &  & $33$ & & $0$ &  \\
        &  & $0$ & & $0$ &  &                         &&   &          &  & $0$ & & $0$ &  &\\
        &  &  & $1$ & &                               &          &   &&            &  &  & $1$ & & \\
\end{tabular}
 \vspace{0.2cm}
 \end{center} 
 
     \begin{center}
 $\Gamma = \langle (12)(35), (12345) \rangle = D_{5}$ (dihedral group of order $10$) \\
 \vspace{0.2cm}
\begin{tabular}{ c    c     c   c  c  c  c                 c c  c                               c    c     c   c  c  c  c         }
 &  &  & $1$ & &                                             & &&      &        &  &  & $1$ & &   \\
  &  & $0$ & & $0$ &  &                                & &&           &  & $0$ & & $0$ &  &\\
   &  $0$  &  & $3$ & & $0$ &                    &  &&         &  $0$  &  & $19$ & & $0$ &\\
     $1$  &   & $19$  &  & $19$ &  & $1$      & &&        $1$  &   & $3$  &  & $3$ &  & $1$. \\
      &  $0$  &  & $3$ & & $0$ &              &&   &       &  $0$  &  & $19$ & & $0$ &  \\
        &  & $0$ & & $0$ &  &                         &&   &          &  & $0$ & & $0$ &  &\\
        &  &  & $1$ & &                               &          &   &&            &  &  & $1$ & & \\
\end{tabular}
 \vspace{0.2cm}
 \end{center} 
 
 \begin{remark}
One can similarly verify the corresponding statements when $d= 3$. That is,  let $X$ be the cubic surface $\{ x_0^4 + x_1^4 + x_2^4 + x_3^4 = 0 \} \subseteq \P^3$, and let $\tilde{X}^*$ be a $\Sym_4$-equivariant, crepant resolution of its mirror. Then for any subgroup $\Gamma \subseteq A_4$, the orbifolds  $X/\Gamma$ and $\tilde{X}^*/\Gamma$ satisfy Conjecture~\ref{c:Fantechi}, and have mirror orbifold Hodge diamonds (each with $h^{1,0}_{\orb} = 0$,  $h^{2,0}_{\orb} = 1$ and $h^{1,1}_{\orb} = 20$). Moreover, the $\Gamma$-Hilbert schemes $\Gamma$-$\Hilb(X)$ and 
$\Gamma$-$\Hilb(\tilde{X}^*)$ are crepant resolutions of $X/\Gamma$ and $\tilde{X}^*/\Gamma$ respectively \cite[Theorem~1.2]{BKRMcKay}. 
\end{remark}

\bibliographystyle{amsplain}
\bibliography{alan}

\end{document}